\newcommand{\fo}{\mathrm{FO}}
\newcommand{\lfo}{\ensuremath{\mathrm{BndSCL}}}
\newcommand{\ulfo}{\ensuremath{\mathrm{SCL}}}
\newcommand{\substshift}{\ensuremath{L\text{Subst}1}}
\newcommand{\substcopyelim}{\ensuremath{L\text{Subst}2}}
\DeclareMathOperator{\subf}{Sf}
\DeclareMathOperator{\rfor}{Rf}
\DeclareMathOperator{\ran}{\mathrm{ran}}
\DeclareMathOperator{\uso}{\forall \mathrm{SO}}
\DeclareMathOperator{\eso}{\exists \mathrm{SO}}
\newbox\gnBoxA
\newdimen\gnCornerHgt
\newdimen\gnArgHgt
\def\Godelnum #1{%
\setbox\gnBoxA=\hbox{$#1$}%
\gnArgHgt=\ht\gnBoxA%
\ifnum     \gnArgHgt<\gnCornerHgt \gnArgHgt=0pt%
\else \advance \gnArgHgt by -\gnCornerHgt%
\fi \raise\gnArgHgt\hbox{$\ulcorner$} \box\gnBoxA %
\raise\gnArgHgt\hbox{$\urcorner$}}
\author{Reijo Jaakkola}
\author{Antti Kuusisto}
\affil{Tampere University and University of
Helsinki, Finland}
\date{}
\begin{document}

\setlength\abovedisplayskip{3pt}
\setlength\belowdisplayskip{3pt}
\title{First-order logic with self-reference}

\theoremstyle{plain}
\newtheorem{theorem}{Theorem}[section]
\newtheorem{lemma}[theorem]{Lemma}
\newtheorem{corollary}[theorem]{Corollary}
\newtheorem{proposition}[theorem]{Proposition}
\newtheorem{fact}[theorem]{Fact}
\theoremstyle{definition}
\newtheorem{definition}[theorem]{Definition}
\newtheorem{remark}[theorem]{Remark}
\newtheorem{example}[theorem]{Example}

\maketitle

\begin{abstract}
\noindent
   We consider an extension of first-order logic with a recursion operator that corresponds to allowing formulas to refer to themselves. We investigate the obtained language under two different systems of semantics, thereby obtaining two closely related but different logics. We provide a natural deduction system that is complete for validities for both of these logics, and we also investigate a range of related basic decision problems. For example, the validity problems of the two-variable fragments of the logics are shown coNexpTime-complete, which is in stark contrast with the high undecidability of two-variable logic extended with least fixed points. We also argue for the naturalness and benefits of the investigated approach to recursion and self-reference by, for example, relating the new logics to Lindstr\"{o}m's Second Theorem.
\end{abstract}


\section{Introduction}

This paper investigates an extension of 
first-order logic $\mathrm{FO}$ with an 
operator that allows formulas to refer to themselves. 
The idea is simple. We extend the syntax of $\mathrm{FO}$ by 
the following two rules:

\begin{enumerate}
    \item 
    If $\varphi$ is a 
    formula, then so is $L\varphi$. Here $L$ is 
    a \emph{label symbol} intuitively naming 
    the formula $\varphi$.
    \item
    If $L$ is a label symbol, then $C_L$ is an
    atomic formula (a \emph{claim symbol}) 
    intuitively referring to 
    the formula labelled by $L$. 
\end{enumerate}

We interpret formulas via extending the 
standard game-theoretic semantics for $\mathrm{FO}$ by
the rule that if an atom $C_L$ is reached in the 
play of the semantic game, then the players jump 
back to the formula $L\varphi$ and the game 
continues from there. Thereby the symbol $L$
can indeed be seen as a \emph{naming} or \emph{labelling} operator 
that names $\varphi$, while $C_L$ is a \emph{claming}
operator claiming that $\varphi$ holds.  
Other rules are precisely as in
standard first-order logic, making our system a 
conservative extension of $\mathrm{FO}$.

We give two alternative semantics to 
the obtained language, called \emph{bounded} 
and \emph{unbounded} semantics. In the unbounded 
semantics, the players continue until (if ever) an
atomic first-order formula is reached, and the 
play is then won according to the same criteria as in
$\mathrm{FO}$. If the game play continues forever,
neither player wins. 
The bounded semantics is similar, but
there the players must commit to an integer value giving
the number of times formulas can be repeated, i.e., how
many jumps from claim symbols $C_L$ back to 
label symbols $L$ are allowed. This forces all
plays to be of finite duration. The winner is
decided in the same way as in
unbounded semantics and can occur only if an $\mathrm{FO}$-atom is
reached. If the players simply 
run out of time, then neither player wins the play.

Under the unbounded semantics, our logic is a 
fragment of the logic $\mathrm{CL}$, or 
\emph{computation logic}, introduced in \cite{turingcomp}
and discussed further in, e.g., \cite{boundedmucalctwo}, 
\cite{partial}.
In addition to the looping operator 
studied in this paper, $\mathrm{CL}$
extends $\mathrm{FO}$ with the 
capacity to modify models by adding and deleting
domain points as well as tuples of relations. Yet $\mathrm{CL}$ is a conservative extension of $\mathrm{FO}$,
giving the same interpretations to all
first-order operators as $\mathrm{FO}$ via 
game-theoretic semantics. 
$\mathrm{CL}$ captures the class $\mathrm{RE}$
in the sense of
descriptive complexity theory, that is, $\mathrm{CL}$
can define precisely the classes of finite models 
that are recursively enumerable \cite{turingcomp}. 
In fact, more is true. We can associate
Turing machines $\mathrm{TM}$ with formulas $\varphi_{
\mathrm{TM}}$ so that 
\begin{enumerate}
    \item
    $\mathfrak{M}\models\varphi_{\mathrm{TM}}$ iff
    $\mathrm{TM}$ accepts the encoding of $\mathfrak{M}$,
    \item
    $\mathfrak{M}\models\neg\varphi_{\mathrm{TM}}$ iff
    $\mathrm{TM}$ rejects the encoding of $\mathfrak{M}$,
    \item
    $\varphi_{TM}$ is indeterminate on $\mathfrak{M}$
    iff $\mathrm{TM}$ diverges on the encoding of $\mathfrak{M}$.
\end{enumerate}
Note here that the game-theoretic negation is 
strongly constructive: while $\mathfrak{M}\models\varphi$
means that the proponent has a winning strategy in the
game for $\mathfrak{M}$ and $\varphi$, 
negation is 
defined such that $\mathfrak{M}\models\neg\varphi$ if
the opponent has a winning strategy in the game.
While $\mathrm{FO}$ is determined, $\mathrm{CL}$ has 
formulas that are not, so neither player has a winning
strategy. This is necessary for capturing the full 
expressive power of Turing machines in the
way $\mathrm{CL}$ does, creating an exact match also 
between indeterminacy of formulas and diverging computations.
Verifiability of a formula in a model is of
course matched with 
acceptance and falsifiability (in a model) with rejection, i.e., 
halting in a rejecting state. 
Thus there is a full symmetry 
between logic and computation, 
game-theoretic semantics being 
the key for achieving this. In this 
context, $\neg$ is most naturally read to
indicate falsifiability, although on
first-order formulas, the involved mode of
falsifiability collapses to classical
negation. Note also that the self-reference mechanisms have immediate readings in natural language, so the framework produces formulas that have simple natural language counterparts. See \cite{games19} for further discussions on this.

We call the logics studied in the current
paper $\mathrm{SCL}$
and $\mathrm{BndSCL}$, for \emph{static 
computation logic} and \emph{bounded static computation
logic}. The logic $\ulfo$ follows the 
unbounded version of our semantics and is thus a
fragment of $\mathrm{CL}$ also
semantically, because $\mathrm{CL}$ is
defined in \cite{turingcomp} based on the unbounded semantics.
The term \emph{static} here refers to the 
fact that $\ulfo$ and $\lfo$ do not
modify the models under
investigation, unlike $\mathrm{CL}$. We
note that quite 
naturally, we could 
alternatively refer to
$\ulfo$ and $\lfo$, e.g., as
\emph{non-well-founded} $\mathrm{FO}$ under
bounded and unbounded semantics.

The logics $\ulfo$ and $\lfo$ also relate to 
several other formalisms studied in the literature.
The looping mechnism is similar to that of
the modal $\mu$-calculus \cite{bradfield}, which
becomes especially apparent when considering the 
game-theoretic approach to its semantics via
parity games. The parity condition essentially 
allows for the $\mu$-calculus to be
closed under
classical negation. In contrast, the
logics $\ulfo$ and $\lfo$---being based on a
strongly constructive game-theoretic 
negation---allow for indeterminate 
formulas but are based on simple reachability
games that can be won only by
ending up in an $\mathrm{FO}$-atom---as in first-order logic.
Nevertheless, fixed-point logics in 
general bear similarities to $\ulfo$ and $\lfo$.

Perhaps the best known fixed-point logic is
$\mathrm{LFP}$, or \emph{least fixed-point 
logic}, see \cite{moschovakis} for an 
early thorough approach to the formalism.
In $\mathrm{LFP}$, the use of negation is limited to
guarantee monotonicity of the iterated operators. 
As negation can be used 
entirely freely in $\ulfo$ and $\lfo$, they are syntactically perhaps more closely related to 
\emph{partial fixed-point logic} $\mathrm{PFP}$ than  $\mathrm{LFP}$.
In $\mathrm{PFP}$, a non-converging
computation is interpreted as $\bot$, thereby
essentially forcing the involved procedure to converge (see, e.g., \cite{ebbingfmt} for an introduction to $\mathrm{PFP}$).
In contrast, $\ulfo$ and $\lfo$ allow for diverging
formulas and the game-theoretic and coinductive 
approach to their semantics is not
based on fixed points in any direct way.
Much of the naturality of the setting stems from the 
strongly constructive negation, making 
verification of a negated
formula in a model $\mathfrak{M}$
equivalent to falsifying the formula in $\mathfrak{M}$.
This feature is present even in $\mathrm{CL}$, and
also contrasts with intuitionistic logic, as double 
negation cancels in $\mathrm{CL}$ (and thus also in $\ulfo$ and $\lfo$).
Having said all this, it
nevertheless ought to be kept in 
mind that both $\ulfo$ and $\lfo$ are conservative 
extensions of $\mathrm{FO}$ and thus negation 
indeed behaves entirely classically when
restricting to $\mathrm{FO}$-formulas.

The difference between $\lfo$ and $\ulfo$ is 
that in $\lfo$, the players must
commit to a maximum number of
times the self-referential formulas can be repeated when playing the semantic game. 
This idea, which is conceptually related to the difference between 
\emph{for-loops} and \emph{while-loops}, has
been investigated in
different forms in various different studies. The papers 
\cite{atl1c}, \cite{acmtrans2018}, \cite{atl2}, \cite{atl2c}
utilize bounded semantics in \emph{alternating-time
temporal logic} $\mathrm{ATL}$ and its
variants. The results 
concern, e.g.,
identifying a hierarchy of fixed-parameter 
tractable variants of the
extension $\mathrm{ATL}^+$ of $\mathrm{ATL}$.
The papers \cite{time2017} and \cite{tcs2019} 
develop deduction systems and tableaux for $\mathrm{ATL}$ 
under bounded semantics, and the 
articles \cite{boundedmucalcone}, \cite{boundedmucalctwo}
device a bounded game-theoretic semantics for the
modal $\mu$-calculus and show it equivalent to the
standard one. It is also shown
that in the new setting, semantic games of
the $\mu$-calculus
always end after a finite number of rounds, even in
infinite models. The $\mu$-calculus formula size
games of \cite{vilander} are based on
this bounded semantics in an 
essential way. Also concerning bounded semantics, \cite{boundedmucalctwo}
studies the modal fragment of $\mathrm{CL}$, called $\mathrm{MCL}$, and
observes that while it has $\textsc{PTime}$-model
checking and a
nice bounded semantics with short game durations, it can nevertheless
easily express $\textsc{PTime}$-complete
properties such as 
alternating reachability.
Concerning yet further relevant works, we
stress that there are numerous logics that relate to self-reference and 
recursion, too numerous to
detail here. Relating to axiomatizations, the seminal work \cite{walukiewicz} on
the $\mu$-calclulus should be mentioned. Concerning self-reference,
\cite{bolander} gives a general overview on the topic. It is also worth noting that various
directions in infinitary
logic, especially infinitely deep formulas, bear 
technical links to our work.

\subsection{Contributions}

One of the main aims of this paper is to
provide a complete proof system for $\ulfo$
and $\lfo$. Interestingly, it turns out that
both of these logics have the same set of 
validities. Below we provide a natural deduction
system that is
complete for validities of the logics. We also  
show that if $\Sigma$ is a set of first-order formulas, then
$\Sigma\models \varphi$ iff $\Sigma\vdash\varphi$ holds
for both logics.

Furthermore, we investigate the expressive powers and 
computational properties of $\ulfo$ and $\lfo$ and
their fragments. 
We identify several interesting
properties that are straightforward to 
express in $\ulfo$ or $\lfo$ while not 
being expressible in $\mathrm{FO}$. For example, it is
easy to express in $\ulfo$ that a linear order is
well-founded, whence it is easy to define the intended 
model of arithmetic up to isomorphism in that logic.
Concerning computational properties, perhaps 
most notably, we show that the two-variable
fragments of $\ulfo$ and $\lfo$ have
\textsc{coNexpTime}-complete validity problems.
This is in stark contrast with validity for two-variable 
logic with fixed points, which is
highly undecidable, having been shown $\Pi_1^1$-hard in \cite{logicswithtwovariables}. This nicely demonstrates the
possibilities of using recursion in the
way used in this article. In addition to positive 
results, we also show, for example, that the 
satisfiability problem of $\lfo$ is $\Sigma_2^0$-complete.

To better understand the features of $\ulfo$ and $\lfo$, we investigate their model theory. First, we establish that both of these logics have the countable downwards Löwenheim-Skolem property: if $\mathfrak{A}$ is a model of $\varphi$, then $\mathfrak{A}$ has a countable substructure which is also a model of $\varphi$. Secondly, we show that neither of these logics enjoys the Craig interpolation property. Finally, we investigate determinacy of sentences, i.e., the question whether a sentence $\varphi$ has the property that in every model, one of the players has a winning strategy in the semantic
game for $\varphi$. Note that this is
equivalent to asking whether $\varphi\vee\neg\varphi$ is valid. Interestingly, it turns out that $\varphi\vee\neg\varphi$ is valid precisely when $\varphi$ is equivalent to a first-order sentence. We also give an example demonstrating that the above correspondence fails if we restrict our attention to finite models.

We also investigate $\Pi_1^1$-relations as 
well as $\Sigma_{\omega + 1}^0$-relations.
A well-known theorem of Kleene states that over the standard structure of natural numbers $\mathbb{N}$, the class of inductive relations and $\Pi_1^1$-relations (or relations definable in universal second-order logic over $\mathbb{N}$) coincide \cite{Kleene1955OnTF}. In \cite{Harel1984APL}, an alternative characterisation of $\Pi_1^1$-relations was given in terms of programs in the programming language IND. Inspired by these characterisations, we give yet another
characterisation of $\Pi_1^1$-relations by
showing that they also coincide with the class of $\ulfo$-definable relations. We also study the class of $\lfo$-definable relations and prove that they coincide with the class of $\Sigma_{\omega + 1}^0$-relations. To the best of our knowledge, this is the first logical characterisation of $\Sigma_{\omega + 1}^0$-relations. Furthermore, it
sheds light on the expressive power of $\lfo$.

As a final remark, we describe one of
the most important results on $\ulfo$ and
$\lfo$ we have obtained.
Firstly, these logics 
are way more 
expressive than $\mathrm{FO}$.
Secondly, they nevertheless have
recursively enumerable sets of validities,
and, as discussed above, the downward L\"{o}wenheim-Skolem
property. This contrasts with Lindstr\"{o}m's
Second Theorem, which states the the
expressive power of an effectively
regular logic with
recursively enumerable validities and the downward L\"{o}wenheim-Skolem property should not exceed that of $\mathrm{FO}$ (see, e.g., \cite{ebbingmathlog}).
The nice thing is that the only
property that $\ulfo$ and $\lfo$ lack in
being effectively regular is closure under
classical negation, and,
they nevertheless are both
closed under the highly
natural strong negation.\footnote{We note that in this article, we prove this statement explicitly only for the variant of $\mathrm{FO}$ without constant and function symbols. This is to keep the work simple. However, it is trivial to extend our study to involve constants and function symbols.} Indeed, in the case
of $\mathrm{CL}$, the logic \emph{cannot} be closed under classical negation, as
$\mathrm{CL}$ captures $\mathrm{RE}$. Furthermore, we stress once more that the game-theoretic negation is simply the
plain classical negation
when limiting to the first-order fragment.

\section{Preliminaries}

We denote the natural numbers by $\mathbb{N}$,
the integers by $\mathbb{Z}$ and
the positive integers by $\mathbb{N}_+$.
A \textbf{linear order structure} $(A,<^A)$ is a
structure where $<^A$ is a strict linear order over
the domain set $A$.
 A \textbf{discrete order structure} $(A,<^A)$ is a structure where $<^A$ is a strict linear order over the domain $A$ such that the following conditions hold.
\begin{enumerate}
    \item
    The linear order has a minimun element $0^A\in A$.
    \item
    Each element $a\in A$ has a unique
    successor element $b\in A$ in the case $a$ has a
    successor at all. That is, if
    there is some $d\in A$
    such that $a<^A d$, then 
    there exists an element $b\in A$ such
    that $a<^A b$ and for all $c\in A\setminus\{a,b\}$, we
    have $c <^A a$ or $b <^A c$. 
\end{enumerate}

A \textbf{finite sequence} in $V$ is a finite
tuple $(v_1,\dots , v_n)$ of
elements $v_i\in V$. The element $v_n$ is 
the \textbf{last element} of the tuple.
An \textbf{$\omega$-sequence} in $V$ is an
infinite tuple $(v_i)_{i \in \mathbb{N}_+} = 
(v_1,v_2\dots )$ of elements $v_i\in V$. 
Here $\omega$ denotes the first 
infinite ordinal. The 
element $v_1$ is
the \textbf{first element} of 
both $(v_1,\dots , v_n)$ and $(v_1,v_2\dots )$.
If $p = (v_1,\dots , v_n)$
and $q = (u_1,\dots , u_m)$ are finite sequences, 
their \textbf{concatenation} $(v_1,\dots ,v_n,
u_1,\dots , u_m)$ is denoted by $p \cdot q$.
A singleton sequence $(v)$ is identified with $v$.
We sometimes denote tuples with vector 
notation, e.g., $\overline{v}$ denotes a 
tuple of elements $v_i$.

In this paper, a \textbf{directed graph} $(V,E)$ is a
structure where $V$ is any (possibly infinite) set
and $E\subseteq V \times V$. Thus directed 
graphs are allowed to have \textbf{reflexive loops}, i.e.,
the set $E$ may contain
pairs $(v,v)$. A \textbf{dead end} in $(V,E)$ is an 
element $v\in V$ such that there does not exist
exists any $u$ such that $(v,u) \in E$.
A \textbf{walk} in a 
directed graph $(V,E)$ is either a
finite or an $\omega$-sequence in $V$
such that we have $(v_i,v_{i+1}) \in E$
for each pair $(v_i, v_{i+1}) \in E$ of
subsequent elements in the sequence. We note
that in the literature, walks are
often defined as sequences of 
edges, but our definition is more convenient for
this paper. A walk is a \textbf{path} if it
does not repeat any element.
For $v\in V$, the set of finite (nonempty)
walks with the first element $v$ is denoted by $V^*_{walk}(v)$.

A \textbf{game arena} is a tuple $(V_0,V_1,E)$ 
where $V_0$ and $V_1$ are any disjoint sets and
$E \subseteq V\times V$ for $V := V_0 \cup V_1$.
Intuitively, the arena is a platform for a two-player 
game where $V_0$ is a set of positions 
for \textbf{player 0} and $V_1$ for \textbf{player 1}. In
each position $v\in V_0$ (respectively, $v\in V_1$), 
player $0$ (respectively, player 1) chooses a node $u$
such that $(v,u)\in E$ and the
players then continue from the
new position $u$.
We define here that a \textbf{play} on the arena is a
maximal walk in $(V,E)$, where maximality means 
that the walk is either infinite (of 
length $\omega$) or finite with its last 
element being a dead end. A \textbf{generalized winning condition}
over the arena $(V_0,V_1,E) = (V,E)$ is a pair
$(S_0,S_1)$ where $S_0$ and $S_1$ are sets of plays.
The set $S_0$ (respectively, $S_1$) lists the
plays that player 0 (player 1) wins.
Note that it is possible that neither of
the players---or even both players---win a play.

A \textbf{game} is a triple that 
specifies a game arena, a \textbf{beginning position} 
(which is a node $v\in V$)
and a generalized winning condition $(S_0,S_1)$ consisting of 
plays with the first position $v$. In a game with first position $v$, a \textbf{strategy} of player 0 (respectively, player 1) is a
function $f:U \rightarrow V$, 
where $U$ is the set of finite walks with the last element in $V_0$ (respectively, in $V_1$) and the
first element $v$. A strategy $f$ is
\textbf{followed} in a play $p$ if every prefix $q$ of $p$
with $q \in \mathit{dom}(f)$ has the property
that $f(q \cdot f(q))$ is, likewise, a prefix of $p$. We may also talk about following a strategy in a prefix of a play; the meaning of this is defined in the obvious way.
A strategy of player 0 (respectively, player 1) is a \textbf{winning strategy} if every play where $f$ is followed
belongs to $S_0$ (respectively, $S_1$).
Letting $S$ denote the set of all
possible plays of a game, a strategy of player 0 (respectively, player 1) is a \textbf{non-losing strategy} if every play 
where $f$ is followed
belongs to $S\setminus S_1$ (respectively, $S\setminus S_0$).
A strategy $f$ is \textbf{positional} if it depends only on
the last position of its
inputs, i.e., $f(q \cdot v) = f(q' \cdot v)$ for all 
prefixes of plays $q \cdot v$ and $q' \cdot v$ in $\mathit{dom}(f)$.
Note that we can identify such a
strategy $f$ of player $i\in \{0,1\}$ by the function $g : V_i \rightarrow V$ such that $g(v) = f(q\cdot v)$
for all $(q \cdot v) \in \mathit{dom}(f)$.
A game is \textbf{determined} if precisely one player has a winning strategy in it. It is \textbf{positionally determined} if precisely one
player has a positional winning strategy in it. Note that positional determinacy implies determinacy.

A \textbf{reachability game} for player $i\in \{0,1\}$ is a game where the winning condition $S_i$ of player $i$ contains precisely the plays $(v_1,\dots , v_n)$ where $v_n$ is a 
dead end belonging to the opponent, i.e., $v_n\in V_{j}$ for $j\in \{0,1\}\setminus \{i\}$. The complement of $S_i$ defines a \textbf{safety game} for the opponent $j$, that is, if the set of plays $S_i$ defines a reachability game for player $i$, then the
complement set of plays defines a safety game for the opponent of $i$. The complement set $S_j$ contains precisely those finite plays that end in a dead end $v_n\in V_i$ for $i$ and all
infinite plays.
Reachability games (and thus safety games) are sometimes 
represented by structures $(V,E,v_1,V_0,V_1)$ (or by
close variants of this representation) in a natural way such that $V = V_0\cup V_1$, $E\subseteq V\times V$ and $v_1$ is the beginning position. In this representation, there is no need to encode all the
plays leading to a win of player $i$, as
obviously only the final position of each finite play matters.
The following result is well known and follows directly from, e.g., \cite{gamesgr}. It states that in any reachability or safety game,
precisely one of the players has a winning strategy, and 
that strategy can be assumed positional.

\begin{theorem}\label{reachabilitygametheorem}
    Reachability and safety games are positionally determined (even on infinite arenas).
\end{theorem}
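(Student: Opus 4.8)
The plan is to prove positional determinacy of reachability games (safety being the dual) by the standard attractor-set construction, taking care that it works on arbitrary, possibly infinite, arenas. Fix a reachability game for player $i$ with arena $(V_0,V_1,E)$ and beginning position $v_1$. Let $D_j$ be the set of dead ends in $V_j$ where $j \neq i$; these are exactly the positions from which player $i$ has already won. I would define the \emph{attractor} $\mathrm{Attr}_i \subseteq V$ as the least set containing $D_j$ and closed under the rules: (a) if $v \in V_i$ and $(v,u)\in E$ for some $u \in \mathrm{Attr}_i$, then $v \in \mathrm{Attr}_i$; (b) if $v \in V_j$, $v$ is not a dead end, and $(v,u) \in E$ implies $u \in \mathrm{Attr}_i$ for \emph{all} successors $u$, then $v \in \mathrm{Attr}_i$. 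Concretely $\mathrm{Attr}_i = \bigcup_{\alpha} \mathrm{Attr}_i^\alpha$ over ordinals $\alpha$, where $\mathrm{Attr}_i^0 = D_j$, successor stages add all positions satisfying (a) or (b) relative to the previous stage, and limit stages take unions; the construction stabilises at some ordinal since $V$ is a set.

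\textbf{Winning strategy for player $i$ on $\mathrm{Attr}_i$.} Assign to each $v \in \mathrm{Attr}_i$ its \emph{rank}, the least $\alpha$ with $v \in \mathrm{Attr}_i^\alpha$. For $v \in V_i \cap \mathrm{Attr}_i$ of rank $\alpha > 0$, by rule (a) there is a successor $u$ of strictly smaller rank; let the positional strategy $g_i(v)$ pick such a $u$. I would then argue that any play from a position in $\mathrm{Attr}_i$ in which player $i$ follows $g_i$ stays in $\mathrm{Attr}_i$ and has strictly decreasing rank at player-$i$ moves, and at player-$j$ moves the rank is non-increasing (by rule (b), every successor of a non-dead-end $V_j$-position in $\mathrm{Attr}_i$ is again in $\mathrm{Attr}_i$, of rank $\le$ that of the current position; a $V_j$-dead-end in $\mathrm{Attr}_i$ must be in $D_j$, ending the play in a win for $i$). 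Since the ordinal ranks cannot decrease infinitely often, the play must reach a position of rank $0$, i.e.\ a dead end in $D_j$, so player $i$ wins. Hence if $v_1 \in \mathrm{Attr}_i$, player $i$ has a positional winning strategy.

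\textbf{Winning strategy for player $j$ off $\mathrm{Attr}_i$.} Let $W := V \setminus \mathrm{Attr}_i$. By minimality of the attractor, every $v \in W \cap V_j$ that is not a dead end has \emph{some} successor in $W$ (otherwise rule (b) would have put $v$ in the attractor), and moreover no $v \in W \cap V_j$ is a dead end of the kind that wins for $i$ — indeed the $V_j$-dead-ends all lie in $D_j \subseteq \mathrm{Attr}_i$, so any dead end inside $W$ must belong to $V_i$. Also every $v \in W \cap V_i$ has \emph{all} successors in $W$ (if some successor were in the attractor, rule (a) would put $v$ there). Define the positional strategy $g_j$ on $W \cap V_j$ to pick a successor in $W$. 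Then any play from $v_1 \in W$ following $g_j$ stays in $W$ forever; such a play is either infinite or ends at a dead end in $W \cap V_i$, and in both cases it is \emph{not} in $S_i$, so by the definition of the safety game it lies in $S_j$ and player $j$ wins. Combining the two cases, exactly one player has a (positional) winning strategy depending on whether $v_1 \in \mathrm{Attr}_i$, and the two cases are mutually exclusive, which gives determinacy; this argument transfers verbatim to the dual safety game.

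\textbf{Main obstacle.} The only real subtlety is handling infinite arenas correctly: the attractor must be built by transfinite (ordinal-indexed) iteration rather than a finite or $\omega$-indexed one, since in an infinite arena rule (b) can require passing to limit stages and beyond, and one must check the iteration stabilises (it does, as $V$ is a set so the increasing chain of subsets cannot be a proper class) and that rank-decrease-to-$0$ still forces termination of plays (it does, because a strictly decreasing sequence of ordinals is finite). Everything else is a routine verification, and since the statement is explicitly attributed to \cite{gamesgr}, it would also be legitimate to simply cite that source; I would nonetheless include the attractor argument for completeness, as the infinite-arena case is exactly what is needed later in the paper.
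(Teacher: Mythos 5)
Your attractor argument is correct and is the standard self-contained proof of this fact; the paper itself does not prove the theorem at all, but simply notes that it is well known and cites the literature (\cite{gamesgr}), so you have supplied a proof where the authors chose to supply a reference. Your transfinite construction, the rank-decreasing positional strategy on the attractor, and the trapping strategy on the complement are exactly what is needed, and you correctly isolate the one point that matters for this paper, namely that the iteration must be ordinal-indexed on infinite arenas and that it stabilises because $V$ is a set. One small imprecision: in the winning-strategy-on-the-attractor step you claim the rank is merely \emph{non-increasing} at player-$j$ moves; as literally stated this would not rule out an infinite play whose rank is eventually constant through a tail of player-$j$ moves, so the appeal to well-foundedness of the ordinals would not close the argument. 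The fix is already implicit in your own rule (b): a non-dead-end $V_j$-position of rank $\beta+1$ has \emph{all} successors in $\mathrm{Attr}_i^\beta$, hence of rank strictly less than $\beta+1$ (and ranks are never limit ordinals), so the rank strictly decreases at every move inside the attractor and every such play terminates in $D_j$ after finitely many steps. With that one-line correction the proof is complete.
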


Leaving games behind for now, we denote models by $\mathfrak{A},\mathfrak{B}$,
and so on. The domain of a model is denoted by the corresponding Roman capital letter, so
for example $A$
denotes the domain of $\mathfrak{A}$.
An \textbf{assignment} for a model $\mathfrak{A}$
is a function $s:V \rightarrow A$ where $V$ is
some (often finite) set of variable symbols. Note that also $\varnothing$ is an assignment (for empty $V$). An assignment 
mapping into a set $A$ is called an $A$-assignment. An assingment that is otherwise as $s$ but 
sends $x$ to $a$ is denoted by $s[a/x]$.
A model $\mathfrak{A}$ (respectively, and assignment $s$) is \textbf{$\varphi$-suitable} if the
vocabulary of $\mathfrak{A}$ contains the
vocabulary of $\varphi$ (respectively, 
the domain of $s$ contains the 
free variables of $\varphi$). When the
specification of $\varphi$ is
sufficiently clear for and 
from the context under investigation, we may simply call $\mathfrak{A}$
and $s$ \textbf{suitable}.

In this article, the language of \textbf{first-order logic} $\mathrm{FO}$ includes
equality and $\bot$ as primitives and 
contains the Boolean operators $\neg$, $\wedge$, $\vee$ and
the quantifiers $\exists$ and $\forall$. We may use $\top$, $\rightarrow$ and $\leftrightarrow$ as abbreviations in the
usual way. We 
limit to purely relational
vocabularies for the sake of
simplicity and brevity.\footnote{Indeed, this limitation could 
easily be lifted.} Atomic formulas
belonging to $\mathrm{FO}$ are
called \textbf{$\mathrm{FO}$-atoms}, or \textbf{first-order atoms}. This is to distinguish them from \emph{claim symbols}, to be formally introduced later on. 
\textbf{Universal
second-order logic} $\forall\mathrm{SO}$ is the
fragment of second-order logic with formulas of the
form $\forall X_1\dots \forall X_n\, \psi$ where 
$X_1,\dots , X_n$ are second-order relation
variables and $\psi$ is a formula of $\mathrm{FO}$.

\begin{definition}
    Let $\varphi$ be a formula of $\fo$, $\mathfrak{A}$ a suitable model and $r$ a suitable assignment. We define the \textbf{evaluation game} $\mathcal{G}(\mathfrak{A},r,\varphi)$ as follows. The game has two players, \textbf{Abelard} and \textbf{Eloise}. The positions of the game are tuples $(\psi,s,\#)$, where $s$ is a $\psi$-suitable $A$-assignment and $\# \in \{+,-\}$.
    The game begins from the initial position $(\varphi,r,+)$ and it is then played according to the following rules.
    
    \begin{itemize}
\item In a position $(\alpha, s, +)$, where $\alpha$ is an $\fo$-atom, the play of the game ends and Eloise wins if $\mathfrak{A},s\models \alpha$. Otherwise Abelard wins.
\item In a position $(\alpha, s, -)$, where $\alpha$ is an $\fo$-atom, the play of the game ends and Abelard wins if $\mathfrak{A},s\models \alpha$. Otherwise Eloise wins.
%
%
%
%
%
%
        \item In a position $(\neg \psi, s, +)$, the game continues from the position $(\psi,s,-)$. Symmetrically, in a position $(\neg \psi, s, -)$, the game continues from the position $(\psi,s,+)$.
        \item In a position $(\psi \land \theta, s, +)$, Abelard chooses whether the game continues from the position $(\psi,s,+)$ or $(\theta,s,+)$.
        \item In a position $(\psi \wedge \theta, s, -)$, Eloise chooses whether the game continues from the position $(\psi,s,-)$ or $(\theta,s,-)$.
                \item In a position $(\psi \vee \theta, s, +)$, Eloise chooses whether the game continues from the position $(\psi,s,+)$ or $(\theta,s,+)$.
        \item In a position $(\psi \vee \theta, s, -)$, Abelard chooses whether the game continues from the position $(\psi,s,-)$ or $(\theta,s,-)$.
                \item In a position $(\forall x \psi, s, +)$, Abelard chooses some element $a\in A$ and the game continues from the position $(\psi, s[a/x],+)$.
        \item In a position $(\forall x \psi, s, -)$, Eloise chooses some element $a\in A$ and the game continues from the position $(\psi, s[a/x], -)$.
        \item In a position $(\exists x \psi, s, +)$, Eloise chooses some element $a\in A$ and the game continues from the position $(\psi, s[a/x],+)$.
        \item In a position $(\exists x \psi, s, -)$, Abelard chooses some element $a\in A$ and the game continues from the position $(\psi, s[a/x], -)$.
        %
        %
        %
        %
        %
        %
        %
         %
         %
         %
    \end{itemize}
    
If $r$ is the empty assignment $\varnothing$ (and hence $\varphi$ is a sentence), we may write $\mathcal{G}(\mathfrak{A},\varphi)$ instead of $\mathcal{G}(\mathfrak{A},\varnothing,\varphi)$.
\end{definition}

\begin{definition}
    Let $\varphi$ be an $\fo$-formula, $\mathfrak{A}$ a suitable model and $s$ a suitable assignment. We define that $\varphi$ is \textbf{true} (or \textbf{verifiable}) in $\mathfrak{A}$ under $s$, denoted by $\mathfrak{A},s \models \varphi$,
    iff Eloise has a winning strategy in the game $\mathcal{G}(\mathfrak{A},s,\varphi)$. If $\varphi$ is a sentence, we may write $\mathfrak{A}\models \varphi$ if $\mathfrak{A},\varnothing\models\varphi$, where $\varnothing$ is the empty assignment. We then simply say that $\varphi$ is true (or verifiable) in $\mathfrak{A}$.
\end{definition}

The above specifies the standard \textbf{game-theoretic semantics} for $\mathrm{FO}$.
It is well known and easy to see that the above definition via evalulation games agrees with the standard Tarski semantics for $\mathrm{FO}$, i.e., $\varphi$ is true in $\mathfrak{A}$
under $s$ according to the game-theoretic semantics iff the same holds in the sense of Tarski semantics.


We then extend the syntax of $\mathrm{FO}$.
Define the set $\mathit{LBS} := \{ L_i\, |\, i\in\mathbb{N}\}$ of
\textbf{label symbols}, and based on this, define the set $\mathit{RFS} := 
\{ C_{L_i}\, |\, L_i\in \mathit{LBS}\}$ of
\textbf{reference symbols}, also called
\textbf{claim symbols}.
The syntax of the logics $\ulfo$
and $\lfo$ is obtained by extending the
formula construction rules of $\fo$ by 
the following rules.
\begin{itemize}
\item
Each claim symbol $C_L\in \mathit{RFS}$ is an atomic formula.
    \item 
If $\varphi$ is a
formula and $L\in \mathit{LBS}$,
then $L\, \varphi$ is a formula.
\end{itemize}

Reference symbols can also be called \textbf{non-$\mathrm{FO}$-atoms} or \textbf{looping atoms},
while the remaining atomic formulas are 
\textbf{$\mathrm{FO}$-atoms}. Now,
consider a formula $\varphi$ and an
occurrence $C_L$ of a
reference symbol in $\varphi$. The \textbf{reference formula} of $C_L$, denoted $\rfor(C_L)$,
 is the subformula occurrence $L \psi$ of $\varphi$ such
 that there is a directed path 
 from $L\psi$ to $C_L$ in the syntax tree of $\varphi$, and 
 $L$ does not occur strictly between $L\psi$ and $C_L$ on
 that path. Note that 
the reference formula of $C_L$ is unique if it
exists at all. When we 
talk about reference 
formulas, we mean
reference formula occurrences. A claim
symbol occurrence $C_L$ is in the \textbf{strict
scope} of a label symbol occurrence $L$
when $C_L$ can be reached in the 
syntax tree from the node
with $L$ via a
directed path that does not contain 
further occurrences of $L$. For example, in
$LLC_L$, the atom $C_L$ is in the strict
scope of the second but not the first 
occurrence of $L$. However, the atom is in the scope of both occurrences. A looping atom 
occurrence $C_L$ is $\textbf{free}$ in a
formula if it is not in the 
scope (equivalently, strict scope) of any occurrence of $L$. A 
label symbol occurrence $L$ is \textbf{dummy} if
there are no corresponding looping atoms $C_L$ in
the strict scope of $L$. A formula $\varphi$ is 
\textbf{regular} if the following conditions hold.
\begin{enumerate}
\item
No label symbol $L$ occurs 
more than once in it.
\item
If an atom $C_L$ occurs free in
the formula, then the corresponding
label symbol $L$ does not occur 
anywhere in the formula.
\end{enumerate}

The set of \textbf{subformulas} of a formula $\varphi$ is
denoted by $\subf(\varphi)$. As usual in game-theoretic semantics, subformulas mean
subformula occurrences, so for example $P(x)\vee P(x)$ has three subformulas, the disjunction itself and the left and the right occurrences of $P(x)$.
The set $\subf_L(\varphi)$ of
\textbf{$L$-subformulas} (or 
\textbf{label subformulas}) of $\varphi$ is
the set of subformulas of
type $L'\psi$ (where $L'$ is any 
label symbol) in $\varphi$. This includes $\varphi$ itself if $\varphi$ is of type $L'\varphi'$.
We say that $\varphi$ is in \textbf{weak negation normal form} if the only negated subformulas of $\varphi$ are atomic formulas. We say that $\varphi$ is in \textbf{strong negation normal form} if the only negated subformulas of $\varphi$ are atomic $\fo$-formulas.

The set of free variables of a
formula $\varphi$ of $\ulfo$ of $\lfo$ is
defined inductively in the same way as for $\mathrm{FO}$-formulas, with 
the following two additional rules.

\begin{itemize}
\item
The set of free variables of a claim symbol $C$ is $\varnothing$.
    \item 
    The free variables of $L\psi$ is the
    the same as that of $\psi$.
\end{itemize}

Analogously to the case for $\mathrm{FO}$, a
model $\mathfrak{A}$ and assignment $s$ are 
called suitable for $\varphi$ if $s$ interprets
the free variables of $\varphi$ in $A$ 
while $\mathfrak{A}$ interprets the relation 
symbols in $\varphi$.

\begin{definition}
The semantics of $\ulfo$ is given via a 
game that extends the game for $\mathrm{FO}$ by
the following rules.

\begin{itemize}
\item In a position $(C_L, s, \#)$, 
where $\#\in\{-,+\}$, the game continues
from the position $(L\psi, s, \#)$ where $L\psi$ is
the reference formula of $C_L$. In the case there exists no
such reference formula, the play of the game ends and
neither of the players win the play.
\item
In a position $(L \psi, s, \#)$, where $\#\in\{+,-\}$, the game simply continues from the position $(\psi, s, \#)$.
\end{itemize}
The game for $\mathfrak{A}$, $\varphi$
and $s$ is denoted by $\mathcal{G}_\infty(
\mathfrak{A},s,\varphi)$. If $s$ is the empty assignment $\varnothing$ (and hence $\varphi$ is a sentence), we may write $\mathcal{G}_\infty(\mathfrak{A},\varphi)$ instead of $\mathcal{G}_\infty(\mathfrak{A},\varnothing,\varphi)$.
\end{definition}

Note that infinite plays are won by neither player. Winning occurs only if an $\mathrm{FO}$-atom is reached, exactly as in first-order logic.
The game  $\mathcal{G}_\infty(\mathfrak{A},s,\varphi)$ is
clearly a reachability game for Eloise, and thereby, by Theorem \ref{reachabilitygametheorem}, Eloise has a positional winning strategy if and only if she has a general one. We note that this holds despite the fact that the
underlying models are not required to be finite. Furthermore, the same claims hold for Abelard as well.

The semantics of $\ulfo$ is formally defined as follows.

\begin{definition}
    Let $\varphi$ be a formula of $\ulfo$, $\mathfrak{A}$ a suitable model and $s$ a suitable assignment. We define that $\varphi$ is \textbf{true} (or \textbf{verifiable}) in $\mathfrak{A}$ under $s$, denoted $\mathfrak{A},s \models \varphi$,
    iff Eloise has a winning strategy in the game $\mathcal{G}_\infty(\mathfrak{A},s,\varphi)$. If $\varphi$ is a sentence, we may write $\mathfrak{A}\models \varphi$ if $\mathfrak{A},\varnothing\models\varphi$. We then say that $\varphi$ is true (or verifiable) in $\mathfrak{A}$.
\end{definition}

We also define two notions of
equivalence for $\ulfo$

\begin{definition}\label{ulfoequivalencedefinition} 
Let $\varphi$ and $\psi$ be a formulas of $\ulfo$.
The formulas are \textbf{weakly
equivalent} if the equivalence
$$\mathfrak{A},s\models \varphi\ \Leftrightarrow\ 
\mathfrak{A},s\models\psi$$
holds for all $\mathfrak{A}$ and $s$ that 
are suitable with respect to both $\varphi$ and $\psi$.
The formulas $\varphi$ and $\psi$ are \textbf{strongly equivalent} if they are weakly equivalent and 
the equivalence

\begin{align*}
    &\text{Abelard has a 
winning 
strategy in }\mathcal{G}_{\infty}(\mathfrak{A},s,\varphi)\\
\Leftrightarrow & \\
&\text{ Abelard has a winning
strategy in }\mathcal{G}_{\infty}(\mathfrak{A},s,\psi)
\end{align*}

\medskip

\medskip

\medskip

\noindent
holds for all $\mathfrak{A}$ and $s$ that 
are suitable with respect to $\varphi$ and $\psi$.
\end{definition}

It is easy to see that an alternative
way to formulate strong
equivalence of $\varphi$ and $\psi$ is to require
that for any suitable $\mathfrak{A}$ and $s$, 
precisely one of the following three conditions hold.

\begin{enumerate}
    \item 
    Eloise has a winning 
    strategy in both games $\mathcal{G}_{\infty}(\mathfrak{A},s,\varphi)$
    and $\mathcal{G}_{\infty}(\mathfrak{A},s,\psi)$.
    \item
    Abelard has a winning 
    strategy in both  $\mathcal{G}_{\infty}(\mathfrak{A},s,\varphi)$
    and $\mathcal{G}_{\infty}(\mathfrak{A},s,\psi)$.
    \item
    Neither of the players has a winning
    strategy in $\mathcal{G}_{\infty}(\mathfrak{A},s,\varphi)$,
    and the same holds also for the game
    $\mathcal{G}_{\infty}(\mathfrak{A},s,\psi)$.
\end{enumerate}

To define the semantics of $\lfo$, we next
define two related games, the second one
extending the first one. The intuitive idea is simply that in the beginning of each play, the players commit to some maximum duration of the play.

\begin{definition}
    Let $\varphi$ be a formula of $\lfo$, and suppose $\mathfrak{A}$ is a suitable model and $r$ a suitable assignment. Let $n \in \mathbb{N}$. We define the \textbf{$n$-bounded evaluation game} $\mathcal{G}_n(\mathfrak{A},r,\varphi)$ as follows. The game has two players, Abelard and Eloise. The positions of the game are tuples $(\psi,s,\#,m)$ where $s$ is a $\psi$-suitable $A$-assignment, $\# \in \{+,-\}$ and $m\in \mathbb{N}$ is a \textbf{clock value} (or \textbf{iteration index}) which will, informally speaking, tell how many times the game play can still jump from a looping atom to a label symbol. The game begins from the initial position $(\varphi,r,+,n)$. The game is then played according to rules that contain---in addition to rules which are analogous to the rules used in the evaluation game for $\mathrm{FO}$---the following rules.
    
    \begin{itemize}
        \item In a position $(C_L,s,\#,n)$ we have two principal cases. If $n > 0$, then the game continues from the position $(\rfor(C_L),s,\#,n-1)$. If $n = 0$, then the play of the game ends and neither player wins the play.
        \item In a position $(L\psi,s,\#,n)$, the game simply moves to the position $(\psi,s,\#,n)$.
    \end{itemize}
We then extend this game and thereby define the \textbf{bounded evaluation game} $\mathcal{G}_\omega(\mathfrak{A},r,\varphi)$ as follows.
        The game starts by Abelard picking a natural number $n' \in \mathbb{N}$. After this, Eloise picks a natural number $n \geq n'$. Then the game $\mathcal{G}_n(\mathfrak{A},r,\varphi)$ is played. Eloise wins $\mathcal{G}_\omega(\mathfrak{A},r,\varphi)$ if she wins the game $\mathcal{G}_n(\mathfrak{A},r,\varphi)$, and similarly, Abelard wins if he wins $\mathcal{G}_n(\mathfrak{A},r,\varphi)$. 
        
        If $r$ is the empty assignment $\varnothing$ (and hence $\varphi$ is a sentence), we may write $\mathcal{G}_n(\mathfrak{A},\varphi)$ and $\mathcal{G}_\omega(\mathfrak{A},\varphi)$ instead of $\mathcal{G}_n(\mathfrak{A},\varnothing,\varphi)$ and $\mathcal{G}_\omega(\mathfrak{A},\varnothing,\varphi)$, respectively.
\end{definition}

We note that, concerning the results below, it 
would make no difference if Eloise 
instead of Abelard picked 
her natural number first, and Abelard would 
then pick some greater number. Or the players
could pick their numbers independently, and 
then the larger one of these would be chosen.
However, we shall formally follow the
convention that Abelard picks first.

Both $\mathcal{G}_n(\mathfrak{A},r,\varphi)$
and $\mathcal{G}_{\omega}(\mathfrak{A},r,\varphi)$ are
reachability games for Eloise, and thus, by Theorem \ref{reachabilitygametheorem}, Eloise has a positional winning strategy if and only if she has a general one. The same claims hold for Abelard as well.
The semantics of $\lfo$ is defined as follows.

\begin{definition}
    We define that $\varphi$ is \textbf{true} (or \textbf{boundedly verifiable}) in $\mathfrak{A}$ under $r$, denoted  $\mathfrak{A},r \models_{\omega} \varphi$, iff Eloise has a winning strategy in the game $\mathcal{G}_\omega(\mathfrak{A},r,\varphi)$.
\end{definition}

\begin{definition}
    The notions of \textbf{weak equivalence}
    and \textbf{strong equivalence} are defined for $\lfo$ precisely as for $\ulfo$ in
    Definition \ref{ulfoequivalencedefinition}, 
    but with respect to the bounded evaluation game this time.  
\end{definition}

Somewhat informally, when it is clear from the context that we are considering $\lfo$, we may use the turnstile $\models$ 
instead of $\models_{\omega}$. Furthermore, when $\varphi$ is a
sentence, we may drop $r$ and simply
write $\mathfrak{A}\models \varphi$.

Let $L\psi$ be a subformula of $\varphi$. Now, consider
renaming this $L$ and the corresponding atoms $C_L$ in $L\psi$ in the
strict scope of this particular occurrence of $L$.
Suppose the occurrence of $L$ and the corresponding atoms in its strict scope end up being renamed as $L'$ and $C_{L'}$. This
renaming is \textbf{safe} if $L\psi$ does not
contain free occurrences of $C_{L'}$ and (2) if $\psi$ already contains symbols $L'$, then none of the occurrences $C_L$ to be renamed are in the scope of such occurrences of $L'$.
A \textbf{regularisation} or a formula $\varphi$ is a strongly equivalent (with respect to both $\ulfo$ and $\lfo$) formula obtained from $\varphi$ by safe renamings. This leaves free occurrences of label
symbols as they are.

We already saw that when considering winning strategies for 
Eloise in semantic games, it does not matter whether we limit
attention to positional or general ones, because having a general winning strategy implies
having a positional one (and of course vice versa).
The following theorem generalizes this observation.

\begin{proposition}\label{positionalityall}
    Let $\mathcal{G}$ denote any of
    the semantic games
    $\mathcal{G}_{\infty}(\mathfrak{A},r,\varphi)$,
    $\mathcal{G}_{\omega}(\mathfrak{A},r,\varphi)$,
    $\mathcal{G}_n(\mathfrak{A},r,\varphi)$. Then the following conditions hold. 
    \begin{enumerate}
        \item
        Eloise (respectively, Abelard) has a positional
        winning strategy in $\mathcal{G}$ iff (s)he
        has a general one.
        \item
        Eloise (respectively, Abelard) has a positional
        non-losing strategy in $\mathcal{G}$ iff (s)he
        has a general one.
    \end{enumerate}
\end{proposition}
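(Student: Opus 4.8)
The plan is to reduce all three semantic games to reachability or safety games and then invoke Theorem~\ref{reachabilitygametheorem}. The key observation is that each of $\mathcal{G}_\infty(\mathfrak{A},r,\varphi)$, $\mathcal{G}_n(\mathfrak{A},r,\varphi)$ and $\mathcal{G}_\omega(\mathfrak{A},r,\varphi)$ is, up to relabelling of which player ``owns'' a terminal position, exactly a reachability game for Eloise (equivalently, a safety game for Abelard) in the sense defined earlier. For part~(1), this is already essentially remarked in the text: the arena has vertex set the set of reachable positions (pairs $(\psi,s,\#)$, resp.\ triples $(\psi,s,\#,m)$, plus the extra initial ``number-picking'' positions in the $\omega$-case), the owner of a position is whichever player moves there, and the winning plays for Eloise are precisely the finite maximal walks ending in a dead end owned by Abelard; every infinite play, and every finite play ending in an $\fo$-atom that Eloise loses, goes to Abelard's complement set or to neither player --- but crucially the set of \emph{Eloise-winning} plays is a reachability condition and the set of \emph{Abelard-winning} plays is the corresponding safety condition. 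So Theorem~\ref{reachabilitygametheorem} applies directly to give positional determinacy, hence in particular the positional/general equivalence for winning strategies, for both players simultaneously.

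The only subtlety is that a semantic game is not \emph{literally} a reachability game for a single player $i$ in the narrow sense of the definition given (where the terminal positions are split purely by ownership), because a play ending in an $\fo$-atom $(\alpha,s,+)$ is won by Eloise if $\mathfrak{A},s\models\alpha$ and by Abelard otherwise, regardless of who ``owns'' that dead end. The standard fix, which I would spell out, is to modify the arena: replace each terminal position $(\alpha,s,\#)$ by a dead end owned by Abelard if the play there is won by Eloise, and by a dead end owned by Eloise if it is won by Abelard. This transformed arena has exactly the same plays and the same winners, and now it genuinely is a reachability game for Eloise and a safety game for Abelard in the formal sense of the definition. Theorem~\ref{reachabilitygametheorem} then yields a positional winning strategy for the unique winner; and since a positional strategy on the transformed arena restricts to a positional strategy on the original arena (the transformation does not touch non-terminal positions), we get part~(1).

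For part~(2), I would argue that in any reachability/safety game a non-losing strategy for the player who in fact \emph{has} a winning strategy can be taken to be that winning strategy (winning implies non-losing), so it is positional by part~(1); and for the player who does \emph{not} have a winning strategy, a non-losing strategy is the same thing as a winning strategy for the \emph{opponent's} complementary game being absent --- more precisely, in a determined game ``$X$ has a non-losing strategy'' is equivalent to ``$Y$ does not have a winning strategy'', and since these games are determined (part~(1)), the player without a winning strategy has \emph{no} non-losing strategy at all, so the equivalence ``positional iff general'' holds vacuously for that player. Hence in all cases the non-losing version reduces to the winning version. The one place to be slightly careful is the definition of non-losing: a non-losing strategy for Eloise is one all of whose plays lie in $S\setminus S_1$, i.e.\ are \emph{not} Abelard-wins; in a safety game for Abelard (reachability for Eloise) the plays not won by Abelard are precisely the plays won by Eloise, so ``non-losing for Eloise'' $=$ ``winning for Eloise'' on these arenas, and symmetrically for Abelard. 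This collapse is what makes part~(2) immediate once part~(1) is in hand.

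The main obstacle is therefore purely bookkeeping: carefully setting up the arena (including the finite prefix of number-picking moves in $\mathcal{G}_\omega$, and the bounded clock component in $\mathcal{G}_n$) and verifying that the terminal-position relabelling faithfully preserves plays and winners, so that the hypotheses of Theorem~\ref{reachabilitygametheorem} are literally met. Once that translation is made precise, everything follows from the cited theorem with no further combinatorial work; there is no genuine difficulty in the infinite-arena case since Theorem~\ref{reachabilitygametheorem} is already stated for infinite arenas.
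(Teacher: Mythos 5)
Your part~(1) is essentially the paper's argument: recast each semantic game as a reachability game for the relevant player (relabelling terminal positions by who wins there) and invoke Theorem~\ref{reachabilitygametheorem}; that part is fine.

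Part~(2), however, contains a genuine error. You claim that in these games ``non-losing for Eloise'' coincides with ``winning for Eloise'' and that, by determinacy, a player without a winning strategy has no non-losing strategy at all, so the non-losing case reduces to part~(1). Both claims are false for the semantic games. The winning sets $S_0$ and $S_1$ are \emph{not} complementary: infinite plays, plays where the clock reaches $0$ at a claim symbol, and plays reaching a free $C_L$ are won by neither player. Hence ``not won by Abelard'' is strictly weaker than ``won by Eloise'', and the semantic game itself is not determined --- this is the central feature of the whole framework. Concretely, in $\mathcal{G}_\infty(\mathfrak{A},LC_L)$ neither player has a winning strategy, yet both trivially have non-losing strategies, so your ``vacuous'' case is not vacuous. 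What part~(1) gives you is determinacy of the \emph{auxiliary} reachability game (Eloise wins, or Abelard can prevent her from winning), not determinacy of the semantic game. The correct route --- and the one the paper takes --- is to note that a non-losing strategy for Eloise is exactly a winning strategy in the safety game whose winning set is the complement of \emph{Abelard's} reachability condition $S_1$, and then apply Theorem~\ref{reachabilitygametheorem} to that reachability/safety pair directly; symmetrically for Abelard. You already have all the ingredients for this in your own setup, so the repair is short, but as written part~(2) does not go through.
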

\begin{proof}
    $\mathcal{G}$ is clearly a reachability  game 
    for both players, and non-losing in $\mathcal{G}$ is
    equivalent to winning the corresponding safety game.
    Thus the claims of the proposition follow immediately 
    from Theorem \ref{reachabilitygametheorem}.
\end{proof}

Due to this proposition, below we will 
almost exclusively consider positional
strategies only, conceived as functions from game
positions to related choices in the game.

When comparing two logics $L_1$ and $L_2$, we 
write $L_1 \leq L_2$ if for every formula $\varphi_1$ of $L_1$, there exists a formula $\varphi_2$ of $L_2$
such that for each suitable model $\mathfrak{A}$
and assignment $s$, the
formula $\varphi_1$ is true in $\mathfrak{A}$ under $s$ iff $\varphi_2$ is true in $\mathfrak{A}$ under $s$. The strict ordering $<$ is defined from $\leq$ in the natural way.

\textbf{Logical consequence} is 
defined in the usual way for both $\lfo$
and $\ulfo$. That is, for a set of
formulas $\Sigma$, we write $\Sigma\models \varphi$
iff for any suitable $\mathfrak{A}$ (i.e., 
any $\mathfrak{A}$ interpreting
the vocabulary in $\Sigma$ and $\varphi$) and
any suitable $s$ (i.e., an assignment 
interpreting the free variables in $\Sigma$ 
and $\varphi$), it holds
that if $\mathfrak{A},s\models\psi$ for all $\psi\in \Sigma$, 
then $\mathfrak{A},s\models \varphi$. A formula $\varphi$ is
said to be $\textbf{valid}$ if $\varnothing\models \varphi$.
We usually drop $\varnothing$ and write $\models\varphi$.
Concerning deduction systems, we
also define the notation $\Sigma\vdash\psi$ in
the usual way. That is, given a 
deduction system $S$, we write $\Sigma\vdash\varphi$ to
mean that $\varphi$ can be inferred in $S$ from the 
premises taken
from $\Sigma$. We write $\vdash\varphi$ to 
denote that $\varnothing\vdash\varphi$. 
The deduction system $S$ is \textbf{complete for validities} if
the implication 
$$\models\varphi
\Rightarrow\ \vdash\varphi$$
holds for
all formulas of the logic investigated. The system $S$ is
\textbf{strongly complete} if
$$\Sigma \models\varphi
\Rightarrow\ \Sigma\vdash\varphi$$
holds for all formulas $\varphi$ and 
any formula set $\Sigma$ of the logic studied.
The system $S$ is \textbf{complete for 
first-order premise sets} if the above 
implication holds in restriction to those 
cases where $\Sigma$ must be a set of
first-order formulas. Note that $\varphi$
does not need to be first-order.
The system $S$ is \textbf{sound} if
$$\Sigma\vdash \varphi\ \Rightarrow\ \Sigma\models\varphi$$
holds for all formulas $\varphi$ and formula sets $\Sigma$ of
the logic investigated.

\section{Properties of $\lfo$}

In this section we prove some basic properties related to $\lfo$.
Now, the \textbf{diameter} of a directed
graph $G = (V,E)$ is 
\[diam(G) = \sup \{d(v,u) \mid v, u \in V\},\]
if the supremum exists, and otherwise it is $\infty$. Here $d(u,v)$ denotes the \textbf{directed distance} between $u$ and $v$ which is formally defined as follows. We let $d(w,w) = 0$, and
for all $w'\in V\setminus \{w\}$, we define $d(w,w')$ to be the smallest number $k+1\in \mathbb{N}_+$
such that we have $d(w,w'') = k$ for some $w''\in V$ such that $Ew''w'$. If no such number $k+1$ exists, then $d(w,w') = \infty$. In this paper, a \textbf{cycle} in a directed graph is a 
finite sequence $(u_1,\dots ,u_k)$ of nodes
such that the following conditions hold.
\begin{enumerate}
\item
 $k>1$
 \item
 $Eu_iu_{i+1}$ for all $i<k$ 
    \item 
    $u_1 = u_k$
\item
$u_i \not= u_j$ for
all distinct indices $i,j\in \{1,\dots , k-1\}$
\end{enumerate}

The following proposition lists examples of properties definable in $\lfo$ which are not
expressible in $\mathrm{FO}$. Showing that these properties are not definable in $\mathrm{FO}$ is an
easy exercise in using Ehrenreucht-Fra\"{i}ss\'{e} games.

\begin{proposition}\label{prop:lfo-expressive-power-examples}
    The following classes are definable in $\lfo$.
    \begin{enumerate}
        \item The class of directed graphs that contain a cycle.
        \item The class of directed graphs with a finite diameter.
    \end{enumerate}
\end{proposition}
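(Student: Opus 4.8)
The plan is to handle both items with a single device, a \textbf{reachability gadget}: an $\lfo$-formula in which, inside the $n$-bounded game $\mathcal{G}_n$, Eloise can steer a play from a start vertex to a target vertex along a directed walk, each edge of the walk costing exactly one jump through a claim symbol and hence one decrement of the clock. The one genuinely $\lfo$-specific point is that the language has no term substitution, so ``move the current vertex one edge forward and recurse'' has to be simulated; this is done with the idiom $\exists u\,(u = y \wedge C_L)$, which lets Eloise overwrite the scratch variable $u$ by the just-chosen successor $y$ and then jump, via $C_L$, to the reference formula $L\psi$. This works precisely because of the clause ``in a position $(C_L,s,\#)$ the game continues from $(L\psi,s,\#)$'': the assignment $s$ is carried over verbatim, so whatever was written into $u$ before the jump persists, and $L\psi$ re-reads the updated $u$. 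Concretely, with a single label $L$ put
\[
\mathrm{Reach}(u,v) \;:=\; L\bigl(\, u = v \,\vee\, \exists y\,(\, Euy \,\wedge\, (\, y = v \,\vee\, \exists u\,(\, u = y \wedge C_L\,)\,)\,)\,\bigr),
\]
a formula with free variables $u,v$. Tracing the rules, in any play following a reasonable Eloise strategy the value of $u$ runs through a walk $s(u)=w_0,w_1,w_2,\dots$ (she is forced to pick genuine successors, since Abelard may always test the conjunct $Euy$), she wins as soon as the current value of $u$ equals $s(v)$, and with clock budget $n$ she can inspect walks of length at most $n+1$ (the exact constant is immaterial); every other play reaches a dead end (she loses) or runs the clock out (nobody wins).

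For item (1), a directed graph contains a cycle iff some vertex lies on a closed walk of positive length. Using the same style, with a fresh variable $z$ as the bookmarked vertex and $x$ as the scratch variable, the sentence
\[
\varphi_{\mathrm{cyc}} \;:=\; \exists z\,\exists x\,\Bigl(\, x = z \,\wedge\, L\bigl(\, \exists y\,(\, Exy \,\wedge\, (\, y = z \,\vee\, \exists x\,(\, x = y \wedge C_L\,)\,)\,)\,\bigr)\,\Bigr)
\]
does the job. After the initialisation $x = z$, the value of $x$ traces a walk $z = w_0, w_1, w_2, \dots$ beginning with a genuine edge, and Eloise wins exactly when some $w_i$ equals $z$. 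If $G$ has a cycle of length $\ell$ through a vertex $a$, then Eloise answers Abelard's number $n'$ by $n := \max(n',\ell)$, picks $z := a$ and follows the cycle; she returns to $z$ after fewer than $\ell$ traversals of $C_L$, which fits the clock, so she wins $\mathcal{G}_n(G,\varphi_{\mathrm{cyc}})$ and hence $\mathcal{G}_\omega$. If $G$ has no cycle, then for every choice of $z$ the walk never returns to $z$, so Eloise never reaches a winning $\mathrm{FO}$-atom, every play is a dead end or a timeout, and she has no winning strategy in $\mathcal{G}_\omega$. Thus $G \models \varphi_{\mathrm{cyc}}$ iff $G$ has a cycle.

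For item (2), $G$ has finite diameter iff there is a single $k$ with $d(u,v)\le k$ for all $u,v$. Put
\[
\varphi_{\mathrm{diam}} \;:=\; \forall u\,\forall v\;\mathrm{Reach}(u,v).
\]
The decisive feature of $\mathcal{G}_\omega$ is that Eloise fixes the clock $n$ \emph{after} Abelard's $n'$ but \emph{before} Abelard picks $u,v$ at the outer quantifiers, so her $n$ acts as a uniform distance bound; note also that the inner $\exists u$ shadows the outer $\forall u$, which is exactly what propagates the walk on re-entry. If the diameter equals $k$, Eloise plays $n := \max(n',k)$; then for any $u,v$ Abelard chooses, $d(u,v)\le k\le n$, a shortest walk from $u$ to $v$ fits the clock, and she wins. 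If $G$ has infinite diameter, then either some pair has $d(u,v)=\infty$ --- Abelard picks it and Eloise can never reach $v$ --- or all distances are finite but unbounded; in the latter case, whatever $n\ge n'$ Eloise picks, Abelard (who sees the clock value $n$ in the position at the $\forall u$-node) picks $u,v$ with $d(u,v)>n+1$, so the clock stops Eloise before the current vertex can become $v$. In either case Eloise loses $\mathcal{G}_n(G,\varphi_{\mathrm{diam}})$ for every admissible $n$, hence has no winning strategy in $\mathcal{G}_\omega$; so $G \models \varphi_{\mathrm{diam}}$ iff $G$ has finite diameter.

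The main obstacle is bookkeeping rather than ideas: one must verify carefully that the clock budget matches walk length, so that ``no short enough walk exists'' genuinely denies Eloise a win, and that the ``neither player wins'' outcomes of the bounded game do no harm --- in particular that enlarging the clock never hurts Eloise, which holds because her winning plays terminate in a true $\mathrm{FO}$-atom strictly before the clock could expire. The substitution-free encoding of ``take an edge'' via $\exists u\,(u = y \wedge C_L)$, combined with keeping the target in a variable that is never overwritten, is the only point that uses the looping mechanism in an essential way; everything else is a routine induction on the evaluation game, together with the already-noted fact that positional strategies suffice (Proposition~\ref{positionalityall}).
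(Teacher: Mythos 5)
Your proposal is correct and uses essentially the same construction as the paper: both encode ``advance the current vertex and recurse'' via the idiom $\exists x\,(x = y \wedge C_L)$, with a bookmarked start/target variable that is never overwritten, and both exploit the fact that the clock value is fixed before the outer universal quantifiers to turn the bound into a uniform diameter bound. Your formulas differ from the paper's only cosmetically (variable names and whether the arrival test sits inside or outside the labelled subformula), and your write-up of the game analysis is in fact more detailed than the paper's, which leaves the verification as ``easy to check.''
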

\begin{proof}
    To prove the first claim, consider the sentence 
    $$\varphi_{cycle} := \exists x \exists y\bigl(x = y 
    \wedge L(Eyx\vee \exists z(Eyz \wedge \exists y(y = z\wedge C_L))\bigr)$$
    %
    %
    %
    %
    %
    %
    where the purpose of the innermost quantifier $\exists y$ is to redefine $y$ to point at the same element as $z$. Intuitively, this enables us to ``move $y$'' along the nodes of the candidate directed cycle.
    Now, it is easy to check that Eloise has a winning strategy in $\mathcal{G}_\omega(G,\varphi_{cycle})$ iff $G$ contains a cycle.

    To prove the second claim,
    consider the following sentence.
    \[\varphi_{diam} := \forall x \forall y (x=y\ \lor\ L\bigl(Exy \lor \exists z (Exz \land \exists x (x = z \land C_L))\bigr)).\]
    At first sight it might seem that $\varphi_{diam}$
    expresses that there is a directed path between any two vertices in the underlying graph and possibly no finite limit on the lengths of such paths, but since the players are required to declare initial clock values in the beginning of the semantic game for $\lfo$, the formula is actually saying that there is some $n\in \mathbb{N}$ so that between any two vertices, there exists a path of length at most $n$. In other words, it expresses the fact that the diameter of the underlying graph is finite.
\end{proof}

    We remark that since $\lfo$ can expresses the fact that the diameter of the underlying graph is finite, it follows easily that $\lfo$ does not have compactness theorem; otherwise one could use a standard compactness argument to construct a directed graph that is not connected (and hence does not have a finite diameter) but is nevertheless a model of $\varphi_{diam}$.

    The following lemmas are straightforward. 

\begin{lemma}\label{monotonicity}
    Let $\varphi$ be a formula of $\lfo$, 
    and let $n_1,n_2 \in \mathbb{N}$ such that $n_1 < n_2$.
    If Eloise has a winning strategy in  $\mathcal{G}_{n_1}(\mathfrak{A},s,\varphi)$, then she has one also in $\mathcal{G}_{n_2}(\mathfrak{A},s,\varphi)$.
\end{lemma}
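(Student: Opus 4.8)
The plan is to prove Lemma~\ref{monotonicity} by showing that any winning strategy for Eloise in the lower-clock game can be ``padded'' into a winning strategy for the higher-clock game, the point being that a larger clock budget can only help. Concretely, I would first observe that the only positions whose behaviour depends on the clock value are those of the form $(C_L, s, \#, m)$: when $m > 0$ the game proceeds to $(\rfor(C_L), s, \#, m-1)$, and when $m = 0$ the play ends with neither player winning. All other moves (the $\mathrm{FO}$-rules and the $(L\psi,s,\#,m)\mapsto(\psi,s,\#,m)$ rule) leave the clock untouched. So the clock component, as a play unfolds from the initial position, only ever decreases, and it decreases by exactly one at each looping atom.

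The key step is to set up a correspondence between plays of $\mathcal{G}_{n_1}(\mathfrak{A},s,\varphi)$ and plays of $\mathcal{G}_{n_2}(\mathfrak{A},s,\varphi)$ that agree on everything except the clock. Given a (positional, by Proposition~\ref{positionalityall}) winning strategy $\sigma$ for Eloise in $\mathcal{G}_{n_1}(\mathfrak{A},s,\varphi)$ --- which we may regard as a function on positions of the form $(\psi,s',\#,m)$ where Eloise moves --- I would define a strategy $\sigma'$ in $\mathcal{G}_{n_2}(\mathfrak{A},s,\varphi)$ by $\sigma'(\psi,s',\#,m) := $ the same first three components that $\sigma$ would pick at $(\psi,s',\#, m - (n_2 - n_1))$, i.e.\ we translate the clock down by the constant offset $d := n_2 - n_1$ before consulting $\sigma$, and copy the non-clock part of the answer while keeping the actual clock value of the current game. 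One then checks by induction along any play $p'$ of $\mathcal{G}_{n_2}$ in which Eloise follows $\sigma'$ that the ``shifted'' sequence $p$, obtained by replacing each clock value $m$ occurring in $p'$ by $m - d$, is a legal play of $\mathcal{G}_{n_1}$ in which Eloise follows $\sigma$ --- this uses that the $C_L$-rule subtracts one from the clock in both games simultaneously, so the offset $d$ is preserved, and that legality of the $C_L$-move ($m>0$ versus $m=0$) in the $n_2$-game follows from legality in the $n_1$-game since $m > m - d \geq 0$ when $m - d > 0$; the converse direction (that $p'$ does not get stuck at a $C_L$ where $p$ would not) is what actually needs $n_1 < n_2$, but in fact we only need that $p'$ continues at least as long as $p$, which is immediate. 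Since $\sigma$ is winning, the play $p$ ends at an $\mathrm{FO}$-atom position $(\alpha, s'', +)$ with $\mathfrak{A}, s'' \models \alpha$ (or a $-$ position with the atom false), and $p'$ ends at the corresponding position with the same $\alpha$ and $s''$, so Eloise wins $p'$ as well. Hence $\sigma'$ is winning in $\mathcal{G}_{n_2}(\mathfrak{A},s,\varphi)$.

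An alternative, arguably cleaner, route avoids strategies entirely: take the positional winning strategy $\sigma$ for Eloise in $\mathcal{G}_{n_1}(\mathfrak{A},s,\varphi)$ and simply lift it to the $n_2$-game by ignoring the clock component of positions (it is positional on the $\psi,\#$ part), then argue that every $\sigma$-conforming play of $\mathcal{G}_{n_2}$ projects to a $\sigma$-conforming play of $\mathcal{G}_{n_1}$ once we drop the clock, and that a play of $\mathcal{G}_{n_2}$ can only be ``longer'' in the sense of reaching the clock bound later, never sooner. The main obstacle --- and it is a mild one --- is purely bookkeeping: making the induction on plays precise requires being careful that the clock decreases in lockstep in the two games and that an Eloise-winning (hence finite, $\mathrm{FO}$-atom-terminating) play in the small game is mirrored by a play in the big game that also terminates at that same $\mathrm{FO}$-atom rather than running past it; since reaching an $\mathrm{FO}$-atom ends the play regardless of the clock, this is automatic. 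There is no genuine difficulty here, which is consistent with the paper's remark that ``the following lemmas are straightforward.''
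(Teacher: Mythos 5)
Your proposal is correct and takes the same route as the paper, which simply states that the strategy for $\mathcal{G}_{n_1}$ can be simulated in $\mathcal{G}_{n_2}$; you have merely spelled out the clock-offset bookkeeping that the paper leaves implicit. The key point you correctly identify is that a winning strategy in the $n_1$-game never reaches a position $(C_L,s,\#,0)$, so the mirrored play in the $n_2$-game terminates at the same $\mathrm{FO}$-atom well within the larger clock budget.
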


\begin{proof}
    It is easy to see that the strategy for $\mathcal{G}_{n_1}$ can be simulated in $\mathcal{G}_{n_2}$. 
    %
    %
    %
\end{proof}

\begin{lemma}\label{existentialgames}
    Let $\varphi$ be a sentence of $\lfo$.
    Now Eloise has a winning strategy in the bounded evaluation game $\mathcal{G}_\omega(\mathfrak{A},s,\varphi)$ iff there exists $n\in \mathbb{N}$ such that Eloise has a winning strategy in the $n$-bounded evaluation game $\mathcal{G}_n(\mathfrak{A},s,\varphi)$.
\end{lemma}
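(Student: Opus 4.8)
The plan is to prove the equivalence by simply unwinding the definition of the bounded evaluation game $\mathcal{G}_\omega$ and invoking the monotonicity supplied by Lemma \ref{monotonicity}.

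For the direction from left to right, I would suppose Eloise has a winning strategy $\tau$ in $\mathcal{G}_\omega(\mathfrak{A},s,\varphi)$. Recall that every play of $\mathcal{G}_\omega$ begins with Abelard choosing some $n'\in\mathbb{N}$ and Eloise responding with some $n\geq n'$, after which $\mathcal{G}_n(\mathfrak{A},s,\varphi)$ is played. Consider the play in which Abelard opens with $n'=0$. Then $\tau$ instructs Eloise to pick some particular $n\geq 0$, and from that point on $\tau$, restricted to the positions of the ensuing $\mathcal{G}_n(\mathfrak{A},s,\varphi)$, is by definition a winning strategy for Eloise in $\mathcal{G}_n(\mathfrak{A},s,\varphi)$. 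This witnesses the existential claim on the right-hand side.

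For the converse, suppose $n\in\mathbb{N}$ is such that Eloise has a winning strategy $\sigma$ in $\mathcal{G}_n(\mathfrak{A},s,\varphi)$. I would describe a winning strategy for Eloise in $\mathcal{G}_\omega(\mathfrak{A},s,\varphi)$ as follows: when Abelard opens with a number $n'$, Eloise replies with $m := \max(n,n')$, which is a legal reply since $m\geq n'$. Since $m\geq n$, Lemma \ref{monotonicity} (together with the trivial case $m=n$) yields a winning strategy for Eloise in $\mathcal{G}_m(\mathfrak{A},s,\varphi)$, which she then follows. Every play of $\mathcal{G}_\omega(\mathfrak{A},s,\varphi)$ in which Eloise plays in this way is a play of some $\mathcal{G}_m(\mathfrak{A},s,\varphi)$ that she wins, so by the definition of $\mathcal{G}_\omega$ she wins $\mathcal{G}_\omega(\mathfrak{A},s,\varphi)$.

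I do not expect any genuine obstacle here: the statement follows directly from the definition of $\mathcal{G}_\omega$ and from Lemma \ref{monotonicity}. The only point requiring slight care is that Eloise's chosen clock value must be at least Abelard's, which is precisely why one takes $\max(n,n')$ rather than just $n$ in the converse direction.
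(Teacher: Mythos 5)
Your proposal is correct and follows exactly the route the paper takes: the paper's proof simply says the claim is immediate from the definitions and Lemma \ref{monotonicity}, which is precisely the argument you spell out (Abelard's opening move forces Eloise to commit to some $n$ in one direction, and $\max(n,n')$ together with monotonicity handles the other). Your version just makes the details explicit.
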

\begin{proof}
Immediate by the definitions and Lemma \ref{monotonicity}.
\end{proof}

By the above Lemma, we have $\mathfrak{A},s\models_{\omega}\varphi$ iff Eloise has a winning strategy in $\mathcal{G}_n(\mathfrak{A},s,\varphi)$
for some $n$. Therefore, if we are only interested in whether or not a formula of $\lfo$ holds in a model $\mathfrak{A}$ under some assignment $s$, we can consider evaluation games where only Eloise declares the initial clock value $n$ and then the game $\mathcal{G}_n(\mathfrak{A},s,\varphi)$ is played. We can ignore the rule that also Abelard
declares a clock value. 
It is then easy to see that any two formulas are weakly equivalent with respect to the standard semantics of $\lfo$ iff they are weakly equivalent with respect to the new semantics. However, it is straightforward to show that the same does not hold for strong equivalence. The following gives a concrete example of how the alternative semantics affects the semantic games.

\begin{example}\label{example:small_clock}
    Consider the sentence
    \[\varphi := \neg \exists x (Px \land L \exists y (Rxy \land (Qy \lor \exists y (y = x \land C_L))),\]
    and let $\mathfrak{A}$ be any suitable model with the property that there exists an $R$-path $\pi$ between the element belonging to $P^\mathfrak{A}$ and an element belonging to $Q^\mathfrak{A}$, and the length (number of edges) of the shortest path from an element in $P^\mathfrak{A}$ to an element in $Q^\mathfrak{A}$ is greater than one. Now, in the game $\mathcal{G}_\omega(\mathfrak{A},\varphi)$, Abelard has the following winning strategy: choose $n'$ so that it is at least, say, the length of $\pi$, and then play along $\pi$ the evaluation game $\mathcal{G}_n(\mathfrak{A},\varphi)$ where $n \geq n'$ is the number selected by Eloise. However, if Eloise is the only player who has influence on the initial clock value, then she has a trivial strategy which guarantees that she does
    not lose the game: set the clock value to zero.
    This shows that while Abelard has a winning strategy in $\mathcal{G}_\omega(\mathfrak{A},\varphi)$, Eloise can easily prevent
    Abelard from winning in the alternative game. Nevetheless, even in the
    alternative game, Eloise surely has no winning strategy, as having one would
    imply she has one also in the standard game for $\lfo$.
\end{example}


Finally, note that our standard semantics for $\lfo$ has the 
property that $\mathfrak{A},s\models
\neg\varphi$ iff the (opponent 
player) Abelard has a winning strategy in $\mathcal{G}_{\omega}(\mathfrak{A},s,\varphi)$. This is a 
desirable property that gives an interpretation to negation,
and this property does not
hold as such in the alternative
semantics where Abelard does not
declare clock values.

\subsection{Approximants of formulas}\label{approximantssection}

A standard technique in the study of logics with fixed points (such as the modal $\mu$-calculus) is the use of approximants, which evaluate the fixed point only up to some fixed bound. Even though $\lfo$ is not really based on fixed points in any direct way, a natural notion of an
approximant can be defined also for $\lfo$. However, this notion differs from the corresponding notion for fixed point logics in a number of crucial ways. For example, we will have to adjust the approximants to take care of the number of negations
encountered in related plays of the semantic game.
These issues will become transparent
below when we give the related formal definitions.
Intuitively, the $n$th approximant for a formula of $\lfo$ will describe an evaluation game where the initial clock value is set to $n$. Before giving the definition of approximants, we provide the following auxiliary definition.

\begin{definition}\label{unfoldingdefinition}
Let $\varphi \in \lfo$. If $\varphi$ is
not a regular formula, we let $\varphi'$ denote a 
regularisation of it. To keep the current definition
deterministic, we suppose $\varphi'$ is obtained
from $\varphi$ in some systematic way. 
If $\varphi$ is already regular then $\varphi' = \varphi$. 
Now, the \textbf{$n$th
unfolding} (or \textbf{$n$-unfolding}) of $\varphi$, denoted by $\Psi_{\varphi}^n$, is defined inductively as follows. 
\begin{enumerate}
    \item 
The zeroeth
unfolding $\Psi_{\varphi}^{0}$ of $\varphi$ is
defined to be the formula $\varphi'$.
\item
The $(k+1)$st unfolding $\Psi_{\varphi}^{k+1}$ is the formula obtained from the $k$th unfolding $\Psi_{\varphi}^k$
by replacing every looping atom $C_{L}$ in $\Psi_{\varphi}^k$ by the corresponding 
reference formula $\mathrm{Rf}(C_{L})$ in $\Psi_{\varphi}^k$ (if 
the reference formula exists, i.e., if $C_L$ is
not free).
\end{enumerate}
%
%
%
\end{definition}

We now define the notion of an approximant.

\begin{definition}
%
%
%
Let $\varphi$ be a formula of $\lfo$. We define the \textbf{$n$th approximant} (or \textbf{$n$-approximant}) $\Phi_{\varphi}^n$ of $\varphi$ to be the $\mathrm{FO}$-formula
obtained from the $n$th unfolding $\Psi_{\varphi}^n$ by removing all the label symbols and replacing each occurrence of each looping atom by
\begin{enumerate}
    \item 
    $\bot$ if the occurrence of the atom is positive in $\Psi_{\varphi}^n$,
    \item
    $\top$ if the occurrence is negative in $\Psi_{\varphi}^n$.
\end{enumerate}
%
%
%
%
%
%
\end{definition}

\begin{example}
    Consider the sentence $LC_L \in \lfo$. Every approximant of this sentence is just $\bot$.
\end{example}

\begin{example}
    Consider the sentence 
    $$\varphi_{cycle} := \exists x \exists y\bigl(x = y 
    \wedge L(Eyx\vee \exists z(Eyz
    \wedge \exists y(y = z\wedge C_L))\bigr)$$
    of $\lfo$ from Proposition \ref{prop:lfo-expressive-power-examples} that defines the class of directed graphs that contain a cycle.
    Its first two approximants are
    $$\Phi^0\ =\ \exists x \exists y\bigl(x = y 
    \wedge (Eyx\vee \exists z(Eyz \wedge \exists y(y = z\wedge \bot))\bigr)$$
    and
    \begin{align*} 
    \Phi^1\ & =\ \exists x \exists y\bigl(x = y 
    \wedge (Eyx\vee \exists z(Eyz \wedge \exists y(y = z
    \wedge \chi\, ))\bigr)\text{ where}\\
    \chi &=  Eyx\vee \exists z(Eyz \wedge \exists y(y = z
    \wedge \bot ).
    \end{align*}
    The zeroeth approximant expresses that there exists a reflexive loop (i.e., a cycle of length one), while the first approximant asserts that there exists a cycle of length at most two. In general, the $n$th approximant expreses that there exists a cycle of length at most $n+1$.
\end{example}

The following lemma gives a natural 
characterization of approximants.

\begin{lemma}\label{approximationswork}
    Let $\varphi \in \lfo$ be a formula.
    Now Eloise has a winning strategy in the $n$-bounded game $\mathcal{G}_n(\mathfrak{A},s,\varphi)$ iff she has a winning strategy in $\mathcal{G}(\mathfrak{A},s,\Phi_{\varphi}^n)$.
\end{lemma}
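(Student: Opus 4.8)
The plan is to show that, once labels are erased, the $n$-bounded evaluation game $\mathcal{G}_n(\mathfrak{A},s,\varphi)$ and the ordinary $\fo$-evaluation game $\mathcal{G}(\mathfrak{A},s,\Phi_\varphi^n)$ are the same reachability game up to a relabelling of outcomes at certain leaves, and that at those leaves Eloise fails to win in both games; winning strategies then transfer in both directions. Since a safe renaming only renames label symbols while preserving the reference-formula structure that governs the jumps, the game $\mathcal{G}_n(\mathfrak{A},s,\varphi)$ is isomorphic to $\mathcal{G}_n(\mathfrak{A},s,\varphi')$ for any regularisation $\varphi'$; as $\Phi_\varphi^n$ is defined via $\varphi'$, we may and do assume that $\varphi$ is regular, so $\Psi_\varphi^0=\varphi$.

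The first step is to pin down the shape of $\Psi_\varphi^n$. By induction on $n$ one verifies that the syntax tree of $\Psi_\varphi^n$ is exactly the $n$-fold unravelling of the syntax tree of $\varphi$ along the jump edges from each looping atom $C_L$ to its reference formula $\rfor(C_L)$: a branch of this tree corresponds to a sequence of $\fo$-game moves on $\varphi$ (with $L$-symbols transparent) using at most $n$ jumps; a \emph{non-free} looping atom occurs in $\Psi_\varphi^n$ exactly at those points where a branch has already performed $n$ jumps and is about to perform one more; and a \emph{free} looping atom occurs exactly where a branch reaches a free $C_L$. Moreover a node is a positive (negative) occurrence in $\Psi_\varphi^n$ iff it lies below an even (odd) number of negations, which is precisely the sign $\#$ with which the matching position is reached in the evaluation games, since jump steps and $L$-steps never flip $\#$. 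Establishing this structural description --- in particular controlling the repeated grafting of reference subtrees and the (harmless) label clashes it introduces --- is the bulk of the work and the main obstacle.

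Granting this, I would match up plays. Given a play $(\psi_0,s_0,\#_0,m_0),(\psi_1,s_1,\#_1,m_1),\dots$ of $\mathcal{G}_n(\mathfrak{A},s,\varphi)$, with $m_0=n$ and $m_{i+1}=m_i-1$ exactly at jump steps, delete the clock components and the label-symbol positions; by the structural description this yields a branch of the syntax tree of $\Psi_\varphi^n$, i.e.\ a play of $\mathcal{G}(\mathfrak{A},s,\Phi_\varphi^n)$, and every play of the latter arises in this way. Along this correspondence all moves at $\neg,\wedge,\vee,\exists,\forall$ positions agree --- same player to move, same set of choices --- so a strategy for a player in one game is verbatim a strategy for that player in the other. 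For outcomes: if a play ends at a genuine $\fo$-atom $\alpha$, both games end there with the same winner. If a play of $\mathcal{G}_n(\mathfrak{A},s,\varphi)$ ends because a non-free looping atom is met with clock $0$, or a free looping atom is met, then neither player wins; in the matching play of $\mathcal{G}(\mathfrak{A},s,\Phi_\varphi^n)$ that atom has been replaced by $\bot$ if the occurrence is positive, so Abelard wins the position $(\bot,s_i,+)$, or by $\top$ if it is negative, so Abelard wins the position $(\top,s_i,-)$; either way Eloise does not win. Hence the only plays Eloise can win in either game are those ending at a genuine $\fo$-atom, such plays are in bijection under the correspondence, and corresponding ones have the same winner; therefore a strategy is winning for Eloise in $\mathcal{G}_n(\mathfrak{A},s,\varphi)$ iff its counterpart is winning for her in $\mathcal{G}(\mathfrak{A},s,\Phi_\varphi^n)$. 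This proves the lemma. (Alternatively the argument can be organised as an induction on $n$ via the identity $\Phi_\varphi^{n+1}=\Phi_{\Psi_\varphi^1}^{\,n}$ together with the observation that performing one unfolding is matched by decreasing the clock bound by $1$; the direct play correspondence seems cleaner.)
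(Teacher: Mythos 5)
Your proposal is correct and follows essentially the same route as the paper's own proof: both identify the game trees of $\mathcal{G}_n(\mathfrak{A},s,\varphi)$ and $\mathcal{G}(\mathfrak{A},s,\Phi_{\varphi}^n)$ up to the leaves where the clock expires (or a free looping atom is hit), observe that the positive/negative replacement by $\bot$/$\top$ makes those leaves non-winning for Eloise in both games, and transfer strategies by simulation. Your version merely spells out in more detail the unravelling structure of $\Psi_\varphi^n$ and the sign-tracking that the paper compresses into ``the game trees are essentially identical.''
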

\begin{proof}
First notice that by the construction of $\Phi_{\varphi}^n$,
the game trees of the games $\mathcal{G}_n(\mathfrak{A},s,\varphi)$
and $\mathcal{G}(\mathfrak{A},s,\Phi_{\varphi}^n)$
are essentially identical all the way from the root to the level with
positions of type $(C_L,r,+,0)$ and $(C_L,r,-,0)$ in $\mathcal{G}_n(\mathfrak{A},s,\varphi)$.
The 
position in $\mathcal{G}(\mathfrak{A},s,\Phi_{\varphi}^n)$
that corresponds to the position $(C_L,r,+,0)$ of 
$\mathcal{G}_n(\mathfrak{A},s,\varphi)$ is $(\bot,r,+)$, and similarly, the position
corresponding to $(C_L,r,-,0)$ is $(\top,r,-)$. Eloise would not win any play entering any of such positions. 
Therefore no winning strategy of Eloise will lead to a play that enters such positions.

Now, if Eloise has a winning strategy in
one of the games $\mathcal{G}_n(\mathfrak{A},s,\varphi)$
and $\mathcal{G}(\mathfrak{A},s,\Phi_{\varphi}^n)$, she
can simulate that strategy in the other game. Each play where Eloise follows a winning strategy will end with her winning in some position
involving some $\mathrm{FO}$-atom $\delta$. The simulated play in the other game will then, likewise, end with her win in a position with $\delta$. Notice that when simulating a winning strategy for $\mathcal{G}(\mathfrak{A},s,\Phi_{\varphi}^n)$ in order to 
win $\mathcal{G}_n(\mathfrak{A},s,\varphi)$, every non-winning position $(C_L,r,+,0)$ (respectively, $(C_L,r,-,0)$) will indeed be avoided because Eloise will not enter the corresponding position $(\bot,r,+)$ (resp., $(\top,r,-)$) in $\mathcal{G}(\mathfrak{A},s,\Phi_{\varphi}^n)$
because such a position would be losing for her.
A corresponding principle holds in the direction where Eloise simulates a strategy from $\mathcal{G}_n(\mathfrak{A},s,\varphi)$ to win a game on
the approximant. 
\end{proof}

The following lemma is immediate.

\begin{lemma}\label{indexmonotonicity}
   If $\mathfrak{A}\models \Phi_{\varphi}^n$, 
   then $\mathfrak{A}\models \Phi_{\varphi}^{n'}$
   for all $n' \geq n$.
\end{lemma}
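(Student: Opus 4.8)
The plan is to prove Lemma~\ref{indexmonotonicity} by reducing it to results already established, specifically Lemma~\ref{approximationswork} and Lemma~\ref{monotonicity}. The statement asserts that if $\mathfrak{A}\models \Phi_{\varphi}^n$ then $\mathfrak{A}\models \Phi_{\varphi}^{n'}$ for all $n'\geq n$; since $\Phi_{\varphi}^n$ and $\Phi_{\varphi}^{n'}$ are ordinary $\mathrm{FO}$-formulas, the turnstile here is just the usual first-order satisfaction, which by the game-theoretic semantics for $\mathrm{FO}$ means Eloise has a winning strategy in $\mathcal{G}(\mathfrak{A},\Phi_{\varphi}^n)$.

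First I would invoke Lemma~\ref{approximationswork}: Eloise has a winning strategy in $\mathcal{G}(\mathfrak{A},\Phi_{\varphi}^n)$ if and only if she has one in the $n$-bounded game $\mathcal{G}_n(\mathfrak{A},\varphi)$. So the hypothesis $\mathfrak{A}\models\Phi_{\varphi}^n$ gives Eloise a winning strategy in $\mathcal{G}_n(\mathfrak{A},\varphi)$. Next, since $n'\geq n$, Lemma~\ref{monotonicity} (the monotonicity of bounded games in the clock value) hands Eloise a winning strategy in $\mathcal{G}_{n'}(\mathfrak{A},\varphi)$. Finally, applying Lemma~\ref{approximationswork} again in the other direction converts this into a winning strategy in $\mathcal{G}(\mathfrak{A},\Phi_{\varphi}^{n'})$, i.e., $\mathfrak{A}\models\Phi_{\varphi}^{n'}$. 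One small point worth mentioning is the boundary case $n'=n$, which is trivial, so one may assume $n<n'$ and apply Lemma~\ref{monotonicity} directly as stated.

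There is essentially no obstacle here: every ingredient is a lemma proved immediately above, and the only thing to check is that the chain of ``iff''s composes correctly, which it does. If anything requires a word of care, it is making explicit that ``$\mathfrak{A}\models\Phi_{\varphi}^n$'' for the $\mathrm{FO}$-formula $\Phi_{\varphi}^n$ coincides with ``Eloise wins $\mathcal{G}(\mathfrak{A},\Phi_{\varphi}^n)$'', which is precisely the agreement between game-theoretic and Tarski semantics for $\mathrm{FO}$ recorded earlier in the Preliminaries. Given that, the proof is a two-line composition, and the lemma is indeed ``immediate'' as claimed.
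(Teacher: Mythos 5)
Your proof is correct and follows exactly the paper's route: the paper's own proof is literally ``Combine Lemma \ref{approximationswork} with Lemma \ref{monotonicity},'' and you have spelled out that composition (approximant $\to$ $n$-bounded game $\to$ $n'$-bounded game $\to$ approximant) correctly, including the harmless boundary case $n'=n$.
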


\begin{proof}
    Combine Lemma \ref{approximationswork} with Lemma \ref{monotonicity}.
\end{proof}

\subsection{Applications of approximants}

We will now show that $\lfo$ translates into $\mathcal{L}_{\omega_1\omega}$, i.e., the
extension of $\mathrm{FO}$ that allows for countably infinite conjunctions and disjunctions.

\begin{theorem}\label{approximationsequivalent}
    Let $\varphi$ be a formula of $\lfo$. 
    Then we have 
    $$\mathfrak{A},s \models_{\omega} \varphi\ \Leftrightarrow\  \mathfrak{A},s \models \bigvee_{n\in \mathbb{N}} \Phi_{\varphi}^n.$$
\end{theorem}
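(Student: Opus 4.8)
The plan is to reduce the claim to the already-established machinery. First I would recall that by Lemma \ref{existentialgames}, $\mathfrak{A},s\models_\omega\varphi$ holds if and only if Eloise has a winning strategy in $\mathcal{G}_n(\mathfrak{A},s,\varphi)$ for some $n\in\mathbb{N}$. By Lemma \ref{approximationswork}, this is equivalent to Eloise having a winning strategy in $\mathcal{G}(\mathfrak{A},s,\Phi_\varphi^n)$ for some $n$, i.e., to $\mathfrak{A},s\models\Phi_\varphi^n$ for some $n$ (using the soundness of game-theoretic semantics for $\mathrm{FO}$, which is noted right after the evaluation-game definition). So the whole statement boils down to the observation that $\mathfrak{A},s\models\bigvee_{n\in\mathbb{N}}\Phi_\varphi^n$ iff $\mathfrak{A},s\models\Phi_\varphi^n$ for some $n$.

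Next I would spell out why this last equivalence holds. In $\mathcal{L}_{\omega_1\omega}$, the semantics of a countable disjunction $\bigvee_{n\in\mathbb{N}}\psi_n$ is that it is true exactly when some disjunct $\psi_n$ is true; equivalently, in the natural extension of the evaluation game to $\mathcal{L}_{\omega_1\omega}$, at a position $(\bigvee_{n}\psi_n,s,+)$ Eloise picks an index $n$ and the game continues from $(\psi_n,s,+)$, so a winning strategy for her amounts to choosing an $n$ with $\mathfrak{A},s\models\Phi_\varphi^n$. Here I would remark that each $\Phi_\varphi^n$ is an $\mathrm{FO}$-formula with the same free variables as $\varphi$ (the looping atoms contribute no free variables, and unfolding only substitutes reference subformulas that lie in the scope of the corresponding label, hence introduce no new free variables), so the disjunction is a legitimate $\mathcal{L}_{\omega_1\omega}$-formula suitable for $\mathfrak{A}$ and $s$.

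Putting these together gives the chain
\[
\mathfrak{A},s\models_\omega\varphi
\;\Longleftrightarrow\;
\exists n\,(\mathfrak{A},s\models\Phi_\varphi^n)
\;\Longleftrightarrow\;
\mathfrak{A},s\models\bigvee_{n\in\mathbb{N}}\Phi_\varphi^n,
\]
which is exactly the claim.

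I do not expect a genuine obstacle here: the theorem is essentially a repackaging of Lemmas \ref{existentialgames} and \ref{approximationswork}. The only point requiring a little care is making sure the translation target is well-formed, namely that all the $\Phi_\varphi^n$ share a common (finite) set of free variables so that $\bigvee_{n\in\mathbb{N}}\Phi_\varphi^n$ is a single $\mathcal{L}_{\omega_1\omega}$-formula over the right vocabulary; this follows from the free-variable rules for looping atoms and labels together with the fact that $\mathrm{Rf}(C_L)$ is always a subformula in the scope of $L$, so unfolding cannot free any variable. Beyond that bookkeeping, the proof is immediate.
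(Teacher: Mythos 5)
Your proof is correct and follows essentially the same route as the paper's: combine Lemma \ref{existentialgames} with Lemma \ref{approximationswork} and then observe that truth of the countable disjunction is just truth of some disjunct. The extra bookkeeping you add about the free variables of the approximants is fine but not something the paper bothers to spell out.
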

\begin{proof}
    By Lemma \ref{existentialgames}, we have $\mathfrak{A},s \models_{\omega} \varphi$ iff there exists some $n\in \mathbb{N}$ such that Eloise has a winning strategy in  $\mathcal{G}_n(\mathfrak{A},s,\varphi)$.
    By Lemma \ref{approximationswork}, the latter condition is equivalent to there existing some $n$ such that $\mathfrak{A} \models \Phi_{\varphi}^n$.
\end{proof}

We can use theorem \ref{approximationsequivalent} to prove undefinability results for $\lfo$, and as an example of this, we next prove that graph connectivity is not definable in $\lfo$.

\begin{proposition}
    The class of (finite and infinitary) connected graphs is not definable in $\lfo$.
\end{proposition}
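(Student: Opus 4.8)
The plan is to combine Theorem~\ref{approximationsequivalent}, which collapses every $\lfo$ sentence into a countable disjunction of first-order approximants, with the classical fact that connectivity is not first-order expressible. Suppose, for contradiction, that some $\lfo$ sentence $\varphi$ defines the class of connected graphs. By Theorem~\ref{approximationsequivalent}, for every graph $\mathfrak{A}$ we have that $\mathfrak{A}$ is connected iff $\mathfrak{A}\models\bigvee_{n\in\mathbb{N}}\Phi_{\varphi}^n$, i.e., iff $\mathfrak{A}\models\Phi_{\varphi}^n$ for some $n\in\mathbb{N}$; and each $\Phi_{\varphi}^n$ is an ordinary $\mathrm{FO}$-sentence. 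So the class of connected graphs is a countable union of elementary classes, and it remains to derive a contradiction from this.

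The main step is to show that each approximant $\Phi_{\varphi}^n$ imposes a finite bound on the diameter of its graph models. Fix $n$ and expand the graph vocabulary by two fresh constants $c,d$. For each $m\in\mathbb{N}$ let $\sigma_m$ be the first-order sentence asserting that $c\neq d$ and there is no directed path of length at most $m$ from $c$ to $d$; this is expressible since $m$ is fixed. Consider the first-order theory $T_n=\{\Phi_{\varphi}^n\}\cup\{\sigma_m : m\in\mathbb{N}\}$. Any model of $T_n$ satisfies $\Phi_{\varphi}^n$, so its graph reduct is connected by the previous paragraph; but it also witnesses two points $c,d$ joined by no finite path, so its graph reduct is \emph{not} connected --- a contradiction. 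Hence $T_n$ is unsatisfiable, and by the compactness theorem for $\mathrm{FO}$ already a finite subset is; since $\sigma_m$ implies $\sigma_{m'}$ for $m'\le m$, this means $\{\Phi_{\varphi}^n,\sigma_{m_0}\}$ is unsatisfiable for some $m_0$. As $c,d$ do not occur in $\Phi_{\varphi}^n$, it follows that every graph satisfying $\Phi_{\varphi}^n$ has, between any two vertices, a directed path of length at most $m_0$; that is, it has diameter at most $m_0$.

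It then only remains to exhibit a connected graph of infinite diameter. Let $\mathfrak{L}$ be the two-way infinite line: the directed graph with domain $\mathbb{Z}$ and $E^{\mathfrak{L}}=\{(i,j) : |i-j|=1\}$ (its edge relation is symmetric, so connectivity here is unambiguous). This graph is connected, so by the first paragraph $\mathfrak{L}\models\Phi_{\varphi}^n$ for some $n$, whence by the second paragraph $\mathfrak{L}$ has diameter at most some $m_0\in\mathbb{N}$. But the directed distance from $0$ to $m_0+1$ in $\mathfrak{L}$ is $m_0+1$, so $diam(\mathfrak{L})=\infty$ --- a contradiction. This completes the argument.

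The only nonroutine ingredient beyond Theorem~\ref{approximationsequivalent} is the standard undefinability fact about the infinite line; everything else is bookkeeping. An alternative to the last two paragraphs avoids compactness entirely via an Ehrenfeucht--Fra\"{i}ss\'{e} argument: $\mathfrak{L}$ is elementarily equivalent to the disjoint union $\mathfrak{L}\sqcup\mathfrak{L}$ of two copies of itself --- Duplicator wins every finite-round game by matching bounded neighborhoods exactly and answering ``distant'' moves by moving far away, using the second copy when necessary --- while $\mathfrak{L}$ is connected and $\mathfrak{L}\sqcup\mathfrak{L}$ is not; since the witnessing approximant $\Phi_{\varphi}^n$ with $\mathfrak{L}\models\Phi_{\varphi}^n$ is first-order, we also get $\mathfrak{L}\sqcup\mathfrak{L}\models\Phi_{\varphi}^n$, forcing $\mathfrak{L}\sqcup\mathfrak{L}$ to be connected, a contradiction. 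Either route keeps the proof short.
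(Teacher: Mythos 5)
Your proposal is correct, and its core is the same as the paper's: invoke Theorem \ref{approximationsequivalent} to replace the hypothetical $\lfo$-definition of connectivity by the countable disjunction of $\mathrm{FO}$-approximants $\Phi_{\varphi}^n$, and then defeat a single witnessing approximant by a classical first-order undefinability argument. Where you diverge is in how that single approximant is defeated. The paper takes the shortest path: it observes that the two-way infinite line $\mathfrak{G}_1$ over $\mathbb{Z}$ is elementarily equivalent to the disjoint union $\mathfrak{G}_2$ of two copies of itself, so the approximant satisfied by the connected $\mathfrak{G}_1$ transfers to the disconnected $\mathfrak{G}_2$ --- exactly the ``alternative route'' you sketch in your last paragraph. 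Your primary route instead runs a compactness argument with fresh constants to show that any approximant entailing connectivity must in fact bound the diameter of its models by some fixed $m_0$, and then contradicts this with the infinite line, whose distances are unbounded. Both arguments are sound; minor points worth noting are that the fresh constants need to be encoded relationally to stay within the paper's purely relational setting (the paper does the same kind of encoding elsewhere, e.g.\ in Lemma \ref{compactness}, so this is harmless), and that your diameter-bound lemma is slightly more information than needed --- it is essentially a quantitative restatement of the fact, already observed after Proposition \ref{prop:lfo-expressive-power-examples}, that $\lfo$ lacks compactness because it can express finiteness of diameter. The elementary-equivalence transfer is the more economical proof; your compactness route buys the explicit uniform bound $m_0$ on path lengths enforced by each approximant, which is a reusable observation but not required for the statement.
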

\begin{proof}
    Let 
    \[\mathfrak{G}_1 = (\mathbb{Z},\{(n,n+1) \mid n \in \mathbb{Z}\} \cup \{(n+1,n) \mid n\in \mathbb{Z}\}),\]
    and let $\mathfrak{G}_2$ be the disjoint union of two copies of $\mathfrak{G}_1$. Suppose that there exists a sentence $\varphi \in \lfo$ which is true in a graph iff the graph is connected. Then $\mathfrak{G}_1 \models_{\omega} \varphi$ and hence $\mathfrak{G}_1 \models \Phi_{\varphi}^n$ for some $n\in \mathbb{N}$. Now, it is straighforward to show that $\mathfrak{G}_1$ and $\mathfrak{G}_2$ are elementarily equivalent (for example using Ehrenfeucht-Fra\"{i}ss\'{e} games). Thus we have $\mathfrak{G}_2 \models \Phi_\varphi^n$, which implies that $\mathfrak{G}_2 \models_{\omega} \varphi$ by Theorem \ref{approximationsequivalent}. This is a contradiction.
\end{proof}

We then relate validities of $\lfo$ to
approximants.

\begin{theorem}\label{LFOVALIDITY}
    Let $\varphi \in \lfo$ be a formula. Now $\varphi$ is valid iff $\Phi_{\varphi}^n$ is valid for some $n\in \mathbb{N}$.
\end{theorem}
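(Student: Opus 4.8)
The plan is to prove the two implications separately. The implication from right to left is immediate from the earlier lemmas, so the content of the theorem lies in the left-to-right direction, which I would obtain by applying the compactness theorem of first-order logic to the approximants $\Phi_\varphi^n$ — these are genuine $\mathrm{FO}$-formulas, even though $\lfo$ itself is not compact.

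\textbf{Right to left.} Suppose $\Phi_\varphi^n$ is valid for some fixed $n$. Let $\mathfrak{A}$ and $s$ be suitable. Then $\mathfrak{A},s\models\Phi_\varphi^n$, so by Lemma \ref{approximationswork} Eloise has a winning strategy in $\mathcal{G}_n(\mathfrak{A},s,\varphi)$, and hence $\mathfrak{A},s\models_\omega\varphi$ by Lemma \ref{existentialgames}. Since $\mathfrak{A}$ and $s$ were arbitrary, $\varphi$ is valid.

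\textbf{Left to right.} I would argue by contraposition. Assume $\Phi_\varphi^n$ is not valid for any $n\in\mathbb{N}$, and consider the first-order theory $\Sigma=\{\neg\Phi_\varphi^n\mid n\in\mathbb{N}\}$ over the (fixed) vocabulary of $\varphi$. Lemma \ref{indexmonotonicity} says that a model of $\Phi_\varphi^n$ is also a model of $\Phi_\varphi^{n'}$ for every $n'\geq n$; contrapositively, any model of $\neg\Phi_\varphi^{N}$ satisfies $\neg\Phi_\varphi^{n}$ for every $n\leq N$. By assumption, for each $N$ there is a model (with a suitable assignment) of $\neg\Phi_\varphi^{N}$, and by the previous remark this model satisfies every member of $\Sigma$ whose index is at most $N$. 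Hence every finite subset of $\Sigma$ is satisfiable, and by compactness $\Sigma$ has a model $\mathfrak{B}$ with some assignment $s$. Then $\mathfrak{B},s\not\models\Phi_\varphi^n$ for every $n$, so by Lemma \ref{approximationswork} Eloise has no winning strategy in $\mathcal{G}_n(\mathfrak{B},s,\varphi)$ for any $n$, and therefore $\mathfrak{B},s\not\models_\omega\varphi$ by Lemma \ref{existentialgames}. Thus $\varphi$ is not valid, completing the contraposition.

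Two small points need attention. First, if $\varphi$ has free variables then Lemma \ref{existentialgames} must be used in its obvious extension to formulas together with assignments, whose proof is unchanged; alternatively one may replace the free variables with fresh constant symbols, noting that forming approximants commutes with this substitution, and run the argument on the resulting sentence. Second, the only nontrivial step is the appeal to first-order compactness; everything else is bookkeeping with Lemmas \ref{approximationswork}, \ref{existentialgames} and \ref{indexmonotonicity}. I expect the compactness step to be the conceptual crux, and it is worth stressing that it does not conflict with the non-compactness of $\lfo$ itself: compactness is applied only to the first-order approximants, whereas the equivalent infinitary disjunction $\bigvee_{n\in\mathbb{N}}\Phi_\varphi^n$ from Theorem \ref{approximationsequivalent} remains genuinely infinitary.
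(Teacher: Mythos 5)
Your proof is correct and follows essentially the same route as the paper: the right-to-left direction via the approximant lemmas, and the left-to-right direction by applying first-order compactness to $\{\neg\Phi_\varphi^n\mid n\in\mathbb{N}\}$, using Lemma \ref{indexmonotonicity} to get finite satisfiability. The only cosmetic difference is that you invoke Lemmas \ref{approximationswork} and \ref{existentialgames} directly where the paper cites Theorem \ref{approximationsequivalent}, which packages the same content.
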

\begin{proof}
    The direction from right to left follows directly from Theorem \ref{approximationsequivalent}.
    Suppose then that none of the sentences $\Phi_{\varphi}^n$ is valid. We claim that then
    \[\Sigma := \{\neg \Phi_{\varphi}^n \mid n\in \mathbb{N}\}\]
    is satisfiable. By compactness, it suffices to show that all finite subsets of $\Sigma$ are satisfiable. But this is clear, since each sentence $\neg \Phi_\varphi^n$ is now  satisfiable (due to no $\Phi_\varphi^n$ being valid) and we have $\neg \Phi_\varphi^n \models \neg \Phi^{n'}$ for all $n' < n$ by Lemma \ref{indexmonotonicity}. Since $\Sigma$ is satisfiable, we have $\mathfrak{A},s \models \Sigma$ for some $\mathfrak{A}$ and $s$. Thus Eloise cannot have a winning strategy in  $\mathcal{G}_n(\mathfrak{A},s,\varphi)$ for any $n\in \mathbb{N}$ by Lemma \ref{approximationswork}. Therefore $\varphi$ is not valid.
\end{proof}

Since the set of valid $\mathrm{FO}$-sentences is recursively enumerable, the following corollary is immediate.

\begin{corollary}
    The set of valid formulas of $\lfo$ is recursively enumerable.
\end{corollary}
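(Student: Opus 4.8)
The plan is to derive the corollary directly from Theorem~\ref{LFOVALIDITY}. That theorem tells us that a formula $\varphi\in\lfo$ is valid if and only if some approximant $\Phi_\varphi^n$ is a valid $\mathrm{FO}$-sentence. So the semi-decision procedure for validity in $\lfo$ is the following: given $\varphi$, enumerate $n=0,1,2,\dots$, compute the $n$th approximant $\Phi_\varphi^n$, and run the standard semi-decision procedure for $\mathrm{FO}$-validity (e.g.\ a proof search in a complete calculus for $\mathrm{FO}$, or G\"odel's completeness theorem via enumeration of proofs) on $\Phi_\varphi^n$. If $\varphi$ is valid, then by Theorem~\ref{LFOVALIDITY} some $\Phi_\varphi^n$ is $\mathrm{FO}$-valid, and this witness will eventually be found; conversely, if the procedure ever halts, it has found an $\mathrm{FO}$-valid approximant, so $\varphi$ is valid. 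Hence the set of valid $\lfo$-formulas is recursively enumerable.

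First I would note that the map $\varphi\mapsto\Phi_\varphi^n$ is effective: the regularisation $\varphi'$ is obtained by a fixed systematic renaming procedure, the $n$th unfolding $\Psi_\varphi^n$ is built by $n$ rounds of the syntactic substitution of reference formulas for looping atoms (Definition~\ref{unfoldingdefinition}), and passing from $\Psi_\varphi^n$ to $\Phi_\varphi^n$ amounts to deleting label symbols and replacing each looping atom occurrence by $\bot$ or $\top$ according to its polarity---all of these are primitive-recursive operations on the syntax tree. Then I would invoke the classical fact that the set of valid $\mathrm{FO}$-sentences is recursively enumerable (G\"odel's completeness theorem together with the effectiveness of proof checking in any standard complete calculus), which the corollary's preamble already takes for granted. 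Dovetailing the $\mathrm{FO}$-validity search across all $n$ (interleaving so that no single $n$ blocks progress) gives a single algorithm that halts precisely on the valid $\lfo$-formulas.

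There is essentially no obstacle here: the entire content has been front-loaded into Theorem~\ref{LFOVALIDITY}, and the corollary is a routine ``r.e.\ is closed under effective reductions to r.e.'' observation. The only points worth stating explicitly are the effectiveness of the approximant construction and the dovetailing, both of which are standard. One small subtlety to mention, if desired, is that we search for $\mathrm{FO}$-\emph{proofs} of approximants rather than trying to decide $\mathrm{FO}$-validity outright (which is undecidable); since $\mathrm{FO}$-validity is only r.e., not decidable, the resulting procedure for $\lfo$-validity is likewise only a semi-decision procedure, which is exactly what ``recursively enumerable'' asserts.
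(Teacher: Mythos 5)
Your proposal is correct and matches the paper's (one-line) argument exactly: the corollary is derived from Theorem~\ref{LFOVALIDITY} together with the recursive enumerability of $\mathrm{FO}$-validity, via an effective, dovetailed search over the approximants $\Phi_\varphi^n$. The extra remarks on the effectiveness of the approximant construction and on searching for proofs rather than deciding validity are sensible elaborations of what the paper leaves implicit.
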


Since $\lfo$ is not closed under contradictory negation, the above corollary leaves open the complexity of its satisfiability problem. Using approximants, we can determine the exact complexity of the satisfiability problem of $\lfo$.
Before that, we give some auxiliary definitions.

The \textbf{grid} is the structure $\mathfrak{G}\, :=\,  (\mathbb{N}\times\mathbb{N}, H , V)$ where the binary relations 
    $$H := \{\, ((i,j),(i+1,j))\ |\ i,j\in \mathbb{N}\, \}$$
and $$V := \{\, ((i,j),(i,j+1))\ |\ i,j\in \mathbb{N}\, \}$$
are suggestively called the \textbf{horizontal} and 
\textbf{vertical successor relation}. By $\mathfrak{G}^*$ we denote the expansion $(\mathbb{N}\times\mathbb{N}, H , V,H^*,V^*)$ of $\mathfrak{G}^*$ where $H^*$ and $V^*$ are the transitive closures of $H$ and $V$.

Now consider a deterministic Turing machine $M$ with one-way infinite tape and one read-write head. Denote the set of states of $M$ by $Q$.
Let $S_t$ and $S_i$ be, respectively, the sets of tape and input symbols of $M$, and denote $S := S_t\cup S_i$. Let $s\in S_i^*$ be an input to $M$. Let $\tau$ denote the vocabulary that contains the relation symbols of $\mathfrak{G}^*$ and additionally a unary relation symbol $P_u$ 
for each $u\in S\cup Q$.
The \textbf{computation table} $\mathfrak{T}_{M,s}$ of $M$ with the input $s$ is
the expansion of $\mathfrak{G}^*$ defined as follows.
\begin{enumerate}
    \item 
    Consider the point $(i,j)$ of $\mathfrak{T}_{M,s}$.
    For each predicate $P_u\in S$, we define $P_u$ to be
    true at the point $(i,j)$ of $\mathfrak{T}_{M,s}$ iff at time $j$, the tape cell $i$ contains the symbol $u$.
    Thus, intutively, the tape of $M$ at time $j$ is encoded by the $j$th row of the structure $\mathfrak{T}_{M,s}$. Note that the input $s$ is of course encoded to the beginning cells of row $0$.
    \item
    We define $P_u\in Q$ to be true at the point $(i,j)$ of $\mathfrak{T}_{M,s}$ iff at time $j$, the read-write head of $M$ is at the cell $i$ and the
    current state of $M$ is $u$.
\end{enumerate}

A \textbf{generalized ordered grid} is a structure 
$\mathfrak{T}\, :=\,  (A\times B, H , V, H^*, V^*)$ 
such that the following conditions hold.
\begin{enumerate}
    \item 
    The sets $A$ and $B$ are domains of two discrete order structures $(A,<^A)$ and $(B,<^B)$. Both $(A,<^A)$ and $(B,<^B)$ have a mininum element. The relation $H \subseteq (A\times B)\times (A\times B)$ is the \textbf{horizontal successor relation}
    $$H\, :=\, \{ \bigl((a,b),(a',b)\bigr)\ |\   
    b\in B\text{ and }a'\text{ is the }<^A\text{-successor of }a\, \}. $$
    Similarly, $V$ is the \textbf{vertical successor relation}
    $$V\, :=\, \{ \bigl((a,b),(a,b')\bigr)\ |\   
    a\in A\text{ and }b'\text{ is the }<^B\text{-successor of }b\, \}. $$
    \item
    $H^*$ is the relation $$\{ \bigl((a,b),(a',b)\bigr)\ |\   
    b\in B\text{ and }a <^A a'\}$$
    and $V^*$ the relation $$\{ \bigl((a,b),(a,b')\bigr)\ |\   
    a\in A\text{ and }b <^B b'\}.$$
\end{enumerate}

Our next aim is to define the notion of a \textbf{generalized computation table}. These are
structures that resemble computation tables but are built on generalised ordered grids.
To define the notion, we let $M$ denote, as above, a deterministic Turing machine with a one-way infinite tape and one read-write head. The set of
states of $M$ is denoted by $Q$. We let $S_t$ and $S_i$ be the sets of tape and input symbols of $M$, and we define $S := S_t\cup S_i$. We let $\tau$ be the vocabulary containing the relation symbols of generalized ordered grids and additionally a unary relation symbol $P_u$ for each $u\in S\cup Q$.

Now, consider an infinite discrete order
structure $(A,<^A)$ with a minimum element $0^A$. Suppose $A'\subseteq A$ is a prefix set for $(A,<^A)$. Then a function $s : A'\rightarrow S_i$ is a \textbf{generalized input} for $M$ and $(A,<^A)$, the intuition being that $s$ labels some prefix of $(A,<^A)$ with the input symbols of $M$. Let $(B,<^B)$ be an infinite discrete order structure with a minimum element $0^B$.
Now consider the 
structure $\mathfrak{T}_{M,s}$ defined as follows.
\begin{enumerate}
    \item 
    $\mathfrak{T}_{M,s}$ expands 
    the generalized grid $\mathfrak{T} = (A\times B,H,V,H^*,V^*)$ to the vocabulary $\tau$.
    \item
    For each $b\in B$, there exists precisely one cell $(a,b)\in A \times B$ such that for some $u\in Q$, the predicate $P_u$ is satisfied at $(a,b)$. Furthermore, if some $P_u$ is satisfied at the cell $(a,b)$, then no other $P_{u'}$ with $u\in Q$ is satisfied at that cell. Intuitively, all this simply means that
    the following two conditions hold.
    \begin{itemize}
        \item At the computation stage indexed by (the row) $b\in B$, the read-write head is located in the cell $a\in A$.
        \item
        At that computation stage, the machine $M$ is in state $u\in Q$.
    \end{itemize} 
    \item
    The first cells of the row $0^B$ are indexed according to the generalized input $s$ in the natural way. In other words, the (possibly infinitary) input to $M$ is as given by the generalized input $s$.
    \item
    At each point $(a,b)$ of the structure excluding row zero, the truth of the predicates $P_u$ with $u\in S$ are determined by neighbouring cells in the previous row, i.e., the cells $(a',b')$, $(a,b')$, $(a'',b')$ such that 
    $(a,b')V(a,b)$ and $(a',b')H(a,b')$
    and $(a,b')H(a'',b')$. The truth of the predicates is determined in the natural way according to the computation of $M$. Note that these predicates can change their truth value only if the predicates indicating the position of the read-write head is in the vicinity. The predicates corresponding to the read-write head are similarly locally related to the computation of the machine and relate to the other symbols in the correct way as allowed by $M$. The read-write head begins from the first cell in the beginning of computation. 
\end{enumerate}
Such a structure is a \textbf{generalized computation table} for $M$ and $s$. A generalized computation table for $M$ is a structure that is a generalized computation table for $M$ and some generalized input $s$.

\begin{lemma}\label{tablelemma}
    Let $M$ be a deterministic one-way Turing machine with one read-write head.
    The class of generalized computation tables for $M$ is definable by an $\mathrm{FO}$-sentence.
\end{lemma}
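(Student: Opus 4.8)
The plan is to exhibit a single first-order $\tau$-sentence $\theta_M$ which is the conjunction of \emph{grid axioms} $\theta_{\mathrm{grid}}$, asserting that the $\{H,V,H^*,V^*\}$-reduct is (isomorphic to) a generalized ordered grid, and \emph{machine axioms} $\theta_{\mathrm{mach}}$, asserting that the predicates $P_u$ encode a computation of $M$. The point that keeps everything first-order is that $H^*$ and $V^*$ belong to $\tau$ and are \emph{given}, not computed from $H,V$; hence no transitive-closure machinery is needed, and $\theta_M$ is just a finite list of local and first-order conditions. First I would deal with $\theta_{\mathrm{grid}}$, then with the (entirely routine, since $M$ is fixed) machine part.

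For $\theta_{\mathrm{grid}}$ I would state: $H^*$ and $V^*$ are irreflexive and transitive; the relation ``$x$ and $y$ lie in the same \emph{row}'', defined as $H^*xy \lor H^*yx \lor x=y$, is transitive, hence an equivalence relation, and dually for the ``same \emph{column}'' relation built from $V^*$; within each row $H^*$ is a strict linear order with a least element, which is discrete in the sense that $H$ is precisely the immediate-successor relation of $H^*$ within the row and every non-maximal element of the row has an $H$-successor (and dually for columns, $V^*$, $V$); the product condition that for all $x,y$ there is exactly one $z$ lying in the same row as $x$ and the same column as $y$; and the compatibility condition that the ``same column'' relation preserves $H^*$ (if $H^*xy$, and $x,x'$ as well as $y,y'$ are column-equivalent, then $H^*x'y'$), and dually that ``same row'' preserves $V^*$. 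One then checks that a model of $\theta_{\mathrm{grid}}$ is isomorphic to a generalized ordered grid: fix any row as $(A,<^A)$ and any column as $(B,<^B)$; the product condition gives a bijection $A\times B\to$ the domain, and the compatibility and successor axioms make the column-indexed maps between rows order isomorphisms, so this bijection carries $(<^A,<^B)$ exactly onto $(H,V,H^*,V^*)$, with $(A,<^A)$ and $(B,<^B)$ discrete orders with a minimum. (FO cannot additionally pin down that $A$ and $B$ are infinite; this is the only clause of the definition escaping FO and it is immaterial for the use made of the lemma, so the statement should be read accordingly.)

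For $\theta_{\mathrm{mach}}$ I would identify \emph{row $0$} as the set of cells with no $V$-predecessor (equivalently, the $V^*$-least row) and \emph{column $0$} dually, and then conjoin: an \emph{initialization} axiom, saying that in row $0$ the cells carrying a symbol of $S_i$ form a downward-closed subset of the row order, every row-$0$ cell carries exactly one symbol of $S$ (the non-input cells carrying the blank), the unique cell in column $0$ of row $0$ carries the start-state predicate, and no other row-$0$ cell carries a $Q$-predicate; a \emph{consistency} axiom, saying that every cell carries exactly one $P_u$ with $u\in S$, every row contains exactly one cell carrying some $P_u$ with $u\in Q$, and no cell carries two distinct such; and a \emph{transition} axiom, saying that for every cell $x$ having a $V$-predecessor $x_0$ (the cell ``directly above'' $x$), with left and right $H$-neighbours $x_-,x_+$ of $x_0$, the $S$- and $Q$-labels of $x$ are exactly those dictated by the transition function of $M$ read off the labels of $x_-,x_0,x_+$ --- a finite case distinction, since $M$ is deterministic and $S\cup Q$ is fixed and finite --- together with boundary clauses for the leftmost column (only $x_0,x_+$ present) and, if the grid happens to be finite, the rightmost column. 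A structure satisfies $\theta_M=\theta_{\mathrm{grid}}\land\theta_{\mathrm{mach}}$ iff it is a generalized computation table for $M$ and some generalized input.

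The construction is conceptually routine once one observes that the presence of $H^*,V^*$ in the vocabulary removes any need for transitive closure. The parts requiring care --- and the main obstacle --- are (i) the completeness argument for $\theta_{\mathrm{grid}}$, i.e.\ checking that the axiom list really pins down generalized ordered grids up to isomorphism (the compatibility clause is what rules out ``twisted'' near-grids in which the column bijections between two rows fail to be order isomorphisms), and (ii) the usual bookkeeping of boundary cells in the initialization and transition axioms, where the three-cells-determine-the-next-cell recipe degenerates at the left edge and in row $0$.
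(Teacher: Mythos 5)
The paper states this lemma without proof, so there is no ``paper's approach'' to compare against; your construction is the standard tiling-style encoding that the authors evidently have in mind, and it is essentially correct. The two load-bearing observations are both present: (i) since $H^*$ and $V^*$ are primitives of the vocabulary rather than objects to be computed from $H$ and $V$, no transitive-closure machinery is needed and every clause of the definition localizes to a first-order condition; (ii) the compatibility axiom (column-equivalence preserves $H^*$, row-equivalence preserves $V^*$) is exactly what makes the ``same row''/``same column'' bijections order isomorphisms, which is what the reconstruction of the product structure $A\times B$ from a model of $\theta_{\mathrm{grid}}$ requires. Your completeness argument goes through: the product condition forces each row--column pair to meet in a unique point (which also gives that no two distinct points are simultaneously row- and column-equivalent), compatibility transports least elements so that a global origin exists, and the induced bijection carries $(<^A,<^B)$ onto $(H,V,H^*,V^*)$. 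The machine axioms are routine as you say, and you correctly read off from the definition that the input clause only requires the $S_i$-labelled cells of row $0$ to form an $H^*$-downward-closed set.

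One small correction to your parenthetical caveat. It is true that the literal class of generalized computation tables (which requires the underlying discrete orders $A$ and $B$ to be \emph{infinite}) is not exactly FO-definable, but the natural repair is not to drop the condition: adding the axiom that every element has an $H$-successor and a $V$-successor forces both orders to be infinite (a discrete order with a minimum and no maximal element is infinite), and this axiom holds in every genuine table $\mathfrak{T}_{M,s}$ over $\mathbb{N}\times\mathbb{N}$. The resulting sentence defines the generalized computation tables built on orders without maxima rather than the full class, but this discrepancy is invisible to the application in Theorem \ref{prop:lfo-sat}, where only the existence of standard models $\mathfrak{T}_{M^*,1^n}$ and the faithfulness of the simulation on the standard part of an arbitrary model are used. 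So the lemma should indeed be read modulo this adjustment, but the adjustment is an extra axiom, not a silent weakening of the statement.
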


We are now ready to prove the following.

\begin{theorem}\label{prop:lfo-sat}
    The satisfiability problem for $\lfo$ is $\Sigma_2^0$-complete.
\end{theorem}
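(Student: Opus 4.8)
The plan is to obtain the $\Sigma_2^0$ upper bound from the approximant machinery of the subsection on approximants, and $\Sigma_2^0$-hardness by reducing the complement of the Turing-machine totality problem, using the $\mathrm{FO}$-definability of generalized computation tables together with an $\lfo$ gadget that pins inputs down to be finite. For the upper bound: by Theorem~\ref{approximationsequivalent} we have $\mathfrak{A},s\models_{\omega}\varphi$ iff $\mathfrak{A},s\models\bigvee_{n\in\mathbb{N}}\Phi_{\varphi}^n$, so $\varphi$ is satisfiable iff the $\mathrm{FO}$-sentence $\Phi_{\varphi}^n$ is satisfiable for some $n\in\mathbb{N}$. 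The map $(\varphi,n)\mapsto\Phi_{\varphi}^n$ is computable, and satisfiability of an $\mathrm{FO}$-sentence $\psi$ is $\Pi_1^0$: by the completeness theorem $\psi$ is unsatisfiable iff $\neg\psi$ has a derivation, and having a derivation of length $\le k$ is decidable. Hence ``$\varphi$ is satisfiable'' has the form $\exists n\,[\Phi_{\varphi}^n\text{ is satisfiable}]$ and is $\Sigma_2^0$.

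For hardness I would reduce the set $\mathrm{coTOT}$ of (codes of) deterministic one-way Turing machines $M$ with one read-write head that fail to halt on at least one input; this set is $\Sigma_2^0$-complete, being the complement of the $\Pi_2^0$-complete totality problem. Given such an $M$, let $\theta_M$ be the $\mathrm{FO}$-sentence defining the generalized computation tables for $M$ (Lemma~\ref{tablelemma}), let $\eta_M$ be the $\mathrm{FO}$-sentence stating that no halting state of $M$ ever labels a cell, and let $\chi$ be an $\lfo$-sentence, built with the distance-bounding idea behind $\varphi_{diam}$ in Proposition~\ref{prop:lfo-expressive-power-examples}, asserting that there is a uniform $n\in\mathbb{N}$ such that every non-blank cell of row $0$ of the grid reaches the origin in at most $n$ horizontal (predecessor) steps. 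Put $\varphi_M:=\theta_M\wedge\eta_M\wedge\chi$, a formula of $\lfo$ that is computable from $M$. The point of $\chi$ is that a discrete order can be traversed backwards towards its minimum only along its standard part, so in any model of $\varphi_M$ the non-blank content of row $0$ is a genuine finite input word situated at standard positions, with all other row-$0$ cells blank.

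It then remains to check that $M\in\mathrm{coTOT}$ iff $\varphi_M$ is satisfiable. If $M$ does not halt on the finite input $x$, then the standard computation table $\mathfrak{T}_{M,x}$ over $\mathfrak{G}^*$ is a model of $\varphi_M$: it is a generalized computation table for $M$, it never enters a halting state since $M$ loops on $x$, and its row $0$ carries only the finitely many symbols of $x$. Conversely, suppose $\mathfrak{B}\models\varphi_M$. By $\theta_M$ the structure $\mathfrak{B}$ is a generalized computation table for $M$ on some generalized input, and by $\chi$ this input is a finite word $x$ at standard positions of row $0$. Since generalized computation tables are built on infinite discrete orders, the standard rows $0,1,2,\dots$ all exist and each has a $V$-predecessor, so the local-consistency clauses of $\theta_M$ together with the determinism of $M$ force the content of row $k$ to be the configuration of $M$ on $x$ at time $k$. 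Were $M$ to halt on $x$, a halting state would appear in one of these rows, contradicting $\eta_M$; hence $M$ loops on $x$ and $M\in\mathrm{coTOT}$. Combining this with the upper bound yields $\Sigma_2^0$-completeness.

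The main obstacle will be the gadget $\chi$: ``row $0$ has only finitely many non-blank cells'' is not expressible in $\mathrm{FO}$ but is expressible in $\lfo$ through bounded looping, and this is precisely where the second quantifier alternation enters the problem. One must argue carefully that bounding the number of backward horizontal steps needed to reach the origin really confines the input to the standard part of the (possibly non-standard) horizontal order, rather than merely below some limit element; once this is in place, the two directions of the reduction are routine, as is the upper bound given Theorem~\ref{approximationsequivalent} and Lemma~\ref{tablelemma}.
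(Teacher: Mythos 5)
Your proof is correct and follows essentially the same route as the paper: the upper bound via ``$\varphi$ is satisfiable iff some approximant $\Phi_\varphi^n$ is satisfiable'' together with the $\Pi_1^0$-ness of $\mathrm{FO}$-satisfiability, and the lower bound via an $\mathrm{FO}$-definable generalized computation table conjoined with an $\mathrm{FO}$ clause forbidding halting and an $\lfo$ gadget that uses the uniform clock bound to confine the row-zero input to a finite standard word. The only difference is cosmetic: you reduce directly from the $\Sigma_2^0$-complete non-totality problem, whereas the paper reduces from finiteness of $L(M)$ by first constructing an auxiliary machine $M^*$ whose divergence on some input $1^n$ is equivalent to finiteness of $L(M)$ --- that is, the paper internally converts its starting problem into precisely the ``exists an input on which the machine diverges'' instance that you take as your starting point.
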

\begin{proof}
    We prove the upper bound first. By Lemma \ref{approximationsequivalent}, a given sentence $\varphi$ of $\lfo$ is satisfiable iff there exists some $n$ such that the approximant $\Phi_\varphi^n$ is satisfiable. Therefore, and since the satisfiability problem for $\fo$ is in $\Pi_1^0$, satisfiability for $\lfo$ can be
    defined in $\Sigma_2^0$.

Next we shift our attention to the lower bound.
Given a Turing machine $M$, we use $L(M)$ to denote the set of strings accepted by $M$. The following problem is known to be $\Sigma_2^0$-hard \cite{KOZENBOOK}: \emph{given a Turing machine $M$, determine whether $L(M)$ is finite}. To prove the $\Sigma_2^0$-hardness of the satisfiability problem of $\lfo$, we give a recursive mapping $M \mapsto \varphi_M$ such that the $\lfo$-sentence $\varphi_M$ is satisfiable iff $L(M)$ is finite.

    Fix a Turing machine $M$. Without loss of generality, we may assume that the vocabulary of $M$ is $\{0,1\}$. To simplify the construction of $\varphi_M$, instead of $M$ we consider a Turing machine $M^*$ which, when given
    \[1^n = \underbrace{1 \dots 1}_{\text{$n$-times}},\]
    as an input a string,  
    does the following.
    \begin{itemize}
    \item
    $M^*$ begins the process of enumerating the set of binary strings of length at least $n$.
    \item
    During the enumeration process, $M^*$ halts if it encounters a string accepted by $M$.
    \end{itemize}
    Clearly
    \[L(M) \cap \{0,1\}^{\geq n} = \varnothing\ \Leftrightarrow\ M^* \text{ does not halt on the input $1^n$}.\]
    Thus
    \[L(M) \text{ is finite}\ \Leftrightarrow\ \text{for some $n\geq 0$, $M^*$ does not halt on the input $1^n$.}\]
    The sentence $\varphi_M$ will express the right
    hand side of this equivalence.

The sentence $\varphi_{M}$ will be true in precisely 
those models that are isomorphic to 
some generalized
computation table $\mathfrak{T}_{M^*,1^n}$ that
encodes a non-halting computation with the input being a finite
string on bits $1$. 
Now, by Lemma \ref{tablelemma}, there
exists an $\mathrm{FO}$-formula $\psi_{M^*}$
that defines the class of generalized computation tables
for $M^*$. The formula $\varphi_M$ will be a conjunction
$$\psi_{M^*} \wedge \psi' \wedge \chi$$
such that the following conditions hold.
\begin{enumerate}
    \item 
    $\psi'$ is an $\mathrm{FO}$-sentence making
    sure that the computation of $M^*$ does not halt.
    %
    %
    %
    \item
    $\chi$ is a sentence of $\lfo$ asserting
    that there exists some $n\in \mathbb{N}$ such that the input to the computation is $1^n$.
\end{enumerate}
While writing $\psi'$ in $\mathrm{FO}$ is straightforward, 
the attempt to define $\chi$ in $\mathrm{FO}$ (rather than $\lfo$)
runs into the challenge of specifying that only
finite input strings $1^n$ (of
arbitrary lengths) are allowed. Thereby we resort to the expressive capacities of $\lfo$ for defining $\chi$. 
Now, to define $\chi$, Recall that $P_0$ 
and $P_1$ are unary predicates that
encode the bits $0$ and $1$,
respectively. We first give the following auxiliary formulas. 
    \begin{itemize}
        \item
        $\mathit{column\_zero}(x)\ :=\ \neg\exists y H^*yx$
        \item
        $\mathit{row\_zero}(x)\ :=\ \neg\exists y V^*yx$
        \item
        $\chi_0\ :=\ \neg\exists x(\mathit{row\_zero}(x) \wedge P_0(x))$
    \end{itemize}
    We then define the formula $\chi$ as follows.
    \begin{align*}
        \chi\ 
    :=\ \chi_0\ \wedge\ \forall x\bigl(\ & \neg \mathit{row\_zero}(x)
    \vee\ \neg P_1(x)\\
    \vee\ &L\bigl(\mathit{column\_zero}(x)\ \vee\ 
    \exists y(Hyx \wedge \exists x(x = y \wedge C_L))\bigr).
    \end{align*}
    It is easy to show that $\varphi_M$ is as required.
\end{proof}

\section{Results on $\ulfo$}

In this section we investigate $\ulfo$.
We first list some interesting properties that are definable in $\ulfo$.

\begin{proposition}\label{ulfoexample}
    The following classes of models are definable in $\ulfo$.
    \begin{enumerate}
        \item The class of connected graphs (i.e., graphs such that for all vertices $u$ and $v\not= u$, there is a directed path from $u$ to $v$). We do not limit attention to finite graphs here. 
        \item The class of well-founded linear order
        structures.
    \end{enumerate}
\end{proposition}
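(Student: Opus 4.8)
The plan is to exhibit explicit $\ulfo$-sentences for each of the two classes and argue, via the unbounded evaluation game $\mathcal{G}_\infty$, that Eloise has a winning strategy exactly on the intended models. The crucial point distinguishing $\ulfo$ from $\lfo$ here is that in $\mathcal{G}_\infty$ there is no clock: Eloise can keep looping through a claim symbol forever, and an infinite play is won by \emph{neither} player. This means a looping atom under Eloise's control is useful to her only if she can eventually escape the loop into a true $\mathrm{FO}$-atom; conversely, a looping atom reached under $-$ polarity (so that it is ``Abelard's to escape'') lets Eloise win precisely by forcing Abelard into an infinite regress from which he has no successful exit --- i.e. by exploiting well-foundedness.

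For the first claim, connectivity, I would reuse the ``moving pebble along a path'' idea from $\varphi_{cycle}$ and $\varphi_{diam}$ in Proposition~\ref{prop:lfo-expressive-power-examples}, but now relying on unbounded semantics so that the path length need not be bounded. Concretely, take
$$\varphi_{conn}\ :=\ \forall x\forall y\bigl(x=y\ \vee\ L\bigl(Exy\ \vee\ \exists z(Exz\wedge\exists x(x=z\wedge C_L))\bigr)\bigr).$$
In $\mathcal{G}_\infty(\mathfrak{A},\varphi_{conn})$ Abelard picks $u,v$; if $u=v$ Eloise wins immediately, otherwise Eloise must, at each pass through $L$, either verify $Exy$ (an edge directly to $v$) or pick an intermediate $z$ with an edge from the current point, reset $x:=z$, and loop. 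If $v$ is reachable from $u$ she follows a fixed directed path and wins after finitely many loops by verifying the final edge; if $v$ is not reachable, every choice keeps her inside the non-reachable part, she can never truthfully assert an edge to $v$, and any play either ends in a false $\mathrm{FO}$-atom (Abelard wins) or loops forever (no one wins), so Eloise has no winning strategy. Hence $\mathfrak{A}\models\varphi_{conn}$ iff $\mathfrak{A}$ is connected.

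For the second claim, well-foundedness of a linear order, the sentence should first say (in $\mathrm{FO}$) that $<$ is a strict linear order, and then forbid infinite descending chains by a looping construction under negative polarity so that Eloise wins exactly when Abelard is forced into an infinite descent that never reaches a minimal obstruction. A natural candidate is
$$\varphi_{wf}\ :=\ \psi_{\mathrm{lin}}\ \wedge\ \neg\exists x\, L\bigl(\exists y(y<x\wedge \exists x(x=y\wedge C_L))\bigr),$$
where $\psi_{\mathrm{lin}}$ is the first-order axiomatisation of strict linear orders. Reading the game: after Eloise verifies $\psi_{\mathrm{lin}}$, the play enters the negated conjunct with swapped roles, so Abelard picks the starting element $x$, and then at each pass through $L$ Abelard must pick some $y<x$ and reset $x:=y$. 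If $(A,<^A)$ is well-founded, Abelard's choices form a strictly descending chain, which must terminate; when he can no longer pick $y<x$ he is stuck at the $\exists y$ move under $+$-to-him reading --- i.e. Eloise, playing the role that started at $(\neg\ldots,-)$, wins because Abelard runs out of witnesses and loses the $\exists$-move; if $(A,<^A)$ has an infinite descending chain, Abelard follows it forever, the play is infinite, nobody wins, so Eloise has no winning strategy. Thus $\mathfrak{A}\models\varphi_{wf}$ iff $\mathfrak{A}$ is a well-founded linear order.

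The main obstacle I anticipate is getting the polarity bookkeeping exactly right in $\varphi_{wf}$: one must verify that after the leading $\neg$ the sub-game is played with Abelard in the ``Eloise role'' (so it is genuinely Abelard who is forced to supply the descending witnesses and who loses when he cannot), and that the inner $\exists x(x=y\wedge C_L)$ re-assignment trick behaves correctly under the $-$ polarity, where $\exists$ becomes an Abelard move and $\wedge$ becomes an Eloise move, so Eloise can hold Abelard to the intended substitution $x:=y$. A secondary, routine obstacle is the $\mathrm{FO}$-undefinability remarks: for connectivity one uses an Ehrenfeucht--Fra\"iss\'e argument (e.g.\ $\mathfrak{G}_1$ versus $\mathfrak{G}_1\sqcup\mathfrak{G}_1$ as in the proposition just above), and for well-foundedness one compares $\omega$ with $\omega+(\omega^*+\omega)$; but since the proposition only claims definability in $\ulfo$, these non-$\mathrm{FO}$-definability observations are not strictly needed for the proof and I would mention them only in passing.
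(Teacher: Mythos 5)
Your proposal is correct and follows essentially the same route as the paper: the connectivity sentence is literally the one used there, and your well-foundedness sentence is the de Morgan dual of the paper's $\forall x\, L\,\forall y(\neg\, y<x \vee \forall x(\neg\, x=y \vee C_L))$, encoding the identical game in which Abelard is forced to produce a strictly descending chain and loses once it terminates, while an infinite descent yields a drawn play won by neither player. One small wording fix: at the $\exists y$ move under $-$ polarity Abelard never literally ``runs out of witnesses'' (he can always pick some element of the nonempty domain); rather, once the current value of $x$ is minimal, any choice of $y$ makes $y<x$ false and Eloise wins by selecting that conjunct --- which is exactly the mechanism you describe correctly elsewhere in the argument.
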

\begin{proof}
    To establish the first claim, we notice that clearly the following sentence of $\ulfo$ is true in a graph $(V,E)$ iff the graph is connected:
    $$\forall x \forall y \bigl(x=y\ \vee\ L(Exy \lor \exists z (Exz \land \exists x (x = z \land C_L)))\bigr).$$
    %
    %
    %

    To establish the second claim of the theorem, we claim that the sentence
    $$\forall x L \forall y \bigl(\neg y < x\, \vee\, \forall x(\neg x = y \lor C_L)\bigr)$$
    of $\ulfo$ is true in a linear order structure $(A,<)$ iff the linear order is well-founded.
    Indeed, it is easy to see that if $(A,<)$ is well-founded, then the game can always be won by Eloise. On the other hand, if $(A,<)$ is not well-founded, then Abelard can force the play of the game to last for infinitely many rounds.
\end{proof}

\subsection{The validity problem of $\ulfo$}

Our next aim is to prove that the set of validities of $\ulfo$ is recursively enumerable. As in the case of $\lfo$, our arguments make extensive use of approximants introduced for $\lfo$ (as opposed to $\ulfo$). As the following example demonstrates, we will have to use these approximants in a slightly more sophisticated way.

\begin{example}
Consider the sentence $$\varphi = \forall x L \forall y \bigl(\neg y < x\, \vee\, \forall x(\neg x = y \lor C_L)\bigr)$$ from Proposition \ref{ulfoexample}. The proposition
shows that this sentence is true in a
linear order $(A,<)$
according to $\ulfo$ iff the linear
order is well-founded. Now, the sentence $\neg \Phi_{\varphi}^n$, where $\Phi_{\varphi}^n$ is the $n$-approximant of $\varphi$, is true on a linear order structure $(A,<)$ iff $<$ has a decreasing sequence of at least $n+1$ nodes. Hence $(\mathbb{N},<)$ is a model for the set $\{\varphi\} \cup \{\neg \Phi_{\varphi}^n\ |\ n \in \mathbb{N}\}$
according to $\ulfo$. 
\end{example}

The above example demonstrates that there are
sentences $\varphi$ of the logic $\ulfo$ and models $\mathfrak{A}$ for which Eloise has a winning strategy in the game $\mathcal{G}_\infty(\mathfrak{A},\varphi)$, but not in any of the games $\mathcal{G}_n(\mathfrak{A},\varphi)$. Nevertheless, we can still establish the following compactness-like property for $\ulfo$.

\begin{lemma}\label{compactness}
    Let $\varphi$ be a formula of $\ulfo$. Suppose that for every $n\in \mathbb{N}$, there exists a model $\mathfrak{A}_n$ and an assignment $s_n$ so that Eloise does not have a winning strategy in the game $\mathcal{G}_n(\mathfrak{A}_n,s_n,\varphi)$. Then there exists a model $\mathfrak{A}$ and an assignment $s$ so that Eloise does not have a winning strategy in the game $\mathcal{G}_\infty(\mathfrak{A},s,\varphi)$.
\end{lemma}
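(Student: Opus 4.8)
The plan is to build the desired model $\mathfrak{A}$ and assignment $s$ by a compactness argument applied to first-order approximants, combined with a careful tracking of Eloise's (non-)winning strategies in the infinite-duration game. First I would reduce the statement about $\mathcal{G}_n$ to a statement about approximants: by Lemma \ref{approximationswork}, Eloise has no winning strategy in $\mathcal{G}_n(\mathfrak{A}_n,s_n,\varphi)$ iff $\mathfrak{A}_n,s_n \not\models \Phi_\varphi^n$, i.e. $\mathfrak{A}_n,s_n \models \neg\Phi_\varphi^n$. So the hypothesis says each $\neg\Phi_\varphi^n$ is satisfiable. Since $\neg\Phi_\varphi^n \models \neg\Phi_\varphi^{n'}$ for every $n' \le n$ by Lemma \ref{indexmonotonicity}, every finite subset of $\Sigma := \{\neg\Phi_\varphi^n \mid n\in\mathbb{N}\}$ is satisfiable, hence by the compactness theorem for $\mathrm{FO}$ the whole set $\Sigma$ has a model $\mathfrak{A},s$. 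Thus $\mathfrak{A},s \models \neg\Phi_\varphi^n$ for all $n$, which by Lemma \ref{approximationswork} means Eloise has no winning strategy in $\mathcal{G}_n(\mathfrak{A},s,\varphi)$ for any $n\in\mathbb{N}$.

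The remaining and genuinely new step is to pass from ``no winning strategy in any bounded game $\mathcal{G}_n(\mathfrak{A},s,\varphi)$'' to ``no winning strategy in the unbounded game $\mathcal{G}_\infty(\mathfrak{A},s,\varphi)$''. Here is where the example just before the lemma warns us that the implication is not trivial: the single model $\mathfrak{A}$ must do all the work simultaneously. I would argue by contraposition: suppose Eloise does have a (positional, by Proposition \ref{positionalityall}) winning strategy $\sigma$ in $\mathcal{G}_\infty(\mathfrak{A},s,\varphi)$. The key observation is that in $\mathcal{G}_\infty$, since $\sigma$ is winning, every $\sigma$-play is finite and ends in an $\mathrm{FO}$-atom won by Eloise; by König's lemma applied to the tree of $\sigma$-conforming plays (finitely branching after fixing $\sigma$'s choices — Abelard's choices at $\forall$/$\wedge$ positions being the only branching, and element choices can be handled because only finitely many distinct positions of the form $(C_L,\cdot,\cdot)$ are ever revisited), there is a uniform bound $N$ on the length of all $\sigma$-plays, hence a uniform bound on the number of jumps from claim symbols back to label symbols. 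But then $\sigma$ restricted to the game $\mathcal{G}_N(\mathfrak{A},s,\varphi)$ — where the clock starts at $N$ — is still winning, since no $\sigma$-play ever needs more than $N$ jumps and therefore never reaches a position with clock value $0$. This contradicts the fact established above that Eloise has no winning strategy in any $\mathcal{G}_n(\mathfrak{A},s,\varphi)$.

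The main obstacle is making the König's lemma / uniform bound argument rigorous, because the game arena is in general infinitely branching (Abelard can choose from an infinite domain $A$ at universal quantifier positions). The fix is to observe that what matters for bounding the clock is not the total play length but the maximal number of looping jumps along any $\sigma$-play, and that this quantity is controlled by the structure of the syntax tree of $\varphi$ together with $\sigma$: once $\sigma$ is winning, consider the ``reduced'' game tree whose nodes are labelled only by the subformula component and the polarity (forgetting the assignment); this tree is finitely branching in the relevant sense because $\varphi$ has finitely many subformulas, and a winning $\sigma$ cannot allow an infinite descending branch (that would be an infinite $\sigma$-play, contradicting that $\sigma$ is winning). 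Hence there is a finite bound on the number of $C_L$-to-$L$ jumps, giving the required $N$. Alternatively, and perhaps more cleanly, I would invoke Proposition \ref{positionalityall} to take $\sigma$ positional, note that a positional winning strategy induces a well-founded ``rank'' function on reachable positions (rank $=$ length of the longest $\sigma$-play from that position), and argue this rank is finite and bounded on the finitely many syntactically distinct looping positions, so that the initial position has finite rank $N$; then $\sigma$ witnesses a win in $\mathcal{G}_N$, the desired contradiction.
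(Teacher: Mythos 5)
Your first step (compactness applied to $\{\neg\Phi_\varphi^n \mid n\in\mathbb{N}\}$) is fine, but the second step contains a fatal gap: the implication ``Eloise has no winning strategy in $\mathcal{G}_n(\mathfrak{A},s,\varphi)$ for any $n$'' $\Rightarrow$ ``Eloise has no winning strategy in $\mathcal{G}_\infty(\mathfrak{A},s,\varphi)$'' is simply false for a fixed model, and the paper's example immediately preceding the lemma is a counterexample. Take $\varphi = \forall x L \forall y(\neg y<x \vee \forall x(\neg x=y\vee C_L))$ and $\mathfrak{A}=(\mathbb{N},<)$: Eloise wins $\mathcal{G}_\infty$ because the order is well-founded, yet $(\mathbb{N},<)\models\neg\Phi_\varphi^n$ for every $n$ because there are descending sequences of every finite length, so she wins no bounded game. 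Your contraposition argument breaks exactly here: a winning strategy $\sigma$ for Eloise in $\mathcal{G}_\infty$ guarantees that every $\sigma$-play is finite, but it does \emph{not} yield a uniform bound on the number of looping jumps, because the arena is infinitely branching at Abelard's quantifier moves. K\"onig's lemma does not apply, and your proposed fixes do not repair this. The ``reduced tree'' on (subformula, polarity) pairs is not a tree at all once looping atoms are present (it has cycles), and collapsing assignments destroys the information that distinguishes plays of different lengths. The rank function on positions is indeed well-founded under a winning $\sigma$, but it is not bounded on the looping positions: those positions carry assignments, of which there are infinitely many, and in the example above the position reached after Abelard picks $x=n$ has rank roughly $n$.

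The paper's actual proof circumvents this by never trying to transfer non-winning from the bounded games to the unbounded game over the \emph{same} model. Instead it writes down a first-order theory over an expanded vocabulary: predicates $N_{(\psi,\#)}$ axiomatized so that their tuples encode positions from which Abelard can avoid losing, together with an auxiliary discrete order (with minimum $d$ and successor $f$) whose elements serve as clock values, plus axioms forcing that order to contain an infinite ascending chain $d, f(d), f(f(d)),\dots$. Each finite fragment of this theory is satisfied by expanding $\mathfrak{A}_n$ using Abelard's positional non-losing strategy in $\mathcal{G}_n(\mathfrak{A}_n,\varphi)$ (which exists by positional determinacy of safety games), and FO compactness then yields a single model from which one reads off a structure $\mathfrak{A}$ and a non-losing strategy for Abelard in the \emph{unbounded} game $\mathcal{G}_\infty(\mathfrak{A},\varphi)$. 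The essential idea you are missing is that Abelard's ability to survive must itself be encoded into the first-order theory before compactness is applied; compactness on the approximants alone produces a model with too little structure to conclude anything about $\mathcal{G}_\infty$.
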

\begin{proof}
We may assume that the domains of the 
assignments contain precisely the set of 
variables occurring in $\varphi$, free
and bound. Thus, we can clearly even assume
that the assignments have the
domain $\varnothing$ by extending
the underlying vocabulary by 
finitely many constant symbols. We note that we
strictly speaking consider purely relational 
vocabularies, so these constants ultimately need to be
encoded by relation symbols. We shall below further
extend our vocabulary also by function symbols, and these also need to be encoded by relation symbols. All these 
encodings, however, are
straightforward, so we can make the related assumptions
without further discussion.
Now, as $s=\varnothing$, the
formula $\varphi$ is a sentence.
We may also assume that $\varphi$
makes no use of $\vee$ or $\forall$, as
these can be defined via using 
the operators $\neg$, $\wedge$, $\exists$.
Finally, we may assume, without loss of
generality, that $\varphi$ does not have 
free looping atoms, as
such an atom can be 
replaced by $LC_L$.

Now, suppose indeed that for every $n\in \mathbb{N}$, there exists a model $\mathfrak{A}_n$ so that Eloise does not have a winning strategy in the game $\mathcal{G}_n(\mathfrak{A}_n,\varphi)$.
    Based on this, we will construct a countable first-order theory $\Sigma$ such that from any model of $\Sigma$, we can read off a model $\mathfrak{A}$ and a non-losing  strategy $\sigma$ for Abelard in $\mathcal{G}_\infty(\mathfrak{A},\varphi)$.
    This implies, in particular, that Eloise does not have a winning strategy in the game $\mathcal{G}_\infty(\mathfrak{A},\varphi)$. To show that the theory $\Sigma$ is satisfiable, we will use compactness of $\mathrm{FO}$.

    We will start defining $\Sigma$ by first specifying the underlying vocabulary $\tau$ which will be finite but extend the vocabulary of $\varphi$. So, we first put into $\tau$ every relation symbol occurring in $\varphi$. We assume that $\{x_1,\dots ,x_k\}$ is the set of all
    variables that occur in the sentence $\varphi$. 
    We then define the set
    \[S := \{N_{(\psi, \#)} \mid \psi \in \subf(\varphi), \# \in \{+,-\}\} \cup \{D,W, <\},\]
    of relation symbols,
    where each of the symbols $N_{(\psi,\#)}$ is a $(k+1)$-ary relation symbol, $<$ is a binary relation symbol and
    $W$ and $D$ are unary relation symbols.
    We add the symbols in $S$ into $\tau$. Note that the subscript $(\psi,\#)$ of $N_{(\psi,\#)}$ encodes partial information on positions of evaluation games for $\varphi$, namely, $(\psi,\#)$ lists the current subformula occurrence $\psi$ being played and $\#\in\{+,-\}$ encodes whether Eloise or Abelard is
    currently the verifier in the game.\footnote{Syntactically identical subformulas that are in different parts of $\varphi$ can be distinguished by some convention whose details are not of importance here.} Each predicate $N_{(\psi,\#)}$ has arity $k+1$, and the first $k$ elements of a tuple of $N_{(\psi,\#)}$ encode the current assignment in the evaluation game while the $(k+1)$st element will encode (together with $<$) a measure of how many times positions with claim atoms $C_L$ have been visited. The symbol $D$ will encode the domain of the models $\mathfrak{A}_n$ and $W$ the (disjoint) domain of $<$. The details will be formalized below. To conclude the definition of
    $\tau$, we will add the constant symbol $d$ and
    the unary function symbol $f$ to $\tau$.

    We then proceed with the definition of the theory $\Sigma$. First we need a sentence
    saying that $<$ is a discrete linear order over the set encoded by the unary predicte $W$, and that $d$ is the minimum element of this ordering. We will also need the sentence
    \begin{align*}
        \forall x \bigl(\exists y (x < y)\ 
        \to\ \bigl(x < f(x) 
        \land \forall y (x < y \to (f(x) < y \lor y = f(x)))\bigr)\bigr)
    \end{align*}
    which expresses that $f$ maps each element to its immediate successor with respect to $<$, if such an element exists. 
    Furthermore, we need a sentence ensuring that the intersection of $D$ and $W$ is empty.

    We then write axioms for the relations $N_{(\psi,\#)}$. The intuition is that tuples in the interpretation of $N_{(\psi,\#)}$ encode non-losing positions for Abelard in the evaluation game, i.e., positions that are not winning for Eloise. Thus the axioms of $\Sigma$ will encode natural safety restrictions on the evaluation game.

    First, for every $\mathrm{FO}$-atom $\alpha := \alpha(x_{i_1},\dots ,x_{i_j})$
    with variables in $\{x_1,\dots ,x_k\}$, where $(x_{i_1},\dots ,x_{i_j})$ lists the variables of $\alpha$, we add the sentences
    \[\forall x_1 \dots x_k \forall c (N_{(\alpha,+)}(x_1, \dots ,x_k,c) \to \neg \alpha(x_{i_1}, \dots ,x_{i_j}))\]
    and
    \[\forall x_1\dots x_k \forall c (N_{(\alpha, -)}(x_1, \dots ,x_k,c) \to \alpha(x_{i_1}, \dots ,x_{i_j}))\]
    to the theory $\Sigma$. Such sentences guarantee that $N_{(\alpha,+)}$ and $N_{(\alpha,-)}$ will not contain positions involving atoms such that Abelard directly loses the
    evaluation game in those positions. For every subformula $\psi \land \chi$, we add to $\Sigma$ the formulas
    \begin{align*}
        \forall x_1\dots x_k\forall c & (N_{(\psi \land \chi, +)}(x_1,\dots ,x_k,c)\\
        & \to (N_{(\psi,+)}(x_1,\dots ,x_k,c) \vee N_{(\chi,+)}(x_1,\dots ,x_k,c)))
    \end{align*}
    and
    \begin{align*}
    \forall x_1\dots x_k\forall c & (N_{(\psi \land \chi, -)}(x_1,\dots,x_k,c)\\
    & \to (N_{(\psi,-)}(x_1,\dots,x_k,c) \wedge N_{(\chi,-)}(x_1,\dots,x_k,c))).
    \end{align*}
    For every subformula $\exists x_i \psi$, we add the following formulas
    \begin{align*}\forall x_1\dots x_k \forall c &   (N_{(\exists x_i \psi, +)}(x_1,\dots,x_k,c)\\
    & \to \forall x_i (
    D(x_i) \to 
    N_{(\psi,+)}(x_1,\dots,x_k,c)))
    \end{align*}
    and
    \begin{align*}\forall x_1\dots x_k \forall c & (N_{(\exists x_i \psi, -)}(x_1,\dots,x_k,c)\\
    & \to
    \exists x_i (
    D(x_i) \land
    N_{(\psi,-)}(x_1,\dots , x_k,c)))
    \end{align*}
    to the theory $\Sigma$. To cover negation, we
    add to $\Sigma$ the formulas
        \begin{align*}\forall x_1\dots x_k \forall c   (N_{(\neg \psi, +)}(x_1,\dots,x_k,c)
     \to (
    N_{(\psi,-)}(x_1,\dots,x_k,c)))
    \end{align*}
    and
    \begin{align*}\forall x_1\dots x_k \forall c  (N_{(\neg \psi, -)}(x_1,\dots,x_k,c)
    \to (
    N_{(\psi,+)}(x_1,\dots , x_k,c))).
    \end{align*}

    We will also need two axioms for claim symbols $C_L$. For every subformula $C_L$ with
    the reference formula $L\psi$, we add to $\Sigma$, for every $\# \in \{+,-\}$, the formula
    \begin{align*}
        \forall x_1 \dots x_k \forall c &   \bigl((N_{(C_L,\#)}(x_1,\dots ,x_k,c)
        \wedge \exists y (c < y))\\
        & \rightarrow\ \exists z (W(z) \wedge f(c) = z \wedge N_{(L \psi, \#)}(x_1,\dots,x_k,z))\bigr).
    \end{align*}
    Similarly, for every subformula $L\psi$
    and $\#\in \{+,-\}$, we add to $\Sigma$ the formula
    \[\forall x_1\dots x_k \forall c (N_{(L \psi, \#)}(x_1,\dots,x_k,c) \to N_{(\psi,\#)}(x_1,\dots,x_k,c)).\]

    We will also need the following formula which encodes the initial position of the game.
    \[\exists x_1\dots x_k N_{(\varphi,+)}(x_1,\dots,x_k,d).\]

    Now, if $\Phi$ denotes the (finite) set of axioms that we have listed above, then as our theory $\Sigma$ we choose the following set
    \[\Phi \cup \{\theta_n^d \mid n\in \mathbb{N}\}\]
    where each $\theta_n^d$ expresses
    that $W$ has at least $n$ elements $u$ such that $d  < u$. Therefore, in any model of $\Sigma$, the sequence 
    $$d, f(d), f(f(d)), \dots $$
    is an infinite ascending sequence of
    elements with 
    respect to $<$.

    If $\Sigma$ is satisfiable in a model $\mathfrak{A}^+$, then, in the submodel of $\mathfrak{A}^+$ induced by the set $D^{\mathfrak{A}^+}$, Abelard can survive the semantic game for $\varphi$ without losing. This submodel (or its reduct to the vocabulary of $\varphi$) is then the model $\mathfrak{A}$ required by the current theorem.

    To show that $\Sigma$ is indeed satisfiable, it suffices, by compactness of $\mathrm{FO}$,
    to show that each finite subset of $\Sigma$ is satisfiable. To this end, it clearly suffices to show that for every $n \in \mathbb{N}$, the set
    \[\Sigma_n := \Phi \cup \{\theta_m^d \mid 1\leq m \leq n\}\]
    is satisfiable. Now, we have assumed that there exists a model $\mathfrak{A}_n$ for each $n\in\mathbb{N}$ such that Eloise does not have a winning strategy in the game $\mathcal{G}_n(\mathfrak{A}_n,\varphi)$. 
    Thus, by Theorem \ref{reachabilitygametheorem}, Abelard has a positional non-losing strategy in $\mathcal{G}_n(\mathfrak{A}_n,\varphi)$. We create a model of $\Sigma_n$ as follows.
    \begin{enumerate}
        \item
        We take a copy of $\mathfrak{A}_n$ and interpret $D$ to correspond to the domain of $\mathfrak{A}_n$.
        \item
        We take a disjoint (from $D$) set of size at least $n+1$ and interpret $W$ to correspond to that set. We also interpret the symbols $<$, $d$, $f$ over the set $W$ so that $d$ becomes the minimum element of the order $<$ and $f$ runs step by step from $d$
        upwards along $<$.
        \item
        The predicates $N_{(\psi,\#)}$ are interpreted in the natural way according to the non-losing strategy of Abelard over $\mathfrak{A}_n$. Note that the $(k+1)$st elements of the tuples of $N_{(\psi,\#)}$ run
        along the order $<$.
    \end{enumerate}
Thereby we create a model for $\Sigma_n$ for each $n\in \mathbb{N}$, as required. 
\end{proof}

By the above lemma, if a formula of $\ulfo$ is valid, then already one of the approximants $\Phi_\varphi^n$ is. It is easy to see that also the converse holds.

\begin{lemma}\label{monotonicityinfinity}
    Let $\varphi$ be a formula of $\ulfo$, and let $\mathfrak{A}$ be a suitable model and $s$ a suitable assignment. If
    Eloise has a winning strategy in the game $\mathcal{G}_n(\mathfrak{A},s,\varphi)$, then she has a winning strategy in the game $\mathcal{G}_\infty(\mathfrak{A},s,\varphi)$.
\end{lemma}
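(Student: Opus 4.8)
The plan is to simulate a winning strategy from the bounded game $\mathcal{G}_n(\mathfrak{A},s,\varphi)$ inside the unbounded game $\mathcal{G}_\infty(\mathfrak{A},s,\varphi)$ by simply forgetting the clock component of positions. The two game trees are identical except that positions of $\mathcal{G}_n$ carry an extra clock value $m$ (starting at $n$ and decremented by one on every jump from a claim atom $C_L$ to its reference formula $\rfor(C_L)$), and $\mathcal{G}_n$ additionally terminates --- with neither player winning --- any play that would visit a position $(C_L,t,\#,0)$. Apart from this, the moves available to Abelard and Eloise, and the winning conditions at $\mathrm{FO}$-atoms, coincide in the two games.

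Concretely, fix a winning strategy $\sigma$ for Eloise in $\mathcal{G}_n(\mathfrak{A},s,\varphi)$. Define a strategy $\sigma_\infty$ for Eloise in $\mathcal{G}_\infty(\mathfrak{A},s,\varphi)$ as follows: given a partial play $p=(v_0,\dots,v_t)$ of $\mathcal{G}_\infty$ ending at an Eloise position, let $j$ be the number of jumps (steps of the form $C_L\to\rfor(C_L)$) occurring in $p$; if $j\le n$, form the lift $\hat p$ of $p$ by annotating each $v_i$ with clock value $n$ minus the number of jumps among its first $i$ steps, and let $\sigma_\infty(p)$ be the move prescribed by $\sigma(\hat p)$ (legal, since up to this point $\hat p$ is a valid prefix of a $\mathcal{G}_n$-play); otherwise let $\sigma_\infty(p)$ be arbitrary. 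The claim is then that every maximal play $p$ consistent with $\sigma_\infty$ is won by Eloise. One argues by induction along $p$ that each prefix lifts to a prefix of a $\mathcal{G}_n$-play consistent with $\sigma$: the only way the lift could fail to remain legal would be for $p$ to reach a position $C_L$ after exactly $n$ jumps (clock value $0$), but then the lifted prefix would already be a \emph{complete} $\mathcal{G}_n$-play consistent with $\sigma$ that is won by neither player, contradicting that $\sigma$ is a winning strategy for Eloise. Hence the lift stays legal throughout. Moreover $p$ is finite: its lift is a play of $\mathcal{G}_n$ consistent with a winning strategy for Eloise, hence a won --- and therefore terminating --- play of a reachability game (concretely, at most $n$ jumps occur and between jumps only finitely many steps through the syntax tree are possible). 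So the lift is a complete play of $\mathcal{G}_n$ consistent with $\sigma$, won by Eloise, which must end at an $\mathrm{FO}$-atom position $(\alpha,t,+)$ with $\mathfrak{A},t\models\alpha$ or $(\alpha,t,-)$ with $\mathfrak{A},t\not\models\alpha$; since $\mathcal{G}_\infty$ has the same winning condition at $\mathrm{FO}$-atoms, $p$, ending at the corresponding position $(\alpha,t,\#)$, is won by Eloise as well.

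The main obstacle is the bookkeeping in the lifting step, namely verifying that the reconstructed clock never runs out; the key observation that makes this painless is that this failure is impossible precisely because $\sigma$ is a \emph{winning} (not merely non-losing) strategy for Eloise, so it cannot produce a play that ends with neither player winning. Everything else is routine. As an alternative route one could go through Lemma~\ref{approximationswork}: a winning strategy for Eloise on the approximant $\Phi_{\varphi}^{n}$ never enters the $\bot/\top$ positions that replaced the innermost looping atoms, and can therefore be transported along the unfolding $\Psi_{\varphi}^{n}$ into $\mathcal{G}_\infty(\mathfrak{A},s,\varphi)$; but the direct simulation above seems the more transparent of the two.
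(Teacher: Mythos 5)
Your proof is correct and follows essentially the same route as the paper's: the paper likewise argues that Eloise can simulate her $\mathcal{G}_n$-strategy in $\mathcal{G}_\infty(\mathfrak{A},s,\varphi)$ by keeping track (non-positionally) of the number of jumps made at looping atoms, i.e., by pretending she is still playing $\mathcal{G}_n(\mathfrak{A},s,\varphi)$. Your write-up merely makes explicit the bookkeeping the paper leaves implicit, in particular the observation that the reconstructed clock cannot reach a claim atom at value $0$ because the winning strategy $\sigma$ admits no drawn plays.
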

\begin{proof}
    As the argument for Lemma \ref{monotonicity}, the proof is based on simulating strategies. Indeed, in $\mathcal{G}_\infty(\mathfrak{A},s,\varphi)$, by using a non-positional strategy, Eloise can keep track of the number of times the players have visited a position where the formula is a looping atom. In particular, she can thereby pretend that she is playing the game $\mathcal{G}_n(\mathfrak{A},s,\varphi)$. Thus, by Lemma \ref{positionalityall}, she also has a positional winning strategy in the game $\mathcal{G}_\infty(\mathfrak{A},s,\varphi)$.
\end{proof}

\begin{theorem}\label{thm:ulfo_validity}
    Let $\varphi$ be a formula of $\ulfo$. Now $\varphi$ is valid if and only if for some $n\in \mathbb{N}$,
    the approximant $\Phi_{\varphi}^n$ is valid.
\end{theorem}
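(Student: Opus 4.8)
The plan is to derive the theorem directly from the three preceding lemmas on approximants, treating each direction of the biconditional separately; neither direction requires new ideas, since all the work has been front-loaded into Lemma~\ref{compactness}.

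For the direction from right to left, I would argue as follows. Suppose $\Phi_{\varphi}^n$ is valid for some fixed $n\in\mathbb{N}$. Let $\mathfrak{A}$ be any model and $s$ any assignment suitable for $\varphi$ (hence also for $\Phi_{\varphi}^n$, since the approximant uses only the vocabulary of $\varphi$ and has the same free variables). Then $\mathfrak{A},s\models\Phi_{\varphi}^n$, so Eloise has a winning strategy in the first-order evaluation game $\mathcal{G}(\mathfrak{A},s,\Phi_{\varphi}^n)$. By Lemma~\ref{approximationswork}, Eloise then has a winning strategy in $\mathcal{G}_n(\mathfrak{A},s,\varphi)$, and by Lemma~\ref{monotonicityinfinity} she has a winning strategy in $\mathcal{G}_\infty(\mathfrak{A},s,\varphi)$, i.e.\ $\mathfrak{A},s\models\varphi$. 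Since $\mathfrak{A}$ and $s$ were arbitrary, $\varphi$ is valid.

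For the direction from left to right I would prove the contrapositive. Assume that no approximant $\Phi_{\varphi}^n$ is valid. Then for each $n\in\mathbb{N}$ there is a suitable model $\mathfrak{A}_n$ and assignment $s_n$ with $\mathfrak{A}_n,s_n\not\models\Phi_{\varphi}^n$, so Eloise has no winning strategy in $\mathcal{G}(\mathfrak{A}_n,s_n,\Phi_{\varphi}^n)$; by Lemma~\ref{approximationswork} again, Eloise has no winning strategy in $\mathcal{G}_n(\mathfrak{A}_n,s_n,\varphi)$. This is precisely the hypothesis of Lemma~\ref{compactness}, which yields a model $\mathfrak{A}$ and assignment $s$ for which Eloise has no winning strategy in $\mathcal{G}_\infty(\mathfrak{A},s,\varphi)$, that is, $\mathfrak{A},s\not\models\varphi$. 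Hence $\varphi$ is not valid, completing the contrapositive.

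There is essentially no obstacle internal to this theorem: the only subtle point is making sure suitability transfers between $\varphi$ and $\Phi_{\varphi}^n$ (which is immediate from the definition of the $n$th approximant), and the real content---the compactness-style argument of Lemma~\ref{compactness} that builds an infinite-game countermodel by an $\mathrm{FO}$-compactness encoding of Abelard's bounded non-losing strategies---has already been established. So the proof is just a two-line chaining of Lemmas~\ref{approximationswork}, \ref{monotonicityinfinity}, and~\ref{compactness}, and I would present it as such without further elaboration.
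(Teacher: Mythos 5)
Your proposal is correct and follows exactly the paper's own route: the paper likewise proves this theorem by directly chaining Lemmas~\ref{approximationswork}, \ref{monotonicityinfinity} and \ref{compactness}, with the right-to-left direction via approximants plus monotonicity and the left-to-right direction via the contrapositive through the compactness lemma. Your write-up merely makes explicit the steps the paper leaves implicit (including the observation that Lemma~\ref{approximationswork}, though stated for $\lfo$, applies here since the $n$-bounded games are the same for both logics).
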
 
\begin{proof}
    The claim follows directly from Lemmas \ref{approximationswork}, \ref{monotonicityinfinity} and \ref{compactness}.
    Note that formally \ref{approximationswork} refers to $\lfo$ but is in fact independent of the the difference between $\ulfo$ and $\lfo$.
\end{proof}

\begin{corollary}\label{ulfore}
    The set of valid sentences of $\ulfo$ is recursively enumerable.
\end{corollary}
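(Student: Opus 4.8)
The plan is to derive this immediately from Theorem \ref{thm:ulfo_validity} together with the classical fact (Gödel's completeness theorem) that the set of valid first-order sentences is recursively enumerable. By Theorem \ref{thm:ulfo_validity}, a sentence $\varphi$ of $\ulfo$ is valid if and only if there exists some $n\in\mathbb{N}$ such that the approximant $\Phi_\varphi^n$ is valid, and each $\Phi_\varphi^n$ is an $\mathrm{FO}$-sentence. Hence validity of $\varphi$ is equivalent to an existential statement over $\mathbb{N}$ whose matrix is an r.e.\ predicate, which yields an r.e.\ predicate overall.

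First I would observe that the construction $(\varphi,n)\mapsto\Phi_\varphi^n$ is effective. Indeed, a regularisation $\varphi'$ of $\varphi$ can be computed by a fixed systematic renaming procedure (as already assumed in Definition \ref{unfoldingdefinition}); the $n$th unfolding $\Psi_\varphi^n$ is then obtained by $n$ rounds of textually replacing each non-free looping atom $C_L$ by its reference formula $\mathrm{Rf}(C_L)$, a purely syntactic and terminating operation; and $\Phi_\varphi^n$ is obtained from $\Psi_\varphi^n$ by deleting label symbols and replacing positive occurrences of looping atoms by $\bot$ and negative ones by $\top$, where polarity is determined by counting negations along the syntax tree. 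All of this is clearly computable.

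Then the semi-decision procedure is the obvious dovetailing: given $\varphi$, enumerate pairs $(n,k)\in\mathbb{N}\times\mathbb{N}$; for each pair, compute $\Phi_\varphi^n$ and run $k$ steps of a fixed semi-decision procedure for $\mathrm{FO}$-validity on input $\Phi_\varphi^n$; accept $\varphi$ as soon as some such run halts with ``valid''. By Theorem \ref{thm:ulfo_validity} this procedure accepts precisely the valid sentences of $\ulfo$, so that set is r.e.

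There is essentially no hard part here; the only thing to be careful about is the effectivity of the approximant construction, which is routine, and the appeal to Theorem \ref{thm:ulfo_validity}, whose proof (via Lemmas \ref{approximationswork}, \ref{monotonicityinfinity} and \ref{compactness}) has already been carried out. I would keep the corollary's proof to a single sentence pointing to these two ingredients, exactly as the excerpt does.
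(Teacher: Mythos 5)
Your proposal is correct and is exactly the argument the paper intends: the corollary is stated as immediate from Theorem \ref{thm:ulfo_validity} together with the recursive enumerability of $\mathrm{FO}$-validity, and your additional remarks on the effectivity of the map $(\varphi,n)\mapsto\Phi_\varphi^n$ and the dovetailing semi-decision procedure merely make explicit what the paper leaves implicit.
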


\begin{remark}\label{sameremark}
    Perhaps surprisingly, Theorems \ref{LFOVALIDITY} and \ref{thm:ulfo_validity} imply that the set of \emph{valid} sentences of $\lfo$ and $\ulfo$ coincide, i.e., $\varphi$ is valid with respect to bounded semantics iff it is valid with respect to unbounded semantics. This will be strengthened to concern first-order premise sets below when we discuss completeness.
\end{remark}

\subsection{Translating $\ulfo$ into $\uso$}

We now show how $\ulfo$ can be translated into universal second-order logic $\uso$. After that we will use this translation to determine the complexity of the validity problems of the two-variable fragments of $\lfo$ and $\ulfo$.

By $\ulfo^k$ and $\mathrm{FO}^k$ we mean, respectively, the $k$-variable fragments of $\ulfo$ and $\mathrm{FO}$. We start with the following result, which is also of independent interest.

\begin{theorem}\label{safetyexpressible}
    Let $\varphi(\overline{x}) \in \ulfo^k$ be a formula, and let $\tau$ denote the vocabulary of $\varphi(\overline{x})$. Then there exists a vocabulary $\tau' \supseteq \tau$ and a formula $\Psi(\overline{x})$ of $\fo^k$ with the following properties.
    \begin{enumerate}
        \item If $\mathfrak{A}$ is a $\tau$-model such that Eloise does not have a winning strategy in the game $\mathcal{G}_\infty(\mathfrak{A},s,\varphi)$, then $\mathfrak{A}$ can be expanded to a $\tau'$-model $\mathfrak{A}'$ so that $\mathfrak{A}',s \models \Psi(\overline{x})$.
        \item If $\mathfrak{A}'$ is a $\tau'$-model so that $\mathfrak{A},s \models \Psi(\overline{x})$, then Eloise does not have a winning strategy in the game $\mathcal{G}_\infty(\mathfrak{A} \upharpoonright \tau, s, \varphi)$.
    \end{enumerate}
    Furthermore, $\Psi(\overline{x})$ can be computed from $\varphi(\overline{x})$ in polynomial time.
\end{theorem}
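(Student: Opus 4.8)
The plan is to adapt the theory-construction from the proof of Lemma~\ref{compactness}, stripped of all the clock machinery (needed there only to interface with the bounded games) and recast so that the auxiliary predicates have arity $k$. Concretely, I would let $\tau'$ extend $\tau$ by a fresh $k$-ary relation symbol $N_{(\psi,\#)}$ for every subformula occurrence $\psi\in\subf(\varphi)$ and every $\#\in\{+,-\}$, with syntactically identical occurrences distinguished by a fixed convention. As in Lemma~\ref{compactness} I may assume without loss of generality that $\varphi$ has no free looping atoms (replace each free $C_L$ by $LC_L$) and that every variable occurring in $\varphi$ is among $x_1,\dots,x_k$. The intended meaning of $N_{(\psi,\#)}(\overline a)$ is: \emph{from the position $(\psi,s_{\overline a},\#)$ Abelard has a non-losing strategy in $\mathcal{G}_\infty$}, where $s_{\overline a}$ sends $x_i\mapsto a_i$.

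Next I would take $\Psi(\overline x)$ to be the conjunction of (a) one ``safety-closure'' sentence per connective, mirroring the evaluation-game rules exactly as the sentences of the set $\Phi$ in the proof of Lemma~\ref{compactness} do, but without the extra argument $c$ and without the symbols $<,f,W,D$: for instance $\forall x_1\cdots x_k\,(N_{(\alpha,+)}(x_1,\dots,x_k)\to\neg\alpha)$ and $\forall x_1\cdots x_k\,(N_{(\alpha,-)}(x_1,\dots,x_k)\to\alpha)$ for $\fo$-atoms $\alpha$, $\forall x_1\cdots x_k\,(N_{(\psi\wedge\chi,+)}(\overline x)\to(N_{(\psi,+)}(\overline x)\vee N_{(\chi,+)}(\overline x)))$ for conjunctions in positive polarity, $\forall x_1\cdots x_k\,(N_{(\exists x_i\psi,-)}(\overline x)\to\exists x_i\,N_{(\psi,-)}(\overline x))$ for existentials in negative polarity, $\forall x_1\cdots x_k\,(N_{(C_L,\#)}(\overline x)\to N_{(\rfor(C_L),\#)}(\overline x))$ and $\forall x_1\cdots x_k\,(N_{(L\psi,\#)}(\overline x)\to N_{(\psi,\#)}(\overline x))$ for looping atoms and label subformulas, together with the dual clauses; and of (b) the initial-position sentence $\exists x_{j_1}\cdots\exists x_{j_l}\,N_{(\varphi,+)}(x_1,\dots,x_k)$, where $x_{j_1},\dots,x_{j_l}$ enumerates the variables of $\{x_1,\dots,x_k\}$ not free in $\varphi$. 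Every such sentence lies in $\fo^k$, since each universal block ranges over $x_1,\dots,x_k$ and the only further quantifier ever used is a single $\exists x_i$ or $\forall x_i$ reusing one of those variables; as there are $O(|\varphi|)$ sentences, each of size $O(k+|\varphi|)$, the formula $\Psi(\overline x)$ is computable from $\varphi(\overline x)$ in polynomial time.

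For property~(1), suppose Eloise has no winning strategy in $\mathcal{G}_\infty(\mathfrak{A},s,\varphi)$. This game is a reachability game for Eloise, so by Theorem~\ref{reachabilitygametheorem} (and Proposition~\ref{positionalityall} for positionality) Abelard has a positional non-losing strategy; in particular the region $W$ of positions from which Abelard can avoid losing is well-defined, contains the starting position $(\varphi,s,+)$, and satisfies precisely the closure conditions encoded by the part-(a) sentences: at Abelard-owned positions some successor lies in $W$, at Eloise-owned positions every successor lies in $W$, the deterministic steps ($\neg$, $L\psi$, $C_L$) preserve membership, and a terminal $\fo$-atom position lies in $W$ iff Eloise does not win there. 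Interpreting $N_{(\psi,\#)}$ as $\{\overline a:(\psi,s_{\overline a},\#)\in W\}$ thus gives a $\tau'$-expansion $\mathfrak{A}'$ of $\mathfrak{A}$ with $\mathfrak{A}',s\models\Psi(\overline x)$. For property~(2), suppose $\mathfrak{A}',s\models\Psi(\overline x)$. The sets $N_{(\psi,\#)}^{\mathfrak{A}'}$ then form a region closed under Abelard's moves, stable under Eloise's moves, and containing $(\varphi,s,+)$; define an Abelard strategy in $\mathcal{G}_\infty(\mathfrak{A}'\upharpoonright\tau,s,\varphi)$ maintaining the invariant ``the current position $(\psi,t,\#)$ satisfies $t\in N_{(\psi,\#)}^{\mathfrak{A}'}$'', which the part-(a) sentences guarantee he can always keep at his choice points, Eloise cannot break at hers, and which the deterministic steps preserve. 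Every play consistent with this strategy then either runs forever, so Eloise does not win, or reaches an $\fo$-atom position $(\alpha,t,\#)$ with $t\in N_{(\alpha,\#)}^{\mathfrak{A}'}$, in which case the corresponding sentence forces $\alpha$ to take under $t$ the truth value that makes Abelard win that play (using that $\alpha$ is a $\tau$-atom, so its truth in $\mathfrak{A}'\upharpoonright\tau$ agrees with that in $\mathfrak{A}'$). Hence Eloise has no winning strategy.

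The only genuinely delicate points are the faithful $\fo$-formalization of the terminal and deterministic cases and the invariant-maintenance bookkeeping in~(2); since these simply track the game rules---exactly as in the proof of Lemma~\ref{compactness}, minus the clock---I expect them to be routine. The real content of the theorem, and what I would stress, is that for a $k$-variable formula the assignment-component of any game position fits into a single $k$-tuple, so a first-order sentence over the expanded vocabulary can describe Abelard's safety region without spending any additional variables, which is exactly what makes $\Psi$ land in $\fo^k$ rather than merely in $\fo$.
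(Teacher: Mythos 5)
Your proposal is correct and follows essentially the same route as the paper's own proof: introduce $k$-ary predicates $N_{(\psi,\#)}$ for Abelard's safety region, axiomatize the closure conditions of the evaluation game in $\fo^k$, and read the two directions off positional determinacy of reachability/safety games. In fact you supply more detail than the paper (the initial-position conjunct, the handling of variables not free in $\varphi$, and the invariant argument for direction (2)), all of which is consistent with the paper's sketch.
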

\begin{proof}
    Suppose that $\{x_1,\dots ,x_k\}$ contains the set of variables occurring in $\varphi(\overline{x})$, including the bound variables. The vocabulary $\tau'$ will contain, in addition to the relation symbols in $\varphi$, the relation symbols
    $$\{N_{(\psi, \#)} \mid \psi \in \subf(\varphi),\, \# \in \{+,-\}\}$$
    where each symbol $N_{(\psi,\#)}$ is $k$-ary. The proof is similar to the proof of Lemma \ref{compactness}: the relation symbols will be used to encode positions that are safe for Abelard, by which we mean that Eloise cannot force a win from such positions.
    
    The formula $\Psi(\overline{x})$ will be a conjunction of axioms for the relations $N_{(\psi,\#)}$ which intuitively speaking  encode ``safety conditions'' for Abelard in a natural way. For instance, for every subformula $\exists x_1 \psi$ of $\varphi(\overline{x})$, we use the formulas
    \[\forall x_1 \dots x_k (N_{(\exists x_1 \psi, +)}(x_1,\dots,x_k) \to \forall x_1 N_{(\psi, +)}(x_1,\dots,x_k))\]
    and
    \[\forall x_1 \dots x_k (N_{(\exists x_1 \psi, -)}(x_1,\dots,x_k) \to \exists x_1 N_{(\psi, -)}(x_1,\dots,x_k)).\]
    As an another example, for every subformula $C_L$ with reference formula $\mathrm{Rf}(C_L)$, we use for
    both $\# \in \{+,-\}$ the formula
    $$\forall x_1 \dots x_k (N_{(C_L,\#)}(x_1,\dots,x_k) \to N_{(L\psi,\#)}(x_1,\dots,x_k)).$$
    It is clear that such axioms can be written in $\fo^k$ for every pair $(\psi,\#)$.
\end{proof}

Theorem \ref{safetyexpressible} immediately yields a polynomial time algorithm for translating formulas of $\ulfo^k$ to formulas of $\uso^k$, i.e., to
formulas $\forall X_1\dots \forall X_n\, \psi$
where the first-order part $\psi$ is a
formula of $\mathrm{FO}^k$.

\begin{corollary}\label{cor:ulfo_contained_in_uso}
    Every formula of $\varphi(\overline{x}) \in \ulfo^k$ can be translated in polynomial time to an equivalent formula $\Psi(\overline{x}) \in \uso^k$.
\end{corollary}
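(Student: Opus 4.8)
My plan is to obtain the corollary as a purely formal consequence of Theorem \ref{safetyexpressible}, applied to $\varphi(\overline{x})$ and followed by a negation of the auxiliary formula it produces. Recall that Theorem \ref{safetyexpressible} yields a vocabulary $\tau'\supseteq\tau$ together with a formula of $\fo^k$ --- call it $\Theta(\overline{x})$ here, to avoid clashing with the $\Psi$ named in the present corollary --- such that, for every $\tau$-model $\mathfrak{A}$ and every suitable assignment $s$, the following are equivalent: (i) Eloise has no winning strategy in $\mathcal{G}_\infty(\mathfrak{A},s,\varphi)$; and (ii) some $\tau'$-expansion $\mathfrak{A}'$ of $\mathfrak{A}$ satisfies $\mathfrak{A}',s\models\Theta(\overline{x})$. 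Here (i)$\Rightarrow$(ii) is item~(1) of the theorem and (ii)$\Rightarrow$(i) is item~(2). Since, by definition of the semantics of $\ulfo$, $\mathfrak{A},s\models\varphi$ holds exactly when Eloise \emph{has} a winning strategy in $\mathcal{G}_\infty(\mathfrak{A},s,\varphi)$, negating the equivalence (i)$\Leftrightarrow$(ii) is precisely what the corollary needs.

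Concretely, I would let $\overline{N}$ enumerate the relation symbols of $\tau'\setminus\tau$ --- these are exactly the symbols $N_{(\psi,\#)}$ of the proof of Theorem \ref{safetyexpressible} --- and set
\[
\Psi(\overline{x})\ :=\ \forall\overline{N}\,\neg\Theta(\overline{x}).
\]
Since $\Theta(\overline{x})\in\fo^k$, so is $\neg\Theta(\overline{x})$, and hence $\Psi(\overline{x})$ is a block of universal second-order quantifiers followed by an $\fo^k$-formula, that is, $\Psi(\overline{x})\in\uso^k$; moreover it has the same free first-order variables $\overline{x}$ and no free second-order variables, so it is directly comparable with $\varphi(\overline{x})$ over $\tau$. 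For every such $\mathfrak{A}$ and $s$ the chain of equivalences
\begin{align*}
\mathfrak{A},s\models\varphi\ &\iff\ \text{Eloise has a winning strategy in }\mathcal{G}_\infty(\mathfrak{A},s,\varphi)\\
&\iff\ \text{no }\tau'\text{-expansion of }\mathfrak{A}\text{ satisfies }\Theta(\overline{x})\text{ at }s\\
&\iff\ \text{every interpretation of }\overline{N}\text{ over }A\text{ makes }\neg\Theta(\overline{x})\text{ true at }s\\
&\iff\ \mathfrak{A},s\models\Psi(\overline{x})
\end{align*}
then holds: the second equivalence is the negation of (i)$\Leftrightarrow$(ii), and the remaining ones are immediate from the definitions. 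Polynomial-time computability is inherited from the final clause of Theorem \ref{safetyexpressible}, since prepending a second-order quantifier prefix of length $|\tau'\setminus\tau|$ and one negation adds only linear overhead.

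I do not expect any genuine obstacle: essentially all of the work --- which parallels the construction used for Lemma \ref{compactness} --- is already contained in Theorem \ref{safetyexpressible}, and the corollary is a formal consequence. The one point worth spelling out is \emph{why} the translation stays within $\uso^k$: Theorem \ref{safetyexpressible} characterises the \emph{negative} condition ``Eloise has no winning strategy'' by an \emph{existentially} quantified second-order expansion, so negating that characterisation turns the existential second-order block into a universal one, landing in $\uso^k$ rather than $\eso^k$ and producing a formula equivalent to the \emph{positive} condition ``$\mathfrak{A},s\models\varphi$''. Applying the theorem to $\varphi$ directly, rather than going through $\neg\varphi$, is exactly what makes this work.
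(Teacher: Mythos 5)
Your proposal is correct and coincides with the paper's own proof: both apply Theorem \ref{safetyexpressible} to $\varphi(\overline{x})$ to obtain the $\fo^k$-formula characterising ``Eloise has no winning strategy'' via an expansion, and then take the universally quantified negation $(\forall R)_{R\in\sigma}\,\neg\Theta(\overline{x})$ to land in $\uso^k$. Your explicit remark about why negating the existential characterisation of the negative condition yields a universal second-order formula for the positive one is exactly the (implicit) content of the paper's argument.
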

\begin{proof}
    Given a formula $\varphi(\overline{x}) \in \ulfo^k$,  compute the formula $\Psi(\overline{x}) \in \mathrm{FO}^k$ given by Theorem \ref{safetyexpressible}. This can clearly be computed in polynomial time from $\varphi(\overline{x})$. If $\sigma$ denotes the set of relation symbols that occur in $\Psi(\overline{x})$ but not in $\varphi(\overline{x})$,
    then $\varphi(\overline{x})$ is equivalent to the
    second-order formula
    \[\neg (\exists R)_{R\in \sigma} \Psi(\overline{x})\]
    and thereby to the formula  
    \[(\forall R)_{R\in \sigma} \neg \Psi(\overline{x})\] of $\uso^k$. 
\end{proof}

Unsurprisingly, the containment implied by Corollary \ref{cor:ulfo_contained_in_uso} turns out to be strict.

\begin{theorem}\label{theorem:ulfo_contained_in_uso}
    $\ulfo < \uso$ in relation to expressive power.
\end{theorem}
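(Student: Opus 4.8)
The plan is to combine the containment already supplied by Corollary~\ref{cor:ulfo_contained_in_uso} with a Löwenheim--Skolem argument that rules out one specific $\uso$-definable class from being definable in $\ulfo$. For the containment $\ulfo \le \uso$: every sentence $\varphi$ of $\ulfo$ mentions only finitely many variables and hence lies in $\ulfo^k$ for some $k$, so by Corollary~\ref{cor:ulfo_contained_in_uso} it is equivalent to a sentence of $\uso^k \subseteq \uso$. It thus remains to exhibit a $\uso$-sentence $\chi$ over a relational vocabulary such that no $\ulfo$-sentence $\varphi$ satisfies $\model(\varphi) = \model(\chi)$.

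I would take $\chi$, over the vocabulary $\{<\}$, to assert that $<$ is a dense linear order without endpoints that is moreover Dedekind-complete, i.e.\ every nonempty subset bounded above has a least upper bound. The linear-order, density and no-endpoints conditions are first-order; Dedekind-completeness has the shape $\forall X\,\delta(X)$ with $\delta \in \mathrm{FO}$, quantifying universally over a unary relation $X$ (the ``subset'') and then stating a first-order property of $X$ and $<$. Conjoining, $\chi$ can be written as $\forall X\,\bigl(\delta_{\mathrm{dLO}} \wedge \delta(X)\bigr)$, a sentence of $\uso$, and $\model(\chi)\neq\varnothing$ since $(\mathbb{R},<)\models\chi$. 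The decisive feature is that \emph{every} model of $\chi$ is uncountable: a countable dense linear order without endpoints is isomorphic to $(\mathbb{Q},<)$ by a back-and-forth argument, and $(\mathbb{Q},<)$ is not Dedekind-complete (e.g.\ $\{q : q^2 < 2\}$ is nonempty and bounded above but has no least upper bound in $\mathbb{Q}$), so no countable structure satisfies $\chi$.

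Now suppose toward a contradiction that some $\ulfo$-sentence $\varphi$ has $\model(\varphi)=\model(\chi)$. Since $(\mathbb{R},<)\models\chi$ we get $(\mathbb{R},<)\models\varphi$, and the countable downward Löwenheim--Skolem property of $\ulfo$ (established in this paper) yields a countable substructure $\mathfrak{B}$ of $(\mathbb{R},<)$ with $\mathfrak{B}\models\varphi$, hence $\mathfrak{B}\models\chi$. But then $\mathfrak{B}$ is a countable model of $\chi$, contradicting the previous paragraph. Therefore $\chi$ is not $\ulfo$-definable, so $\uso \not\le \ulfo$, and together with $\ulfo \le \uso$ this gives $\ulfo < \uso$.

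The substantive point is the choice of $\chi$: one needs a property lying in the universal fragment $\uso$ that is outright incompatible with countability, so that the downward Löwenheim--Skolem property of $\ulfo$ forces a contradiction — Dedekind-completeness of a dense order does exactly this with no set-theoretic delicacy. Everything else (verifying that the order-and-completeness axioms assemble into a single $\uso$-sentence, and deducing strictness from $\uso \not\le \ulfo$) is routine.
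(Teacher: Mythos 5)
Your proof is correct, but the non-containment direction takes a genuinely different route from the paper. The paper shows $\uso \not\leq \ulfo$ by arguing that \emph{finiteness} over the empty vocabulary is not $\ulfo$-definable: if a $k$-variable sentence $\varphi$ held in the $(k+1)$-element pure set, Eloise's winning strategy could be simulated in $\mathbb{N}$ by preserving the equality pattern of the (at most $k$) assigned elements, so $\varphi$ could not separate finite from infinite pure sets. That argument is a direct, self-contained game-theoretic simulation using only material already available at that point in the paper. You instead pick a $\uso$-sentence $\chi$ (Dedekind-complete dense linear order without endpoints) all of whose models are uncountable, and invoke the countable downward L\"{o}wenheim--Skolem property of $\ulfo$ (Theorem \ref{lowenheimskolem}) to get a contradiction from $(\mathbb{R},<)$. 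This is sound and arguably slicker --- it reduces the whole non-containment to a single quotable model-theoretic property --- but note that Theorem \ref{lowenheimskolem} is proved only later in the paper, so as written your argument introduces a forward reference; fortunately the proof of Theorem \ref{lowenheimskolem} is an independent Skolem-hull construction that does not rely on the present theorem, so there is no circularity. The two approaches also yield different by-products: the paper's shows that $\ulfo$ cannot define finiteness (a class that \emph{is} $\ulfo$-axiomatizable projectively and has models of every cardinality), whereas yours only excludes classes with no countable models; both suffice for strictness given the shared containment via Corollary \ref{cor:ulfo_contained_in_uso}.
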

\begin{proof}
    To establish that $\uso \not\leq \ulfo$, we show that the class of finite structures over the empty vocabulary is not definable in $\ulfo$. Aiming for a contradiction, suppose that the property of finiteness is definable. Let $\varphi \in \ulfo$ be a sentence defining this class. Let $k$ be the number of variables occurring in $\varphi$. Define $\mathfrak{A}$ to be the model over the empty vocabulary and with the domain $\{0,\dots , k\}$ with precisely $k+1$ elememts. Thus
    $\mathfrak{A}\models \varphi$ 
    and hence Eloise has a winning strategy $\sigma$ in the game $\mathcal{G}_\infty(\mathfrak{A},\varphi)$. Now, let $\mathfrak{B} = \mathbb{N}$. To derive a contradiction, one can show that $\sigma$ can be used to design a winning strategy $\sigma'$ for Eloise in the game $\mathcal{G}_\infty(\mathfrak{B},\varphi)$. The idea is to simulate the ``identity information'' allowed by $\sigma$ in the game played with $\sigma'$. We next formulate this more formally.

    Let $X \subseteq \{x_1, \dots , x_k\}$, and let $S_1$ and $S_2$ be nonempty sets. Consider 
    two assignments $s_1:X\rightarrow S_1$ and $s_2:X\rightarrow S_2$. We say that the assignments are \emph{similar} if for all $x,y\in \{x_1,\dots , x_k\}$, we have $s_1(x) = s_1(y)$ iff $s_2(x) = s_2(y)$. We say that two positions in two semantic games are \emph{assignment-similar} if the assignment functions in the positions are similar. Now, it is easy to see that we can define $\sigma'$ based on $\sigma$ such that every play according to $\sigma'$ corresponds to a play
    according to $\sigma$ such that the two plays simultaneously realize assignment similar positions in every round. 
Therefore $\sigma'$ is a winning strategy in $\mathcal{G}_\infty(\mathfrak{B},\varphi)$.
\end{proof}

We can use Corollary \ref{cor:ulfo_contained_in_uso} to determine the exact complexity of the validity problem of $\ulfo^2$.

\begin{theorem}\label{thm:ulfo2_validity}
    The validity problem of $\ulfo^2$ is \textsc{coNExpTime}-complete.
\end{theorem}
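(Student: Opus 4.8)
The plan is to prove matching upper and lower bounds. For the upper bound, I would combine Corollary \ref{cor:ulfo_contained_in_uso} with Theorem \ref{thm:ulfo_validity}. Given $\varphi \in \ulfo^2$, the validity of $\varphi$ is equivalent, by Theorem \ref{thm:ulfo_validity}, to the validity of some approximant $\Phi_{\varphi}^n$; however, this alone does not bound $n$ effectively, so instead I would work directly with the $\uso^2$-translation. By Corollary \ref{cor:ulfo_contained_in_uso}, $\varphi$ is equivalent to a formula $(\forall R)_{R\in\sigma}\neg\Psi(\overline{x})$ where $\Psi \in \mathrm{FO}^2$ is polynomial-time computable. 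Thus $\varphi$ is valid iff $(\forall R)_{R\in\sigma}\neg\Psi$ is valid iff $\neg\Psi$ is valid (viewing the $R\in\sigma$ as ordinary relation symbols, since universally quantifying second-order variables that are free is the same as leaving them free for validity), iff $\Psi$ is unsatisfiable. Since $\Psi \in \mathrm{FO}^2$ and $\mathrm{FO}^2$ has the exponential-size model property (Grädel--Kolaitis--Vardi), satisfiability of $\mathrm{FO}^2$ is in \textsc{NExpTime}, so unsatisfiability is in \textsc{coNExpTime}; as $\Psi$ is computed in polynomial time from $\varphi$, validity of $\varphi$ is in \textsc{coNExpTime}.

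For the lower bound, I would reduce from the complement of \textsc{FO}$^2$-satisfiability, which is \textsc{coNExpTime}-complete (equivalently, reduce \textsc{FO}$^2$-satisfiability to non-validity of $\ulfo^2$, i.e.\ to $\ulfo^2$-satisfiability of negations). Since $\mathrm{FO}^2 \subseteq \ulfo^2$ trivially and $\ulfo$ is a conservative extension of $\mathrm{FO}$, a sentence $\chi \in \mathrm{FO}^2$ is satisfiable iff it is satisfiable as an $\ulfo^2$-sentence, iff $\neg\chi$ is not valid in $\ulfo^2$. Hence the complement of $\mathrm{FO}^2$-satisfiability (which is \textsc{coNExpTime}-complete) reduces in polynomial time to the validity problem of $\ulfo^2$ via $\chi \mapsto \neg\chi$. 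This gives \textsc{coNExpTime}-hardness.

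Combining the two bounds yields \textsc{coNExpTime}-completeness. The main subtlety to get right is the upper bound bookkeeping: one must check that the extra relation symbols $\sigma$ introduced by the translation, together with the universal second-order prefix, genuinely reduce validity of the $\uso^2$-sentence to $\mathrm{FO}^2$-unsatisfiability of $\Psi$ over the \emph{expanded} vocabulary $\tau'$, and that $\tau'$ still uses only $2$ variables so that the exponential model property of $\mathrm{FO}^2$ applies. One should also confirm that enlarging the signature by polynomially many new relation symbols does not affect the \textsc{NExpTime} bound for $\mathrm{FO}^2$-satisfiability, which it does not, since that bound is stated for arbitrary relational signatures. The hardness direction is essentially immediate from conservativity, so the real (though still routine) work is in the upper bound.
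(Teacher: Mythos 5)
Your proposal is correct and follows essentially the same route as the paper: translate $\varphi$ into a $\uso^2$-sentence via Corollary \ref{cor:ulfo_contained_in_uso}, reduce validity of that sentence to validity (equivalently, unsatisfiability of $\Psi$) in $\mathrm{FO}^2$, invoke the \textsc{NExpTime} bound for $\mathrm{FO}^2$-satisfiability, and obtain hardness from $\mathrm{FO}^2$ being a fragment of $\ulfo^2$ under the conservative extension. Your write-up merely spells out the vocabulary and second-order-prefix bookkeeping that the paper leaves implicit.
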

\begin{proof}
    The lower bound follows directly from the corresponding lower bound for two-variable logic $\fo^2$. For the upper bound, we first note that the translation given in the proof of Theorem  \ref{theorem:ulfo_contained_in_uso} is polynomial and preserves the number of variables being used. Hence sentences of $\ulfo^2$ are translated efficiently into equivalent sentences of $\uso^2$. Now, obviously a sentence $\psi$ of $\uso$ is valid if and only if the first-order part $\chi$ of $\psi$ is valid. 
    Thus the complexity of the validity problem of $\uso^2$ is the same as $\fo^2$, namely \textsc{coNExpTime}-complete. Hence also the validity problem of $\ulfo^2$ is in \textsc{coNExpTime}.
\end{proof}

Since we have already observed that $\lfo$ and $\ulfo$ have the same set of valid sentences, we have the following further corollary, where $\lfo^2$ denotes the two-variable 
fragment of $\lfo$.

\begin{corollary}
    The validity problem of $\lfo^2$ is \textsc{coNExpTime}-complete.
\end{corollary}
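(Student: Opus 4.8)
The plan is to reduce the validity problem of $\lfo^2$ to that of $\ulfo^2$, which has already been shown \textsc{coNExpTime}-complete in Theorem \ref{thm:ulfo2_validity}. The key observation, recorded in Remark \ref{sameremark}, is that the set of valid sentences of $\lfo$ coincides with the set of valid sentences of $\ulfo$: indeed, by Theorem \ref{LFOVALIDITY} a formula $\varphi$ is valid with respect to bounded semantics iff some approximant $\Phi_\varphi^n$ is valid, and by Theorem \ref{thm:ulfo_validity} the same condition characterizes validity of $\varphi$ with respect to unbounded semantics. Crucially, this equivalence does not change the formula $\varphi$ at all; it only reinterprets it under a different semantics. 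Hence the identity map witnesses a (trivial, polynomial-time) reduction from $\lfo^2$-validity to $\ulfo^2$-validity and back.

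First I would note that every $\lfo^2$-sentence is literally a $\ulfo^2$-sentence (the two logics have the same syntax, only the semantics differs), so for any $\varphi \in \lfo^2$ we have: $\varphi$ is $\lfo$-valid iff $\varphi$ is $\ulfo$-valid. The forward direction of this is Theorem \ref{LFOVALIDITY} combined with the right-to-left direction of Theorem \ref{thm:ulfo_validity} (via the common characterization through validity of approximants $\Phi_\varphi^n$), and the reverse direction is symmetric. Therefore the decision problem ``is $\varphi \in \lfo^2$ valid?'' is, as a set of strings, identical to the decision problem ``is $\varphi \in \ulfo^2$ valid?'' By Theorem \ref{thm:ulfo2_validity} the latter is \textsc{coNExpTime}-complete, so the former is as well. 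For the lower bound one can alternatively argue directly, exactly as in the proof of Theorem \ref{thm:ulfo2_validity}: $\lfo^2$ is a conservative extension of $\mathrm{FO}^2$, so the known \textsc{coNExpTime}-hardness of $\mathrm{FO}^2$-validity transfers immediately.

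There is really no substantial obstacle here; the statement is a corollary precisely because all the work has been done in establishing Theorems \ref{LFOVALIDITY}, \ref{thm:ulfo_validity}, and \ref{thm:ulfo2_validity}. The only point requiring a moment's care is making explicit that the passage from $\lfo$-validity to $\ulfo$-validity is the identity on syntax — one is not invoking an effective translation between two different logics but simply observing that the two semantic notions of validity agree on the same syntactic object, a fact that is ultimately routed through the shared notion of approximant validity. Once that is spelled out, the \textsc{coNExpTime} upper bound and the matching hardness both follow verbatim from the $\ulfo^2$ (equivalently $\mathrm{FO}^2$) case, completing the proof.
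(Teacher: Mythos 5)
Your proposal is correct and is essentially the paper's own argument: the paper derives the corollary in one line by noting (via Remark \ref{sameremark}, i.e., Theorems \ref{LFOVALIDITY} and \ref{thm:ulfo_validity}) that $\lfo$ and $\ulfo$ share the same set of valid sentences, so the \textsc{coNExpTime}-completeness transfers directly from Theorem \ref{thm:ulfo2_validity}. Your additional care in noting that the reduction is the identity on syntax is a fair elaboration but not a departure from the paper's route.
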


We note that, by Corollary \ref{cor:ulfo_contained_in_uso}, the fact that the validity problem of $\ulfo$ is recursively enumerable follows immediately from the
validity problem of $\uso$ being,
likewise, recursively enumerable.
However, the value of having proved the validity problem of $\ulfo$ recursively enumerable via Theorem \ref{thm:ulfo_validity} lies in the fact that the theorem relates $\ulfo$-validities to structurally similar validities of first-order logic (namely, approximants of the $\ulfo$-formulas). We have already used this fact to deduce that the set of valid sentences of $\lfo$ and $\ulfo$ coincide.
This fact will be crucial also in the next section, where we design an axiomatization
which is complete for $\lfo$ as well as $\ulfo$.

\section{A complete axiomatization}

In this section we develop a
proof system for $\lfo$ and $\ulfo$
that is complete for 
valitidities and of course sound. 
Recalling that both of these logics have the 
same validities, the same system
works for both of them.
However, formally speaking, we will 
consider the case of $\ulfo$ first.
%
%
%
%
%
%

In fact, more than just completeness for 
validities will be achieved.
We shall show
that our deduction system is actually 
complete for first-order 
premise sets for both $\ulfo$ and $\lfo$,
thereby also establishing that the two 
logics coincide in their logical
consequence relations in restriction to
$\mathrm{FO}$ premise sets.

\subsection{A system of natural deduction}

The lack of classical negation must be carefully taken into account when designing a proof system for $\ulfo$ and $\lfo$. For example, the law of 
excluded middle fails to be
valid in these logics, as
demonstrated already by simple
sentences such as $LC_L \lor \neg LC_L$ and even $C_L \lor \neg C_L$.
To define a suitable deduction system, we first fix some rules.
Let $\wedge$Intro denote the rule 
$\frac{\varphi\ \ \ \psi}{\varphi\wedge\psi}$
which we may conveniently denote also by $\varphi,\psi\mapsto \varphi\wedge\psi$.
Let $\wedge$Elim$1$ and $\wedge$Elim$2$ be
the rules $\varphi\wedge\psi\mapsto\varphi$
and $\varphi\wedge\psi\mapsto\psi$, respectively, and let $\vee$Intro$1$
and $\vee$Intro$2$ denote $\varphi\mapsto\varphi\vee\psi$ and $\varphi\mapsto\psi\vee\varphi$.
Let $\vee$Elim be the rule 
\begin{center}
   \scalebox{0.9}[0.9]{$\infer[{\mbox{\tiny ($\vee$Elim)}}]{\chi}{
       \varphi\vee \psi
       &
                                   \infer*{\chi}{
               [\varphi]
       }
       &
      \infer*{\chi}{
               [\psi]
       }
}$}
\end{center}
Let $\bot$Intro be the rule $\varphi\wedge\neg\varphi\mapsto\bot$ 
and $\bot$Elim the rule $\varphi \stackrel{*}{\mapsto} \varphi[\psi_1/\bot,\dots ,\psi_k/\bot]$
which reads as follows. We begin with a formula $\varphi$ in strong negation norm (recall the definition from the preliminaries). 
We first identify a 
finite number $k$ of
occurrences of $\bot$ that are not in
the scope of any negations, and then we 
simultaneously replace them, respectively, by
arbitrarily chosen formulas $\psi_1,\dots , \psi_k$. Here the star above $\mapsto$ reminds the 
reader that the rule has the side condition that the
replaced occurrences of $\bot$ must not be in the scope of any
negations and that $\varphi$ should be in 
strong negation normal form. 
Now, the deduction system $\mathcal{S}$ we shall use is
defined as follows.

\begin{enumerate}
    \item 
    We include $\wedge$Intro,
    $\wedge$Elim$1$, $\wedge$Elim$2$,
    $\vee$Intro$1$, $\vee$Intro$2$, $\vee$Elim,
    $\bot$Elim and $\bot$Intro in $\mathcal{S}$. 
    \item
    We include enough further
    rules in $\mathcal{S}$ so 
    that together with the above rules, our 
    system becomes strongly complete for
    the language of $\mathrm{FO}$ as specified in
    this paper and remains sound for
    $\ulfo$ and $\lfo$.
    There are many ways of doing 
    this, and this is straightforward to do 
    also concerning soundness, as where
    necessary, we can add rules with side 
    conditions that limit their use to
    first-order formulas. 
    \item
    We include the six \emph{recursion operator rules}
    and six \emph{duality rules} specified below.
\end{enumerate}

The \textbf{recursion operator rules} are the following, 
with explanations on notation and side conditions given after the list.

\[
\begin{array}{c}

%
%
%
%
%
%


\scalebox{0.9}[0.9]{\infer[\updownarrow
%
%
%
%
%
%
{\mbox{\tiny(\substshift)}}]{
\varphi[L\psi\{ L\psi/C_L\}]}{
        \varphi[L\psi]
    }}

\hspace{5mm}

\scalebox{0.9}[0.9]{\infer[\updownarrow
%
%
%
%
%
%
{\mbox{\tiny(\substcopyelim)}}]{
\varphi[L\psi\{\psi/C_L\}]}{
        \varphi[L\psi]
    }}

\\ \\

    \scalebox{0.9}[0.9]{\infer[\updownarrow 
\hspace{1mm} {\mbox{\tiny($L$Dual-Intro)}}]{
\varphi[L'L\psi\{\neg C_{L'}/\neg C_L\}]}{
        \varphi[L\psi]
    }}

\hspace{5mm}

\scalebox{0.9}[0.9]{\infer[{\updownarrow
\hspace{1mm} \mbox{\tiny($L$Dummy-Intro-Elim)}}]{
\varphi[L\psi]}{
        \varphi[\psi]
}}

\end{array}
\]

\[
\begin{array}{c}

    \scalebox{0.9}[0.9]{\infer[\updownarrow 
\hspace{1mm} {\mbox{\tiny($LC_L$Rename)}}]{
\varphi[L'\psi\{\{C_{L'}/C_L\}\}\, ]}{
        \varphi[L\psi]
    }}

\hspace{5mm}

\scalebox{0.9}[0.9]{\infer[{
\hspace{1mm}\mbox{\tiny ($C_L$Free-Elim)}}]
{\varphi[\psi/C_L]}
{
       \varphi[C_L]
       }}

\end{array}
\]

\medskip

\medskip

Firstly,
both of 
the rules $\substshift$ 
and $\substcopyelim$ must 
satisfy the side-condition that the
formula on top of the horizontal line is 
regular. We shall see that the rule $LC_L$Rename will in fact 
enable modifying formulas so that they indeed
become regular.

When using the rule \substshift, we begin with a formula $\varphi[L\psi]$ 
that has a subformula occurrence $L\psi$.  
We transform that occurrence $L\psi$
to $L\psi\{ L\psi/C_L\}$, that is, to a formula obtained from $L\psi$ by
replacing \emph{some} occurrences of $C_L$
by $L\psi$ itself. Any 
subset of the occurrences of $C_L$ in $L\psi$ can be replaced; the curly brackets indicate that indeed any \emph{set} of atoms $C_L$ in $L\psi$ can be chosen to be replaced. As an
example of using
the rule, if $L\psi = L(P(x) \wedge C_L)$,
then $L\psi\{L\psi/C_L\}$
can be the 
formula $$L(P(x)
\wedge L(P(x) \wedge C_L)).$$

Notice that the
rule allows us to modify a
subformula occurrence $L\psi$ of $\varphi[L\psi]$, so we
have a \textbf{deep inference rule}, as its
use is not limited to the
modification of the main operator of $\varphi[L\psi]$. Furthermore, \substshift\ is 
\textbf{bidirectional}, which is indicated by the upright double arrow and
means that we can use the rule in the standard way as well as in the reverse
direction. More rigorously, when using a bidirectional rule, we can (1) use
the rule in the standard downward fashion, and (2) if we can 
syntactically produce a formula $\varphi_{bottom}$ from $\varphi_{top}$ in the standard way,
then we are also allowed to produce $\varphi_{top}$ from $\varphi_{bottom}$.
The rule \substcopyelim\ replaces an occurrence of $L\psi$ in $\varphi[L\psi]$ by a formula $L\psi\{\psi/C_L\}$
obtained from $L\psi$ by replacing \emph{some}
looping atoms $C_L$ by $\psi$. 
Again we are free to choose any
subset of the atoms $C_L$ in the occurrence $L\psi$ to be replaced. 
The rule $L$Dual-Intro modifies $L\psi$ so that
\emph{some}
literals $\neg C_L$ in the strict scope or $L$
are replaced by $\neg C_{L'}$ and a new corresponding label symbol $L'$ is introduced. 
We require that $L'$ is a fresh symbol not occurring anywhere in $\varphi[L\psi]$. Once again any subset of the set of literals $\neg C_L$ in
the strict scope of the $L$ in the occurrence $L\psi$ can be replaced.

The rule $L$Dummy-Intro-Elim allows us to introduce (and eliminate in
the upward direction) a dummy label symbol $L$, i.e., a symbol 
with no looping atoms $C_L$ in its strict scope. 
The rule $LC_L$Rename replaces the
occurrence $L\psi$ by 
the formula $L'\psi\{\{C_{L'}/C_L\}\}$ where we have renamed $L$ to $L'$ and replaced all atoms $C_L$ in the
strict scope of $L$ by $C_{L'}$. The double brackets indicate that indeed \emph{all} occurrences of $C_L$ in
the strict scope or the particular
instance of $L$ must be
renamed. The
symbol $L'$ can be any label symbol, as
long as (1) the formula $L\psi$ does not have any 
occurrences of $C_{L'}$ that are free in $\psi$ and (2) if $\psi$ already contains symbols $L'$, none of the occurrences of $C_L$ (that are to be replaced) are in the scope of such occurrences of $L'$. So the renaming procedure is safe. Note that while we defined $LC_L$Rename as bidirectional, this is redundant, as the top-to-bottom direction already covers what can be
achieved by the reverse direction. 
Finally, $C_L$Free-Elim (which is not a bidirectional rule) allows us to 
replace a \emph{free} looping atom $C_L$ with any formula. A free $C_L$ is an
atom not in the scope of any
occurrence of $L$.

Now, the \textbf{duality rules} are the following.

\medskip

\medskip

\[
\begin{array}{c}
\scalebox{1}[1]{\infer[\updownarrow\ {\mbox{\tiny }}]{\varphi[(\neg \psi \vee \neg \chi)]}{
       \varphi[\neg (\psi\wedge \chi)]
       }}

\hspace{3mm}

\scalebox{1}[1]{\infer[\updownarrow\ {\mbox{\tiny }}]{\varphi[(\neg \psi \wedge \neg \chi)]}{
       \varphi[\neg (\psi \vee \chi)]
       }}

\hspace{3mm}

\scalebox{1}[1]{\infer[\updownarrow\ {\mbox{\tiny}}]{\varphi[\psi]}{
       \varphi[\neg\neg\psi]
       }}\\ \\

\hspace{3mm}

\scalebox{1}[1]{\infer[\updownarrow\ {\mbox{\tiny}}]{\varphi[\exists x\, \neg\psi]}{
       \varphi[\neg \forall x\, \psi]
       }}

\hspace{3mm}

\scalebox{1}[1]{\infer[\updownarrow\ {\mbox{\tiny }}]{\varphi[\forall x\, \neg\psi]}{
       \varphi[\neg \exists x\, \psi]
       }}

\hspace{3mm}

\scalebox{1}[1]{\infer[\updownarrow\ {
     \mbox{\tiny }}]{
\varphi[L \neg \psi\{\{\neg C_L/C_L\}\}]}{
        \varphi[\neg L \psi]
    }} 
\end{array}
\]

\medskip

\medskip

In the last rule, an occurrence of $\neg L\psi$ is 
replaced by $L\neg\psi{\{\{}\neg C_L/C_L{\}\}}$, 
where $\psi{\{\{}\neg C_L/C_L{\}\}}$ is obtained from $\psi$ by
replacing \emph{every} $C_L$ (which is in 
the strict scope of the $L$ of our 
occurrence $L\psi$) by $\neg C_L$.

It is straightforward to show soundness for $\mathcal{S}$, so we
skip that proof here for the sake of brevity.

\subsection{Completeness}\label{section:axiomatization}

    Let $\varphi$ be a formula of $\ulfo$. Recall that we say that $\varphi$ is in weak negation normal form if the only negated subformulas of $\varphi$ are atomic formulas. We say that $\varphi$ is in strong negation normal form if the only negated subformulas of $\varphi$ are atomic $\fo$-formulas. 
For example $P(x)\wedge L\neg C_L$ is in weak but not strong
negation normal form because $C_L$ is not an $\fo$-atom.

%
%
%


\begin{lemma}\label{nnf}
    For any $\varphi\in\ulfo$,
    there exists a formula $\varphi^*$ in strong negation normal form such that $\varphi \vdash \varphi^*$
    and $\varphi^* \vdash \varphi$.
\end{lemma}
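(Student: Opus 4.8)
The plan is to transform $\varphi$ into strong negation normal form by using the duality rules to drive every negation down to the atomic level, and then to remove any residual negated claim symbols with the help of the recursion operator rules. Since all the duality rules, and the recursion rules \substshift, \substcopyelim, $L$Dual-Intro, $L$Dummy-Intro-Elim are bidirectional, whatever sequence of rewrites we perform to reach $\varphi^*$ can be run backwards, which yields both $\varphi \vdash \varphi^*$ and $\varphi^* \vdash \varphi$ simultaneously; the one non-bidirectional rule we may need, $C_L$Free-Elim, will be applied only to the semantically inert free looping atoms, and its effect there can be undone by re-substitution, so it causes no asymmetry. Throughout the construction we keep the current formula regular, invoking $LC_L$Rename whenever a side condition of \substshift\ or \substcopyelim\ demands that its operand be regular; regularity is preserved by every rewrite we use.

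First I would push negations inward. Repeatedly, and as long as any negated non-atomic subformula remains, I would rewrite a subformula of the form $\neg(\psi\wedge\chi)$, $\neg(\psi\vee\chi)$, $\neg\exists x\,\psi$, $\neg\forall x\,\psi$ or $\neg\neg\psi$ by its dual via the corresponding duality rule, and a subformula $\neg L\psi$ by $L\neg(\psi\{\{\neg C_L/C_L\}\})$ via the last duality rule. The crucial observation is that this last rule, in one step, flips every looping atom $C_L$ in the strict scope of $L$ into $\neg C_L$; as the freed negation then continues downward it meets each such $\neg C_L$ and the pair cancels by double-negation elimination. Hence a negation that passes through a label and then descends to the leaves flips each $C_L$ bound by that label exactly twice, leaving it positive, while negations reaching $\mathrm{FO}$-atoms simply stay there — which is exactly what strong negation normal form allows. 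So after this phase the only possible violations of strong negation normal form are negated looping atoms $\neg C_L$ whose binding label $L$ lies strictly above the negation, together with negated free looping atoms $\neg C_L$.

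The remaining, and most delicate, task is to eliminate these residual negated looping atoms. A free $\neg C_L$ is easy: by $C_L$Free-Elim we replace $C_L$ by $L C_L$ for a fresh $L$, obtaining $\neg L C_L$, and then the $\neg L$-duality rule followed by double-negation elimination turns this into $L C_L$, which is in strong negation normal form (and strongly equivalent, as both games are forever undetermined). For a bound $\neg C_L$ with reference formula $L\psi$, the idea is to relocate the negation onto a fresh dual copy of the loop: using $L$Dual-Intro one may replace the offending literals $\neg C_L$ by $\neg C_{L'}$ for a fresh label $L'$ wrapped around $L\psi$, while using \substcopyelim\ one may expand a stuck $\neg C_L$ into $\neg\psi$ and then push that negation through $\psi$, again exploiting the double-flip of the $\neg L$-rule. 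I expect the main obstacle to be proving that this elimination terminates: pushing a negation through a label momentarily manufactures many new negated looping atoms, so the naive measures (number of negations, or negations weighted by the size of their scope) can increase, and one must instead set up a more careful well-founded measure — for instance one that processes negated looping atoms by the depth of their binding label, outermost first, and verifies that clearing all occurrences bound by an outermost label can only create negated looping atoms bound by labels at least as deep, never shallower. Soundness of each individual rewrite is routine and soundness of $\mathcal{S}$ is assumed, so once termination and correctness of the $\neg C_L$-elimination are established, the lemma follows.
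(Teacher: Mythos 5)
Your overall strategy coincides with the paper's: pass to weak negation normal form via the duality rules, then eliminate the residual negated looping atoms by renaming them to a fresh label with $L$Dual-Intro, expanding them to the reference formula with \substcopyelim, and sweeping the resulting negation $\neg L\psi'$ back through the body so that the double flip of the $\neg L$ duality rule leaves the bound atoms positive. You also correctly single out termination of this elimination as the crux. The gap is that the termination measure you propose points the wrong way. You suggest processing negated looping atoms outermost-first and claim that clearing the occurrences bound by an outermost label can only create negated looping atoms bound by labels at least as deep. The opposite is what happens: when the negation produced by expanding $\neg C_{L'}$ sweeps through the body $\psi'$, every \emph{positive} reference $C_{L_3}$ occurring in $\psi'$ whose binding label $L_3$ lies \emph{above} the label being processed gets turned into $\neg C_{L_3}$, and there is no second flip for it because the sweep never passes through $L_3$. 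So new negated references propagate strictly \emph{upward}, toward the root; with your order, clearing a shallow label forces you next to clear deeper ones, which then recreate negated references at the shallow labels you just cleared, and no decreasing measure is in sight. The paper's resolution is exactly the reverse order: it isolates the \emph{innermost}-level switching loop formulas (labels $L$ with $\neg C_L$ in strict scope such that no deeper label is negatively referenced), processes those first, and observes that each round produces negated references only at labels strictly closer to the root, so the number of rounds is bounded by the label-nesting depth of $\varphi$.

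Two smaller remarks. First, your ``flips each $C_L$ exactly twice'' picture is not literally correct when the descending negation is absorbed by an intervening $\neg$ before reaching a flipped atom, although this does not affect the (correct) conclusion that weak negation normal form is reachable and reversible using dualities alone. Second, your treatment of a free $\neg C_L$ (replace $C_L$ by $LC_L$ and apply dualities to obtain $LC_L$) makes the reverse derivation $\varphi^*\vdash\varphi$ awkward, since no rule of $\mathcal{S}$ removes the resulting non-dummy label; the paper instead applies $C_L$Free-Elim to substitute $\neg C_L$ for the free $C_L$ and cancels the double negation, a step that is trivially undone by one further application of $C_L$Free-Elim.
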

\begin{proof}
Firstly, we 
note that eliminating a negation 
from a \emph{free} but
negated $C_L$ can be
done by $C_L$Free-Elim and the duality 
rule for double negation. It is also easy to 
reverse this effect by $C_L$Free-Elim.

Now, to prove the claim of the lemma, we
will also use the rules \substcopyelim\ and $L\text{Dual-Intro}$ together with the duality rules.
It follows directly from the duality rules that for any formula $\beta$,
there exists a formula $\beta^*$ in weak negation normal form such that $\beta \vdash \beta^*$
and $\beta^* \vdash \beta$. Thus we assume that $\varphi$ is a formula in weak negation
normal form, and our first goal is to show how to modify $\varphi$ in a
deduction so that we get rid the possible negations in front of looping atoms $C_L$ in $\varphi$,
thereby ending up with a formula in strong negation normal form.

Now, suppose $L\psi$ is a subformula of $\varphi$ such that the following conditions hold.
\begin{enumerate}
\item
$\neg C_{L}$ occurs in $L\psi$ at 
least once in the strict scope of the main 
operator $L$ of $L\psi$.
\item
There are no label symbols in $\psi$ that are
referred to by a negated looping atom, i.e.,
there is no subformula $L_0\, \alpha$ in $\psi$
such that $\alpha$ contains the literal $\neg C_{L_0}$ in
the scope of the main operator $L_0$ of $L_0\, \alpha$.
\end{enumerate}
Then we call $L\psi$ an \emph{innermost-level switching loop formula}, or more shortly, an $\mathit{ILSL}$-formula.
To eliminate all negated
looping atoms from $\varphi$, 
we first use the rule $L\text{Dual-Intro}$ to
every $\mathit{ILSL}$-formula $L \psi$ of $\varphi$, 
thereby replacing $L \psi$ by $$L'L\psi \{\{\neg C_{L'}/\neg C_L\}\}$$
where $L'$ is a fresh label symbol. Intuitively, this just renames the literals $\neg C_L$ to $\neg C_{L'}$. 
Then, denoting $L'L\psi \{\{\neg C_{L'}/\neg C_L\}\}$
by $L'L\psi'$, we use \substcopyelim\ to
transform $L'L\psi'$ to $L'L\psi' \{L\psi' /C_{L'}\}$
where we choose to replace \emph{every} occurrence of $C_{L'}$ by $L\psi'$. As indeed every such occurrence becomes replaced, we denote $L'L\psi' \{L\psi' /C_{L'}\}$ by $L'L\psi' \{\{L\psi' /C_{L'}\}\}$.
Now, notice that each atom $C_{L'}$ occurs negated in $\psi'$, so we will therefore denote $L'L\psi' \{\{ L \psi' /  C_{L'}\}\}$ by  $L'L\psi' \{\{\neg L \psi' /\neg C_{L'}\}\}$. 
Note then that due to the formulas $\neg L \psi'$ that replace the literals $\neg C_{L'}$, the formula $L'L\psi' \{\{\neg L \psi' / \neg C_{L'}\}\}$ is no longer in weak negation normal form. Thus the next step is to push
negations to the atomic level using the duality rules, including elimination of double negations. 
Hence we obtain the formula
$L'L\psi' \{\{L \psi'_d\, / \neg C_{L'}\}\}$
where $L\psi_d'$ is the formula we ultimately obtain
from $\neg L\psi'$ via the duality rules. Now, we
claim $L\psi_d'$ contains no 
literals $\neg C_{L'}$ or $\neg C_L$.
This is due to the following observations. 
Firstly, recall from above that each atom $C_{L'}$
occurs negated in $\psi'$ while none of the atoms $C_L$ does.
Thereby, when we apply the duality rule (for label symbols) to $\neg L\psi'$, we 
obtain a formula $L\neg \psi''$ where all the atoms $C_{L'}$ and $C_{L}$ occur in $\psi''$ with a
single negation directly in front of them. Thus the
transformation of $L\neg\psi''$ to $L\psi_d'$ has the
desired effect that $L\psi_d'$ 
contains no literals $\neg C_{L'}$ or $\neg C_L$.

This way we have eliminated negated literals $\neg C_L$ from the $\mathit{ILSL}$-formulas $L\psi$ of $\varphi$. The
obtained formula
$\chi$ may still contain further 
literals $\neg C_{L''}$, but we may simply repeat the procedure described above, starting from the \emph{ILSL}-formulas of $\chi$. Altogether, the strategy is to repeat the procedure sufficiently many times until there no longer exist any \emph{ILSL}-formulas. The process ultimately terminates since after each repetition, the \emph{ILSL}-formulas of the newly obtained formula $\chi'$ will be closer to the main connective (i.e., closer to the root of the syntax tree of $\chi'$) than after the previous repetition. Indeed, the number of repetitions needed is clearly bounded above by the maximum nesting depth of label symbols in the original formula $\varphi$. We let $\varphi^*_0$ denote the final formula obtained from the procedure. Now, $\varphi_0^*$ can still contain free negated literals $C_L$, but as discussed in the beginning of the current proof, these can be eliminated by $C_L$Free-Elim and the duality rule for double negation. We let $\varphi^*$ denote the formula obtained from $\varphi_0^*$ after also the possible negations in front of free looping atoms have been eliminated.

We then discuss the converse deduction from $\varphi^*$ to $\varphi$.
First, note that all the inferences used to obtain $\varphi_0^*$
from $\varphi$ used bidirectional rules only, so we can reverse the inferences and therefore $\varphi_0^*\vdash\varphi$. Thus it suffices to show that $\varphi^*\vdash\varphi_0^*$. Now, our inference  $\varphi_0^*\vdash\varphi^*$ above simply removed the possible negations in front of \emph{free} occurrences of looping atoms. As discussed in the beginning of our proof, this last step can be reversed
simply by $C_L$Free-Elim (replacing $C_L$ by $\neg C_L$).
\end{proof}

Recall the definition of the $n$-approximants $\Phi_{\varphi}^n$ of a 
formula $\varphi$.
We are now ready to prove the
following lemma.

\begin{lemma}\label{thepreviouslemma}
Let $\varphi\in \ulfo$ be in strong
negation normal form and let $n\in \mathbb{N}$. Then $\Phi^n_\varphi \vdash \varphi$.
\end{lemma}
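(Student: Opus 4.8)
The plan is to route the derivation through the $n$-th unfolding $\Psi^n_\varphi$, splitting the task into two halves: first establish $\Phi^n_\varphi \vdash \Psi^n_\varphi$, and then ``fold'' the unfolding back up to obtain $\Psi^n_\varphi \vdash \varphi$. Chaining the two (and noting $\Psi^0_\varphi = \varphi'$ is a regularisation of $\varphi$) yields the claim.

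For the first half, observe that since $\varphi$ is in strong negation normal form, so is every unfolding $\Psi^k_\varphi$: it is built from the regularisation $\varphi'$ (itself in strong negation normal form, since regularisation only performs safe renamings) by repeatedly substituting reference formulas $L\psi$, each of which is a subformula of $\varphi'$. Hence every looping atom in $\Psi^n_\varphi$ occurs positively, so $\Phi^n_\varphi$ is exactly $\Psi^n_\varphi$ with all label symbols deleted and every claim atom $C_L$ replaced by $\bot$. I would first reinstate the deleted label symbols: at this stage the formula contains no claim atoms at all (only $\bot$ where they used to be), so, reintroducing labels from innermost to outermost, each label is dummy at the moment of its introduction, and a sequence of $L$Dummy-Intro-Elim steps produces the formula $\Theta_n$ obtained from $\Psi^n_\varphi$ by replacing each $C_L$ with $\bot$. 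Now $\Theta_n$ is still in strong negation normal form and the occurrences of $\bot$ coming from the claim atoms are positive, so a single application of $\bot$Elim replacing precisely those $\bot$'s by the corresponding $C_L$'s (leaving any genuine $\bot$'s of $\varphi$ alone) gives $\Psi^n_\varphi$. This establishes $\Phi^n_\varphi \vdash \Psi^n_\varphi$.

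For the second half I would fold the unfolding one level at a time, proving $\Psi^{k+1}_\varphi \vdash \Psi^k_\varphi$ for each $k$ and chaining down to $\Psi^0_\varphi = \varphi'$, and then reversing the safe renamings of the regularisation by a block of $LC_L$Rename steps to get $\varphi' \vdash \varphi$. The key tool is that $\substshift$ is bidirectional: replacing a chosen set of claim atoms $C_L$ inside an occurrence of $L\psi$ by $L\psi$ itself is exactly one round of unfolding at that label, and running the rule backwards contracts such copies. The only subtlety here is the side condition that $\substshift$ may be applied only when the whole current formula is regular; since one round of unfolding duplicates labels, the intermediate formulas are not regular, so between successive $\substshift$ applications I would insert re-regularisations via $LC_L$Rename (also bidirectional). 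Because every rule used along the way is bidirectional, the entire transformation from a regularisation of $\Psi^k_\varphi$ to a regularisation of $\Psi^{k+1}_\varphi$ is reversible, which is what yields $\Psi^{k+1}_\varphi \vdash \Psi^k_\varphi$.

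The main obstacle is the bookkeeping in this second half. A round of unfolding replaces all claim atoms simultaneously by their reference formulas, whereas $\substshift$ acts on one occurrence of $L\psi$ at a time; consequently the naive one-label-at-a-time unfolding does not literally reproduce $\Psi^{k+1}_\varphi$ up to renaming — it can leave a full nested copy $L\psi$ where $\Psi^{k+1}_\varphi$ carries a mere back-reference $C_L$, or conversely. Reconciling the two shapes requires further deep applications of $\substshift$ at the offending positions (each again preceded by a re-regularisation), and the careful part of the argument is to check that this reconciliation terminates and that all side conditions are respected throughout: regularity for $\substshift$ and $\substcopyelim$, dummyness for $L$Dummy-Intro-Elim, strong negation normal form and positivity for $\bot$Elim, and safety for $LC_L$Rename. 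Everything else is routine — the first-order rules, the duality rules, and $C_L$Free-Elim for the free looping atoms that regularisation leaves untouched.
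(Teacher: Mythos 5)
Your first half is fine and coincides with what the paper does at the corresponding point: reinstating the (at that stage dummy) labels with $L$Dummy-Intro-Elim and then applying $\bot$Elim to the positive occurrences of $\bot$ is exactly how the paper climbs from the approximant back into the labelled world. The genuine gap is in the second half, at precisely the point you flag and then defer. The reverse direction of \substshift{} lets you pass from $\varphi[L\psi\{L\psi/C_L\}]$ to $\varphi[L\psi]$ only when (i) the copies being contracted are syntactically identical to the reference formula $L\psi$ \emph{of the contracted formula}, and (ii) the contracted formula $\varphi[L\psi]$ is regular. Neither condition is available when you try to fold $\Psi_\varphi^{k+1}$ down to $\Psi_\varphi^{k}$: the copies planted by the unfolding are copies of the reference formulas \emph{of $\Psi_\varphi^{k}$}, taken before the simultaneous substitution, whereas after you have treated even one label the reference formulas of the remaining ones have already changed, so the pattern $\varphi[L\psi\{L\psi/C_L\}]$ is simply not present; moreover $\Psi_\varphi^{k}$ is not regular for $k\geq 1$, so it can never be the top formula of a \substshift{} instance, even with $LC_L$Rename steps interleaved. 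Already $L_1(P(x)\wedge L_2(Q(x)\wedge C_{L_1}\wedge C_{L_2}))$ at $n=1$ exhibits the mismatch: the copy substituted for $C_{L_2}$ still contains a bare $C_{L_1}$, while the reference formula of $L_2$ in $\Psi_\varphi^{1}$ contains the expanded $L_1$-copy. Your proposed reconciliation by further deep applications of \substshift{} makes things worse, not better, because each such application inserts the \emph{current} reference formula, which is not the formula sitting at the offending position; you drift away from $\Psi_\varphi^{k+1}$ rather than converging to $\Psi_\varphi^{k}$. So $\Psi_\varphi^n\vdash\varphi$, the half that carries all the weight, is not established.

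The paper avoids this by never targeting $\Psi_\varphi^n$ at all. It runs the forward process, iterated \substshift{} with $LC_L$Rename re-regularisations, starting from $\varphi$, obtaining a formula $\varphi_m$ with $\varphi_m\vdash\varphi$ for free by reversibility, and then observes that $\varphi_m$ agrees with a renaming of $\Psi_\varphi^n$ everywhere \emph{above} the looping-atom positions of $\Psi_\varphi^n$, carrying some formulas $\chi_1,\dots,\chi_p$ of irrelevant shape at those positions. Since $\bot$Elim may replace the positive $\bot$'s of the approximant by \emph{arbitrary} formulas, one gets $\Phi_\varphi^n\vdash\varphi_m$ directly by inserting the $\chi_i$, and the mismatch you are fighting never has to be resolved. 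To repair your argument, replace the intermediate object $\Psi_\varphi^n$ by $\varphi_m$ throughout.
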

\begin{proof} 
Suppose $\varphi\in \ulfo$ is in strong negation normal form. 
In our argument below we can assume that $\varphi$ does not
have free looping atoms or dummy labels, and
furthermore, $\varphi$ is regular. This can be
seen as follows. 
Suppose we have proved $\Phi^n_{\varphi^*} \vdash \varphi^*$
where $\varphi^*$ is obtained from a 
regularisation $\varphi'$ of $\varphi$ by 
removing dummy labels and replacing free
looping atoms by $\bot$. Firstly, we
have $\Phi^n_{\varphi^*} 
= \Phi^n_{\varphi}$ and thus $\Phi^n_{\varphi} \vdash \varphi^*$.
Secondly, we have $\varphi^*\vdash\varphi'$ 
by $\bot$Elim and
$L$Dummy-Intro-Elim. Finally, we have $\varphi'\vdash\varphi$ by $LC_L$Rename.
Thus we can indeed make the
simplifying assumption that $\varphi$ is
regular and
has neither free looping atoms nor dummy labels. 
%

%

Now, let us define a sequence $\varphi_0,
\dots, \varphi_m$ of formulas 
where $\varphi_0 = \varphi$ 
and $\varphi_i\vdash \varphi_{i+1}$
for each $i<m$. The idea is to replace looping atoms by
corresponding reference formulas. To obtain $\varphi_{i+1}$ 
from $\varphi_i$, do the following. 
Suppose there are $\ell$ looping atom 
occurrences in $\varphi_i$. We enumerate 
these looping atoms, with the aim of replacing
them in the order of enumeration with corresponding
reference formulas, one by one.\footnote{Strictly speaking, we enumerate the paths from the root of the syntax
tree of $\varphi_i$ to the
occurrences of looping atoms, because the atoms
themselves may become renamed several times
during the next $\ell$ steps of our procedure. When we talk about the $j$th atom in the enumeration, we mean the atom whose path from the root of the syntax tree of the current formula is the same as the path of the $j$th atom in $\varphi_i$.\label{footnoteppp}}  More formally, we define a sequence of $\ell$ operations
that produce formulas $\varphi_{i,0},
\dots , \varphi_{i,\ell}$ such that 
$\varphi_i = \varphi_{i,0}$ and  
$\varphi_{i,\ell} = \varphi_{i+1}$
and we have $\varphi_{i,j}\vdash \varphi_{i,j+1}$
for each $j<\ell$. Each of the $\ell$
operations will replace a looping atom by a
corresponding reference formula in a 
way to be specified as follows.  
\begin{enumerate}
\item
Suppose we have already 
obtained $\varphi_{i,j}$
(if $j=0$, then $\varphi_{i,j} = \varphi_i$).
First use $LC_L$Rename sufficiently  
many times to obtain a
regular variant $\varphi_{i,j}^*$ of $\varphi_{i,j}$.
\item
Now, let $C_{L}$ denote the $j$th atom occurrence in
our enumeration of the looping atoms, that is, we
let $C_L$ denote the looping atom
occurrence in $\varphi_{i,j}^*$ that
corresponds to the $j$th atom in the original enumeration.
The set of looping atoms of $\varphi_{i,j}^*$ can be
different from that of $\varphi_{i,j}$, but here we are indeed replacing the atom occurrences in $\varphi_{i,j}^*$ that correspond to the original looping
atom occurrences in $\varphi_i$ (recall Footnote  
\ref{footnoteppp}). Now, we use the
rule $\substshift$ in the 
top-to-bottom direction to replace the $j$th
looping atom occurrence by the 
reference formula which that atom has in $\varphi_{i,j}^*$.
Note that regularity of $\varphi_{i,j}^*$ is required to enable us to use $\substshift$.
\end{enumerate}

This way we obtain the formula $\varphi_{i+1}$,
and thus ultimately the formula $\varphi_m$,
essentially by replacing looping atoms by 
corresponding reference formulas.

We have now established that $\varphi_0\vdash\varphi_m$ by using
$\substshift$ and $LC_L$Rename. Clearly $LC_L$Rename
has the property that if we can deduce $\beta$ from $\alpha$ by
using the rule, then we can also deduce $\alpha$
from $\beta$ via $LC_L$Rename (whence we could have
defined the rule as bidirectional). Furthermore, $\substshift$ is
bidirectional by definition. Therefore we
conclude that also $\varphi_m\vdash\varphi_0$,
that is, $\varphi_m\vdash\varphi$.

Now, intuitively $\varphi_m$ was
obtained from $\varphi = \varphi_0$ by 
repeated substitution of looping atoms by 
corresponding reference formulas. By 
making $m$ large enough, 
we obtain a formula $\Psi^* := \varphi_m$ that can
also be obtained in an alternative way
from the $n$-enfolding $\Psi_{\varphi}^n$ of
$\varphi$ by the
following two steps (note here that we do not
claim that the second one of the
steps can be reproduced by using our
deduction rules):
\begin{enumerate}
    \item
    We first rename label symbols and looping 
    atoms of $\Psi_{\varphi}^n$ in a suitable way, obtaining a
    formula $\Psi_0^*$.
    \item
    We then replace all the 
    looping atom occurrences 
    $C_{L_1},\dots , C_{L_p}$ in $\Psi_0^*$ by suitable formulas $\chi_1,\dots , \chi_p$,
    thereby ending up with $\Psi^*$.
\end{enumerate}
%
%
%
The informal key intuition is simply that we can
view $\Psi^* = \varphi_m$ as an 
extension of a renaming of 
the $n$-unfolding $\Psi_{\varphi}^n$.
We next aim to show that $\Phi_{\varphi}^n\vdash\Psi^*$
where $\Phi_{\varphi}^n$ is the $n$-approximant of $\varphi$. 
This is done as follows.

Firstly, as $\varphi$ is in strong
negation normal form, so is the $n$-unfolding $\Psi_{\varphi}^n$.
Thus the $n$-approximant $\Phi_{\varphi}^n$ is by 
definition obtained from $\Psi_{\varphi}^n$ by replacing all the
looping atoms by $\bot$ and then deleting all label symbols $L$.
Now recall from above the formula $\Psi^*_0$ obtained from $\Psi_{\varphi}^n$ by
renaming label symbols and looping atoms in $\Psi_{\varphi}^n$.
Beginning from the approximant $\Phi_{\varphi}^n$, we can 
reintroduce the corresponding label
symbols (but not looping atoms) by using the rule $L$Dummy-Intro-Elim, thus obtaining a
formula $\Phi^*$ which is otherwise as $\Psi_0^*$ but 
has atoms $\bot$ in the place of the looping
atoms $C_{L_1},\dots , C_{L_p}$ of $\Psi_0^*$.
Then, recalling the formulas $\chi_1,\dots, \chi_p$, we can
replace the atoms $\bot$ in $\Phi^*$ (corresponding to the
atoms $C_{L_1},\dots , C_{L_p}$ in $\Psi^{*}_0$) by the
formulas $\chi_1,\dots , \chi_p$,
thereby obtaining the formula $\Psi^*$. This step can be done using the rule $\bot$Elim. 
Thus we have $\Phi_{\varphi}^n\vdash\Psi^*$.

Now, as $\Phi_{\varphi}^n\vdash\Psi^*$, is
suffices to show that $\Psi^*\vdash\varphi$ to
conclude our proof. But we have already essentially
shown this. Indeed, we defined above
that $\Psi^*:=\varphi_m$ for a
suitably large $m$. Furthermore, we explicitly 
proved above that $\varphi_m\vdash\varphi$ for any $m$.
Thus we have $\Psi^*\vdash\varphi$, as required. 
\end{proof}

\begin{theorem}\label{weakcomplete}
    Let $\varphi$ be a formula of $\ulfo$ or $\lfo$. If $\varphi$ is valid, then $\vdash \varphi$.
\end{theorem}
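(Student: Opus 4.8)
The plan is to reduce validity in $\ulfo$ (equivalently $\lfo$) to validity in $\mathrm{FO}$ via approximants, and then invoke the assumed first-order completeness of $\mathcal{S}$. Note first that $\mathcal{S}$ makes no reference to semantics, so $\vdash$ denotes the same relation whether we think of the formula as belonging to $\ulfo$ or to $\lfo$; moreover, by Remark~\ref{sameremark} the two logics have the same valid sentences. Hence it suffices to prove the claim for $\ulfo$, i.e.\ assuming $\varphi$ is valid with respect to the unbounded semantics.

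First I would pass to strong negation normal form. By Lemma~\ref{nnf} there is a formula $\varphi^*$ in strong negation normal form with $\varphi\vdash\varphi^*$ and $\varphi^*\vdash\varphi$. By soundness of $\mathcal{S}$ these two derivations give $\varphi\models\varphi^*$ and $\varphi^*\models\varphi$, so $\varphi^*$ is valid. Since $\varphi^*\vdash\varphi$, it suffices to establish $\vdash\varphi^*$; so from now on I may assume $\varphi$ itself is in strong negation normal form. Next, apply Theorem~\ref{thm:ulfo_validity}: as $\varphi$ is valid, there is some $n\in\mathbb{N}$ such that the approximant $\Phi_\varphi^n$ is valid. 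The point is that $\Phi_\varphi^n$ is an ordinary $\mathrm{FO}$-sentence, and by clause~(2) in the definition of $\mathcal{S}$ the system is complete for the validities of $\mathrm{FO}$, so $\models\Phi_\varphi^n$ yields $\vdash\Phi_\varphi^n$. Finally, Lemma~\ref{thepreviouslemma} (applicable precisely because $\varphi$ is in strong negation normal form, and using the very same $n$) gives $\Phi_\varphi^n\vdash\varphi$. Composing the derivation of $\Phi_\varphi^n$ with this one yields $\vdash\varphi$, as required.

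There is no genuine obstacle remaining at this level: all the real work has already been carried out in Lemmas~\ref{nnf} and \ref{thepreviouslemma} and in Theorem~\ref{thm:ulfo_validity}, and what is left is bookkeeping — keeping track of which $n$ is used and noting that composition of natural-deduction derivations is legitimate. The one subtlety worth a sentence is that the first-order completeness of $\mathcal{S}$ is needed only in the empty-premise (validity) case, which is exactly what is guaranteed. I would also remark explicitly that the $\lfo$ case can be obtained identically by reading $\models$ as bounded validity and substituting Theorem~\ref{LFOVALIDITY} for Theorem~\ref{thm:ulfo_validity}, so the result holds for both logics without even appealing to Remark~\ref{sameremark}.
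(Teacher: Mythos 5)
Your proof is correct and follows essentially the same route as the paper's: reduce to strong negation normal form via Lemma~\ref{nnf}, pass to a valid first-order approximant (the paper cites Remark~\ref{sameremark} together with Theorem~\ref{LFOVALIDITY} where you cite Theorem~\ref{thm:ulfo_validity} directly, which is equivalent), derive it by first-order completeness, and lift back with Lemma~\ref{thepreviouslemma}. Your explicit justification via soundness that $\varphi^*$ is valid, and your observation that the $\lfo$ case can be handled symmetrically without Remark~\ref{sameremark}, are minor refinements of the same argument.
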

\begin{proof}
    Suppose that $\varphi$ is a valid
    formula of $\ulfo$. Let $\varphi^*$ be the
    negation normal form variant of $\varphi$
    guaranteed to exist by Lemma \ref{nnf}. Since $\varphi$ is
    valid, so is $\varphi^*$. By Remark \ref{sameremark}
    and Theorem \ref{LFOVALIDITY}, 
    this implies that $\Phi_{\varphi^*}^n$ is
    valid for some $n\in \mathbb{N}$. Since our proof calculus is
    complete for first-order formulas, we
    have $\vdash \Phi_{\varphi^*}^n$.
    By Lemma \ref{thepreviouslemma}, we
    thus have $\vdash \varphi^*$. Hence we have $\vdash \varphi$ by Lemma \ref{nnf}, concluding the
    case for $\ulfo$. The 
    case for $\lfo$ now follows from Remark \ref{sameremark}. 
\end{proof}

We the
show that if $\varphi$ is a formula of $\ulfo$ or $\lfo$ and $\Sigma$ is a set of $\fo$-formulas,
then we have $\Sigma \models \varphi$ iff $\Sigma \vdash \varphi$, i.e., we have 
completeness with respect to $\mathrm{FO}$ premise sets.

\begin{lemma}\label{compactnessforfoassumptionslfo}
    Let $\varphi$ be a formula of $\lfo$ or $\ulfo$ and let $\Sigma$ be a set of $\fo$-formulas. Suppose that $\Sigma \models \varphi$. Then there exists a finite $\Sigma_0 \subseteq \Sigma$ so that $\Sigma_0 \models \varphi$.
\end{lemma}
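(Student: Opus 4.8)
The plan is to handle $\lfo$ and $\ulfo$ separately, the case of $\lfo$ being a quick consequence of the approximant characterisation together with compactness of $\fo$, and the case of $\ulfo$ requiring an adaptation of the model construction in the proof of Lemma~\ref{compactness}. In both cases I would first replace the free variables of $\varphi$ and the free variables occurring in $\Sigma$ by fresh constants (encoded by relation symbols exactly as in the proof of Lemma~\ref{compactness}), so that $\varphi$ and all members of $\Sigma$ become sentences while the consequence relation is unchanged.

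For $\varphi\in\lfo$: by Theorem~\ref{approximationsequivalent}, for every suitable model $\mathfrak{A}$ one has $\mathfrak{A}\models_\omega\varphi$ iff $\mathfrak{A}\models\bigvee_{n\in\mathbb{N}}\Phi_\varphi^n$, so $\Sigma\models\varphi$ is equivalent to the $\fo$-set $\Sigma\cup\{\neg\Phi_\varphi^n\mid n\in\mathbb{N}\}$ being unsatisfiable. Compactness of $\fo$ then gives a finite unsatisfiable subset; I would let $\Sigma_0\subseteq\Sigma$ collect the finitely many members of $\Sigma$ appearing in it and let $N$ be the largest index of a negated approximant appearing in it. By Lemma~\ref{indexmonotonicity}, $\neg\Phi_\varphi^N\models\neg\Phi_\varphi^n$ for all $n\le N$, so $\Sigma_0\cup\{\neg\Phi_\varphi^N\}$ is already unsatisfiable, i.e.\ $\Sigma_0\models\Phi_\varphi^N$, hence $\Sigma_0\models\bigvee_{n\in\mathbb{N}}\Phi_\varphi^n$, and a second use of Theorem~\ref{approximationsequivalent} yields $\Sigma_0\models\varphi$.

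For $\varphi\in\ulfo$ I would argue by contraposition, assuming $\Sigma_0\not\models\varphi$ for every finite $\Sigma_0\subseteq\Sigma$. Fixing such a $\Sigma_0$, there is a model $\mathfrak{A}_{\Sigma_0}\models\Sigma_0$ in which Eloise has no winning strategy in $\mathcal{G}_\infty(\mathfrak{A}_{\Sigma_0},\varphi)$; by Lemma~\ref{monotonicityinfinity} she then has no winning strategy in $\mathcal{G}_n(\mathfrak{A}_{\Sigma_0},\varphi)$ for any $n$, so by Proposition~\ref{positionalityall} Abelard has a positional non-losing strategy in every $\mathcal{G}_n(\mathfrak{A}_{\Sigma_0},\varphi)$. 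After the same normalising reductions on $\varphi$ as in the proof of Lemma~\ref{compactness}, I would form the first-order theory $\Sigma'$ exactly as there --- with the domain predicate $D$, the relations $N_{(\psi,\#)}$ for $\psi\in\subf(\varphi)$, the discrete order $<$ on $W$ with successor function $f$ and minimum $d$, the sentences $\theta_n^d$, and the axioms for the $N_{(\psi,\#)}$ listed there --- and additionally place into $\Sigma'$, for every $\sigma\in\Sigma$, the relativisation $\sigma^D$ of $\sigma$ to $D$ (with the constants replacing the free variables of $\Sigma$ asserted to lie in $D$). From any model $\mathfrak{A}^+$ of $\Sigma'$ one reads off, as in the proof of Lemma~\ref{compactness}, the submodel induced by $D^{\mathfrak{A}^+}$: it satisfies $\Sigma$ by the relativisation axioms, while the relations $N_{(\psi,\#)}^{\mathfrak{A}^+}$ together with the infinite ascending chain $d,f(d),f(f(d)),\dots$ furnish a non-losing strategy for Abelard in the game $\mathcal{G}_\infty$ for $\varphi$ on it, so Eloise has no winning strategy there, which gives $\Sigma\not\models\varphi$.

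The remaining point is that $\Sigma'$ is satisfiable, and by compactness of $\fo$ it suffices to satisfy each finite $\Sigma'_0\subseteq\Sigma'$. Such a $\Sigma'_0$ involves only the $\theta_n^d$ with $n\le N$ for some $N$ and the relativisations of finitely many $\sigma\in\Sigma$, say those in a finite $\Sigma_0\subseteq\Sigma$; I would then expand $\mathfrak{A}_{\Sigma_0}$ as in the proof of Lemma~\ref{compactness}, interpreting $D$ as its domain, $W$ as a disjoint set of size at least $N+1$ carrying $<$, $f$, $d$, and the $N_{(\psi,\#)}$ according to Abelard's positional non-losing strategy in $\mathcal{G}_N(\mathfrak{A}_{\Sigma_0},\varphi)$. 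This satisfies the axioms for the $N_{(\psi,\#)}$, all $\theta_n^d$ with $n\le N$, and every $\sigma^D$ with $\sigma\in\Sigma_0$ (since $\mathfrak{A}_{\Sigma_0}\models\Sigma_0$ and $D$ denotes its domain), so $\Sigma'_0$ is satisfiable. The main obstacle, as already indicated, is precisely this $\ulfo$ case: since a $\ulfo$-sentence need not be equivalent to the disjunction of its approximants, the passage from finite to full satisfiability must be carried out at the level of a first-order encoding of ``Abelard survives forever'', onto which the relativised premise set is grafted --- it is essential here that $\Sigma$ consists of $\fo$-formulas, so that both the relativisation $\sigma^D$ and the appeal to compactness of $\fo$ make sense.
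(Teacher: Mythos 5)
Your proof is correct, and the $\lfo$ half is essentially the paper's argument (you run it in the direct form via a finite unsatisfiable subset of $\Sigma\cup\{\neg\Phi_\varphi^n\mid n\in\mathbb{N}\}$ plus Lemma~\ref{indexmonotonicity}, the paper in the contrapositive form, but it is the same use of Theorem~\ref{approximationsequivalent} and compactness). The $\ulfo$ half, however, takes a genuinely heavier route than the paper. You rebuild the entire clock apparatus of Lemma~\ref{compactness} --- the order $(W,<)$ with $d$, $f$ and the sentences $\theta_n^d$, the $(k{+}1)$-ary predicates $N_{(\psi,\#)}$, and now additionally the relativisations $\sigma^D$ --- and re-run the finite-satisfiability argument. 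The paper instead invokes Theorem~\ref{safetyexpressible}, which packages ``Eloise has no winning strategy in $\mathcal{G}_\infty(\mathfrak{A},s,\varphi)$'' into a single clockless first-order sentence $\Psi$ over an expanded vocabulary, and then applies compactness directly to $\Sigma\cup\{\Psi\}$: each finite subset is satisfied by expanding the witnessing model $\mathfrak{A}_{\Sigma_0}$. The reason the paper can dispense with the clock is worth noting: the clock in Lemma~\ref{compactness} is needed only because there the hypothesis gives a \emph{different} model $\mathfrak{A}_n$ defeating Eloise in each bounded game $\mathcal{G}_n$, and the clock transfers this graded information through compactness. In the present lemma each $\mathfrak{A}_{\Sigma_0}$ already defeats Eloise in the \emph{unbounded} game, so the plain safety encoding of Theorem~\ref{safetyexpressible} suffices and your $\theta_n^d$-machinery is sound but redundant. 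What your version buys is self-containedness (it does not presuppose Theorem~\ref{safetyexpressible}); what the paper's version buys is brevity and a cleaner separation of concerns. Your observation that the premises must be relativised to $D$ (and that this is where first-orderness of $\Sigma$ is essential) is correct and is the one genuinely new ingredient relative to Lemma~\ref{compactness}; in the paper's formulation the corresponding point is simply that $\Sigma\cup\{\Psi\}$ is a first-order theory over a common vocabulary.
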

\begin{proof}
    We first consider the case where $\varphi$ is in \lfo.
    Suppose that for every finite $\Sigma_0 \subseteq \Sigma$, there exists some $\mathfrak{A}$ and $s$ so that $\mathfrak{A},s \models \Sigma_0$ but $\mathfrak{A},s\not\models \varphi$. By Theorem \ref{approximationsequivalent}, for all 
    such $\mathfrak{A}$ and $s$, we have
    $\mathfrak{A} \models \neg \Phi_\varphi^n$
    for every $n\in \mathbb{N}$. Thus, by compactness of $\fo$, we can deduce that
    $\Sigma \cup \{\neg \Phi_\varphi^n \mid n\in \mathbb{N}\}$
    is satisfiable. This implies, by
    Theorem \ref{approximationsequivalent}, that $\Sigma \not\models \varphi$, contradicting the assumption that $\Sigma\models\varphi$.


We then consider the case where $\varphi$ is in \ulfo.
    Suppose that for every finite $\Sigma_0 \subseteq \Sigma$, there exist $\mathfrak{A}$ and $s$ so that $\mathfrak{A},s \models \Sigma_0$ but $\mathfrak{A},s \not\models \varphi$. Every such model has an
    expansion $\mathfrak{A}'$ to a larger vocabulary so that
    $\mathfrak{A}',s \models \Psi$
    where $\Psi$ is the formula promised by Theorem \ref{safetyexpressible}.
    Thus, by compactness of $\fo$, we see that
$\Sigma \cup \{\Psi\}$
    is satisfiable. But if some $\mathfrak{B}$ and $s$ satisfy this theory, then---due to the properties of the formula $\Psi$---Eloise does not have a winning strategy in the game $\mathcal{G}_{\infty}(\mathfrak{B}',s,\varphi)$ where $\mathfrak{B}'$ is the restriction of $\mathfrak{B}$ to the vocabulary of $\varphi$.
    Thus $\Sigma\not\models\varphi$. 
\end{proof}

\begin{lemma}\label{lemma:fo-deduction-theorem}
    Let $\varphi$ be a formula of $\lfo$ or $\ulfo$. Let $\Sigma$ be a finite set of $\fo$-formulas. Now $\Sigma \vdash \varphi$ if and only if $\vdash (\neg \bigwedge \Sigma) \lor \varphi$.
\end{lemma}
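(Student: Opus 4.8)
The plan is to establish the two implications by different routes: right-to-left is a short syntactic derivation inside the calculus $\mathcal{S}$, while left-to-right is obtained semantically, combining soundness of $\mathcal{S}$, the classical (two-valued) behaviour of first-order formulas, and the completeness-for-validities result Theorem \ref{weakcomplete}.

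For $\vdash(\neg\bigwedge\Sigma)\lor\varphi \Rightarrow \Sigma\vdash\varphi$, write $\Sigma=\{\sigma_1,\dots,\sigma_m\}$. First, $\Sigma\vdash\bigwedge\Sigma$ by iterating $\wedge$Intro on the premises $\sigma_1,\dots,\sigma_m$. Now apply $\vee$Elim to the theorem $(\neg\bigwedge\Sigma)\lor\varphi$: in the branch with discharged assumption $[\neg\bigwedge\Sigma]$ we have both $\bigwedge\Sigma$ (derived from $\Sigma$) and $\neg\bigwedge\Sigma$, hence $\bigwedge\Sigma\wedge\neg\bigwedge\Sigma$ by $\wedge$Intro and then $\bot$ by $\bot$Intro; since the formula $\bot$ is trivially in strong negation normal form and its sole occurrence of $\bot$ is positive and outside the scope of any negation, $\bot$Elim lets us replace that occurrence by $\varphi$, yielding $\varphi$. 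In the branch with discharged assumption $[\varphi]$ the conclusion $\varphi$ is immediate. Hence $\vee$Elim derives $\varphi$ from the premise set $\Sigma$ together with the theorem $(\neg\bigwedge\Sigma)\lor\varphi$, i.e.\ $\Sigma\vdash\varphi$. (If $\Sigma=\varnothing$ the argument is the same with $\bigwedge\Sigma=\top$, using $\vdash\top$, which holds since $\mathcal{S}$ is complete for $\mathrm{FO}$.)

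For $\Sigma\vdash\varphi \Rightarrow \vdash(\neg\bigwedge\Sigma)\lor\varphi$, I would first invoke soundness of $\mathcal{S}$ to obtain $\Sigma\models\varphi$. The crucial intermediate step is the semantic claim that $\Sigma\models\varphi$ implies the validity of $(\neg\bigwedge\Sigma)\lor\varphi$. To prove it, fix any $\mathfrak{A}$ and $s$ suitable for $(\neg\bigwedge\Sigma)\lor\varphi$; note that suitability here coincides with suitability for $\Sigma\models\varphi$, since the vocabulary and free variables of $(\neg\bigwedge\Sigma)\lor\varphi$ are exactly those of $\Sigma$ together with $\varphi$. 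Because $\bigwedge\Sigma$ is a first-order formula, it behaves classically: if $\mathfrak{A},s\models\bigwedge\Sigma$, then $\mathfrak{A},s\models\sigma$ for all $\sigma\in\Sigma$, hence $\mathfrak{A},s\models\varphi$ by $\Sigma\models\varphi$, so Eloise wins the game for the disjunction by moving into the $\varphi$-disjunct; if $\mathfrak{A},s\not\models\bigwedge\Sigma$, then $\mathfrak{A},s\models\neg\bigwedge\Sigma$, so Eloise wins by moving into the left disjunct. Either way $\mathfrak{A},s\models(\neg\bigwedge\Sigma)\lor\varphi$, so this formula is valid, and Theorem \ref{weakcomplete} then gives $\vdash(\neg\bigwedge\Sigma)\lor\varphi$.

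The only genuinely delicate point --- and the one place where it matters that $\Sigma$ is a \emph{finite} set of \emph{first-order} formulas --- is the semantic claim in the second direction: it relies on $\bigwedge\Sigma$ being an honest $\mathrm{FO}$-formula, so that the evaluation game for it is determined and $\neg\bigwedge\Sigma$ is verifiable precisely when $\bigwedge\Sigma$ is not. For an arbitrary $\ulfo$ or $\lfo$ formula in place of $\bigwedge\Sigma$ this dichotomy fails (there are indeterminate formulas), and the argument would collapse. Everything else is routine: the right-to-left derivation uses only the Boolean core of $\mathcal{S}$, and both soundness of $\mathcal{S}$ and Theorem \ref{weakcomplete} are already in hand.
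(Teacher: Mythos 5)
Your proof is correct. The right-to-left direction is essentially identical to the paper's: derive $\bigwedge\Sigma$ from $\Sigma$ by $\wedge$Intro, collapse the $\neg\bigwedge\Sigma$ branch via $\bot$Intro followed by $\bot$Elim applied to the single positive occurrence of $\bot$, and close with $\vee$Elim. The left-to-right direction, however, takes a genuinely different route. The paper stays entirely inside the calculus: from $\Sigma\vdash\varphi$ it gets $\bigwedge\Sigma\vdash\varphi$ by $\wedge$Elim, hence $\bigwedge\Sigma\vdash(\neg\bigwedge\Sigma)\vee\varphi$ and $\neg\bigwedge\Sigma\vdash(\neg\bigwedge\Sigma)\vee\varphi$ by the $\vee$Intro rules, and then uses the derivability of the first-order tautology $(\neg\bigwedge\Sigma)\vee\bigwedge\Sigma$ (available because $\mathcal{S}$ is complete for $\mathrm{FO}$) together with $\vee$Elim. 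You instead make a semantic detour: soundness yields $\Sigma\models\varphi$, the classical determinacy of the $\mathrm{FO}$-formula $\bigwedge\Sigma$ yields validity of $(\neg\bigwedge\Sigma)\vee\varphi$, and Theorem \ref{weakcomplete} converts this back into a derivation. Your argument is not circular, since Theorem \ref{weakcomplete} is established independently of this lemma, and you correctly isolate the one place where finiteness and first-orderness of $\Sigma$ matter (the excluded-middle dichotomy for $\bigwedge\Sigma$); you even handle the empty-$\Sigma$ case, which the paper leaves implicit. The trade-off is that your direction invokes the full completeness-for-validities machinery (approximants, Lemma \ref{thepreviouslemma}, etc.) where the paper needs only the built-in $\mathrm{FO}$-completeness of $\mathcal{S}$ and three Boolean rules, so the paper's version is the more elementary and self-contained of the two; since only the right-to-left direction is actually used in Theorem \ref{focompletness}, either choice serves the paper's purposes equally well.
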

\begin{proof}
    Suppose $\Sigma \vdash \varphi$. Thus $\bigwedge\Sigma\vdash\varphi$ by
    $\wedge$Elim$1$ and $\wedge$Elim$2$. Therefore $\bigwedge\Sigma\vdash (\neg\bigwedge\Sigma) \vee \varphi$ by $\vee$Intro$2$. 
    On the other hand, we have $\neg\bigwedge\Sigma\vdash (\neg\bigwedge\Sigma)
    \vee \varphi$ by $\vee$Intro$1$. As our system is complete for 
    first-order logic, we have $\vdash(\neg\bigwedge\Sigma)
    \vee \bigwedge\Sigma$. Combining this with the above established facts that $\bigwedge\Sigma\vdash (\neg\bigwedge\Sigma) \vee \varphi$ and $\neg\bigwedge\Sigma\vdash (\neg\bigwedge\Sigma)
    \vee \varphi$, we
    conclude by $\vee$Elim that $\vdash (\neg\bigwedge\Sigma)\vee\varphi$.

    Suppose $\vdash (\neg\bigwedge\Sigma) \vee \varphi$.
    We need to show that $\Sigma\vdash\varphi$. 
    We have $\Sigma\vdash \bigwedge\Sigma$ 
    by $\wedge$Intro.
    Thus $\Sigma\cup\{\neg\bigwedge\Sigma\}\vdash\varphi$ due to $\bot$Intro and $\bot$Elim. As
    also $\Sigma \cup\{\varphi\}\vdash\varphi$, we
    have $\Sigma \cup \{(\neg\bigwedge\Sigma)\vee\varphi\}
    \vdash \varphi$ by $\vee$Elim. Thus, as we
    have assumed that $\vdash (\neg\bigwedge\Sigma) \vee \varphi$,
    we have $\Sigma\vdash \varphi$.
\end{proof}

\begin{theorem}\label{focompletness}
    Let $\varphi$ be a formula of $\lfo$ or $\ulfo$. Let $\Sigma$ be a set of $\fo$-formulas. Now $\Sigma \models \varphi$
    iff $\Sigma \vdash \varphi$.
\end{theorem}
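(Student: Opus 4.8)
The plan is to prove the two directions separately. The direction $\Sigma \vdash \varphi \Rightarrow \Sigma \models \varphi$ is just soundness of the deduction system $\mathcal{S}$, which we have already noted holds for both $\ulfo$ and $\lfo$; nothing further is needed there. The work is all in the converse direction.

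So suppose $\Sigma \models \varphi$. By Lemma \ref{compactnessforfoassumptionslfo} there is a finite $\Sigma_0 \subseteq \Sigma$ with $\Sigma_0 \models \varphi$. If $\Sigma_0 = \varnothing$, then $\models \varphi$, so $\vdash \varphi$ by Theorem \ref{weakcomplete}, and hence $\Sigma \vdash \varphi$ since the empty premise set is a subset of $\Sigma$. So we may assume $\Sigma_0 \neq \varnothing$ and put $\sigma := \bigwedge \Sigma_0$, which is a first-order formula.

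The key semantic observation is that $\Sigma_0 \models \varphi$ is equivalent to the validity of $(\neg \sigma) \lor \varphi$. Indeed, for any suitable $\mathfrak{A}$ and $s$, Eloise has a winning strategy in the game for $(\neg \sigma)\lor\varphi$ iff she can choose a disjunct from which she wins, i.e., iff $\mathfrak{A},s \models \neg\sigma$ or $\mathfrak{A},s\models\varphi$; and since $\sigma$ is first-order, $\mathfrak{A},s\models\neg\sigma$ is equivalent to $\mathfrak{A},s\not\models\sigma$, which (again since $\sigma$ is first-order, so $\wedge$ behaves classically) is equivalent to $\mathfrak{A},s$ failing to satisfy some member of $\Sigma_0$. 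Hence $(\neg\sigma)\lor\varphi$ is valid iff every suitable $\mathfrak{A},s$ that satisfies all of $\Sigma_0$ also satisfies $\varphi$, which is precisely $\Sigma_0\models\varphi$. Consequently $(\neg\sigma)\lor\varphi$ is valid, and by Theorem \ref{weakcomplete} we obtain $\vdash (\neg\bigwedge\Sigma_0)\lor\varphi$. Now Lemma \ref{lemma:fo-deduction-theorem}, applicable because $\Sigma_0$ is a finite set of $\fo$-formulas, yields $\Sigma_0 \vdash \varphi$, and therefore $\Sigma \vdash \varphi$, since any derivation from $\Sigma_0$ is also a derivation from the larger set $\Sigma$. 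This completes the argument, and the case of $\lfo$ follows from that of $\ulfo$ via Remark \ref{sameremark} exactly as in Theorem \ref{weakcomplete}.

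Essentially all the heavy lifting has already been packaged into the preceding results: compactness for first-order premise sets (Lemma \ref{compactnessforfoassumptionslfo}), the ``deduction theorem'' (Lemma \ref{lemma:fo-deduction-theorem}), and completeness for validities (Theorem \ref{weakcomplete}). The only point that requires genuine care is the equivalence between $\Sigma_0 \models \varphi$ and the validity of $(\neg\bigwedge\Sigma_0)\lor\varphi$; this relies crucially on $\Sigma_0$ consisting of first-order formulas, so that the game-theoretic negation of $\bigwedge\Sigma_0$ coincides with classical negation. For a non-first-order premise the equivalence would break down precisely because of indeterminate formulas, which is exactly why the theorem is stated only for first-order premise sets.
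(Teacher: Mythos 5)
Your proof is correct and follows essentially the same route as the paper's: soundness for one direction, and for the other the chain Lemma \ref{compactnessforfoassumptionslfo} $\to$ validity of $(\neg\bigwedge\Sigma_0)\vee\varphi$ $\to$ Theorem \ref{weakcomplete} $\to$ Lemma \ref{lemma:fo-deduction-theorem}. The extra care you take in justifying why $\Sigma_0\models\varphi$ is equivalent to the validity of $(\neg\bigwedge\Sigma_0)\vee\varphi$ (using that $\bigwedge\Sigma_0$ is first-order, hence determined) is a detail the paper leaves implicit, and it is correct.
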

\begin{proof}
The right-to-left direction follows from soundness. For the other direction, suppose that $\Sigma \models \varphi$, where $\Sigma$ is a set of $\fo$-sentences and $\varphi$ is a formula of either $\lfo$ or $\ulfo$. By Lemma \ref{compactnessforfoassumptionslfo}, there exists a finite set $\Sigma_0 \subseteq \Sigma$ so that $\Sigma_0 \models \varphi$, i.e., $(\neg \bigwedge \Sigma_0) \vee \varphi$ is valid. By Theorem \ref{weakcomplete}, we
have that $\vdash (\neg \bigwedge \Sigma_0) \vee \varphi$. Using Lemma \ref{lemma:fo-deduction-theorem}, we have  $\Sigma_0 \vdash \varphi$, and
hence $\Sigma \vdash \varphi$. 
\end{proof}

Now note that we can express in $\ulfo$ that a linear order is well-founded, and this can clearly be used to define $(\mathbb{N},+,\times,0,1)$ up to isomorphism with a single sentence of $\ulfo$. Therefore we
cannot upgrade the above theorem so that $\Sigma$ is a set of $\ulfo$-formulas, as the equivalence $\Sigma\models\varphi\Leftrightarrow\Sigma\vdash\varphi$ would imply that true arithmetical $\fo$-sentences would form a recursively enumerable set.

A similar limitation
holds for $\lfo$. To see this, 
let $\varphi_{<}$ be a formula defining that $<$ is a strict discrete linear order with end points, and $S$ is the corresponding successor order. Now, consider the following formula (where we use ``min'' and ``max'' as
constants that indicate the
end points; it is clear that the 
constants can be eliminated in order to 
keep the vocabulary entirely relational): 
\[\varphi_{<} \land (\mathrm{min} = \mathrm{max} \lor \exists x (S(\mathrm{min},x) \land L  (x = \mathrm{max} \lor \exists y (S(x,y) \land \exists x (x = y \land C_L)))).\]
The formula, let us denote it by $\psi$, essentially states that there is a finite path from $\mathrm{min}$ to $\mathrm{max}$, which implies that the domain of the underlying model must be finite (without imposing any finite upper bound on its size). Now, let $R$ denote a fresh binary relation. It is easy to see that, for every $\varphi$ in the first-order language over the vocabulary $\{R\}$, we have that $\psi \models \varphi$ iff $\varphi$ is valid over the class of finite $\{R\}$-models. It follows quite directly from Trakhtenbrot's theorem that validity over finite $\{R\}$-models is $\Pi_1^0$-complete, which implies that the consequences of $\psi$ form a set that is not recursively enumerable.

\section{Some model theory of $\lfo$ and $\ulfo$}

The purpose of this section is to present preliminary results on the model theory of $\lfo$ and $\ulfo$. Given that both of these logics are non-compact, it is unlikely that they have as rich model theory as, say, $\fo$. However, both of these logics can be seen as fragments of infinitary logics, which in turn do admit nice model theories (even though they are also often non-compact). This gives us hope that one could also develop nice model theories for $\lfo$ and $\ulfo$.

\subsection{Löwenheim-Skolem}

We say that a logic $\mathcal{L}$ has \textbf{countable downwards Löwenheim-Skolem property}, if every sentence $\varphi$ of $\mathcal{L}$ has the following property: if $\mathfrak{A} \models \varphi$, then $\mathfrak{A}$ has a countable substructure $\mathfrak{B}$ which is also a model of $\varphi$. As advertised in the introduction, both $\lfo$ and $\ulfo$ have the countable downwards Löwenheim-Skolem propety.

We start by establishing this for $\lfo$, for which it follows almost directly from the fact that $\fo$ has the countable downwards Löwenheim-Skolem property.

\begin{theorem}
    Let $\varphi$ be a sentence of $\lfo$ and suppose that $\mathfrak{A} \models \varphi$. Then there exists a countable substructure $\mathfrak{B}$ of $\mathfrak{A}$ such that $\mathfrak{B} \models \varphi$.
\end{theorem}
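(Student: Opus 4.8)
The plan is to reduce the statement to the classical downwards Löwenheim--Skolem theorem for $\fo$ by passing through the approximants. First I would invoke Theorem \ref{approximationsequivalent}: since $\mathfrak{A}\models\varphi$ (i.e. $\mathfrak{A},\varnothing\models_\omega\varphi$), we have $\mathfrak{A}\models\bigvee_{n\in\mathbb{N}}\Phi_\varphi^n$, and hence $\mathfrak{A}\models\Phi_\varphi^n$ for some fixed $n\in\mathbb{N}$.

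Next, observe that $\Phi_\varphi^n$ is an ordinary $\fo$-sentence over the (finite, relational) vocabulary of $\varphi$. Applying the classical downwards Löwenheim--Skolem theorem to $\mathfrak{A}$ and this sentence, we obtain a countable substructure $\mathfrak{B}\subseteq\mathfrak{A}$ with $\mathfrak{B}\models\Phi_\varphi^n$. (One may in fact take $\mathfrak{B}$ to be a countable elementary substructure, but mere truth-preservation of the single sentence $\Phi_\varphi^n$ is all that is needed.)

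Finally, from $\mathfrak{B}\models\Phi_\varphi^n$ we get $\mathfrak{B}\models\bigvee_{n\in\mathbb{N}}\Phi_\varphi^n$, so by Theorem \ref{approximationsequivalent} applied to $\mathfrak{B}$ we conclude $\mathfrak{B}\models_\omega\varphi$, i.e. $\mathfrak{B}\models\varphi$, as required. There is essentially no obstacle here: the only point that needs a line of care is that the vocabulary of $\varphi$ is countable so that classical Löwenheim--Skolem applies, which is immediate since vocabularies are finite and relational in this paper; the real content is entirely packaged inside Theorem \ref{approximationsequivalent}. (By contrast, the harder work will be the analogous statement for $\ulfo$, where no single approximant suffices and a more delicate argument — e.g. a Skolem-hull construction directly on the semantic game — is needed.)
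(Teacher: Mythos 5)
Your proposal is correct and follows essentially the same route as the paper's own proof: pass to a single approximant $\Phi_\varphi^n$ via Theorem \ref{approximationsequivalent}, apply the classical downwards L\"owenheim--Skolem theorem to that $\fo$-sentence, and transfer back. Your closing remark about $\ulfo$ requiring a different, game-based Skolem-hull argument also matches what the paper does for the unbounded case.
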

\begin{proof}
    Suppose that $\mathfrak{A} \models \varphi$. Theorem \ref{approximationsequivalent} implies that $\mathfrak{A} \models \bigvee_{n \in \mathbb{N}} \Phi_\varphi^n$. Hence $\mathfrak{A} \models \Phi_\varphi^n$, for some $n \in \mathbb{N}$. Since $\Phi_\varphi^n$ is a sentence of $\fo$, we know that there exists a countable substructure $\mathfrak{B}$ of $\mathfrak{A}$ such that $\mathfrak{B} \models \Phi_\varphi^n$. Using theorem \ref{approximationsequivalent} again, we conclude that $\mathfrak{B} \models \varphi$.
\end{proof}

In the case of $\ulfo$ it turns out that we can adapt the standard proof that $\fo$ has the countable downwards Löwenheim-Skolem property.

\begin{theorem}\label{lowenheimskolem}
    Let $\varphi$ be a sentence of $\ulfo$ and suppose that $\mathfrak{A} \models \varphi$. Then there exists a countable substructure $\mathfrak{B}$ of $\mathfrak{A}$ such that $\mathfrak{B} \models \varphi$.
\end{theorem}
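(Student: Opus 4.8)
The plan is to mimic the classical Skolem–hull argument, but phrased game-theoretically. The approximant route used for $\lfo$ is unavailable here: as the well-foundedness example before Lemma \ref{compactness} shows, an $\ulfo$-sentence need not be equivalent to $\bigvee_{n} \Phi_\varphi^n$. So instead I would work directly with a winning strategy of Eloise. Since $\mathfrak{A} \models \varphi$, fix a positional winning strategy $\sigma$ for Eloise in $\mathcal{G}_\infty(\mathfrak{A}, \varphi)$, which exists by Proposition \ref{positionalityall}.

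Starting from any countable nonempty $B_0 \subseteq A$, define $B_{i+1}$ to be $B_i$ together with, for every position $(\psi, s, \#)$ of $\mathcal{G}_\infty(\mathfrak{A},\varphi)$ at which Eloise moves and $s$ maps the variables of $\varphi$ into $B_i$, the element of $A$ that $\sigma$ chooses there. The only positions at which a new element can be introduced are those of the form $(\exists x\, \theta, s, +)$ and $(\forall x\, \theta, s, -)$; the rules for $L\psi$ and $C_L$, as well as Eloise's choices of a conjunct or disjunct, never bring in new elements. Since $\subf(\varphi)$ is finite and $B_i$ is countable, there are only countably many relevant positions, so each $B_{i+1}$ is countable; hence $B := \bigcup_{i} B_i$ is countable. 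Let $\mathfrak{B}$ be the substructure of $\mathfrak{A}$ induced on $B$. Then $B \neq \varnothing$ and $\mathfrak{B}$ interprets the vocabulary of $\varphi$, so $\mathfrak{B}$ is $\varphi$-suitable.

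Next I would check that $\sigma$, viewed as a strategy in $\mathcal{G}_\infty(\mathfrak{B}, \varphi)$, is well-defined and winning. Every position $(\psi, s, \#)$ with $s$ into $B$ is also a position of $\mathcal{G}_\infty(\mathfrak{A}, \varphi)$, and every legal move of $\mathcal{G}_\infty(\mathfrak{B},\varphi)$ (in particular Abelard choosing an element of $B$) is a legal move of $\mathcal{G}_\infty(\mathfrak{A},\varphi)$; thus any play of $\mathcal{G}_\infty(\mathfrak{B},\varphi)$ in which Eloise follows $\sigma$ is also a play of $\mathcal{G}_\infty(\mathfrak{A},\varphi)$ in which she follows $\sigma$. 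By the closure property of $B$, whenever such a play reaches an Eloise position $(\psi, s, \#)$, the assignment $s$ maps into some $B_i$ (the finitely many values each lie in some $B_j$), so $\sigma$'s choice there lies in $B_{i+1} \subseteq B$; hence the move is legal in $\mathcal{G}_\infty(\mathfrak{B},\varphi)$ and every position along the play keeps its assignment inside $B$. So $\sigma$ is a genuine strategy in $\mathcal{G}_\infty(\mathfrak{B},\varphi)$.

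Finally, any play $p$ of $\mathcal{G}_\infty(\mathfrak{B},\varphi)$ following $\sigma$ is, by the above, a play of $\mathcal{G}_\infty(\mathfrak{A},\varphi)$ following $\sigma$; since $\sigma$ is winning for Eloise there, $p$ is finite and ends in a position $(\alpha, s, +)$ with $\mathfrak{A}, s \models \alpha$, or $(\alpha, s, -)$ with $\mathfrak{A}, s \not\models \alpha$, for an $\fo$-atom $\alpha$ and $s$ into $B$; as atomic formulas are absolute between $\mathfrak{A}$ and its induced substructure $\mathfrak{B}$, the same holds with $\mathfrak{B}$ in place of $\mathfrak{A}$, so Eloise wins $p$ in $\mathcal{G}_\infty(\mathfrak{B},\varphi)$ as well. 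Hence $\sigma$ is winning and $\mathfrak{B} \models \varphi$. The one point that genuinely needs care — and the reason this works despite the recursion operator — is that a winning strategy for Eloise never produces an infinite play (she can only win by reaching an $\fo$-atom), so the win transfers downward; the looping rules cause no trouble, since they neither force genuinely infinite plays when Eloise is winning nor introduce new domain elements.
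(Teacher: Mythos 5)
Your proposal is correct and follows essentially the same route as the paper: take a positional winning strategy for Eloise, close a countable set under her choices at quantifier positions, and observe that the restricted strategy remains winning on the induced substructure because every play following it is also a (finite, won) play in the big game and atomic formulas are absolute. The only cosmetic difference is that the paper first rewrites $\forall$ as $\neg\exists\neg$ so that only positions $(\exists x\,\theta,s,+)$ need to be closed under, whereas you handle $(\forall x\,\theta,s,-)$ positions directly.
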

\begin{proof}
To simplify notation, we may assume that $\varphi$ 
    has only quantifiers $\exists$ 
    by writing $\neg\exists \neg$ instead of $\forall$ in the usual way.
    Suppose that $\varphi$ has a model $\mathfrak{A}$ so that Eloise has a winning strategy $\sigma$ in the game $\mathcal{G}_\infty(\mathfrak{A},\varphi)$.
    We may assume the strategy is positional by Lemma \ref{reachabilitygametheorem}. We want to construct a countable model $\mathfrak{B}$ so that Eloise has a winning strategy also in the game $\mathcal{G}_\infty(\mathfrak{B},\varphi)$. 
    
    Pick an arbitrary $b \in A$. We define a sequence of sets $(B_n)_{n\in \mathbb{N}}$ inductively as follows.
    \begin{enumerate}
        \item $B_0 = \{b\}$.
        \item $B_{n+1} = B_n \cup \{d \mid \sigma((\exists x \psi, s, +)) = d \text{, where }\ran(s) \subseteq B_n\}$.
    \end{enumerate}
    Let $\mathfrak{B}$ be the substructure of $\mathfrak{A}$ induced by the set $\bigcup_{n\in \mathbb{N}} B_n$. $\mathfrak{B}$ is clearly countable.
    
    It is easy to see that $\sigma$, or more precisely its restriction to the set of positions occurring in $\mathcal{G}_\infty(\mathfrak{B},\varphi)$, is also a winning strategy for Eloise also in $\mathcal{G}_\infty(\mathfrak{B},\varphi)$. Indeed, as long as Eloise follows it in $\mathcal{G}_\infty(\mathfrak{B},\varphi)$, which is possible by the definition of $\mathfrak{B}$, Eloise will eventually reach---after a finite number of rounds---a winning position.
\end{proof}

\subsection{Craig interpolation property}

A logic $\mathcal{L}$ has the \textbf{Craig interpolation property}, if the following holds for every two sentences $\varphi$ and $\psi$ of $\mathcal{L}$: if $\varphi \models \psi$, then there exists a third sentence $\theta \in \mathcal{L}$ called an \textbf{interpolant}, such that $\varphi \models \theta \models \psi$ and $\theta$ contains only those relation symbols that occur in both of the sentences $\varphi$ and $\psi$. We will next establish that neither $\lfo$ nor $\ulfo$ has the Craig interpolation property. These results should be contrasted with the fact that several infinitary logics, such as $\mathcal{L}_{\omega_1 \omega}$, do enjoy the Craig interpolation property.

We start by establishing that the class of finite structures of even size is not definable in neither $\lfo$ nor in $\ulfo$.

\begin{proposition}\label{prop:finite-even-not-definable}
   For every sentence $\varphi$ of either $\lfo$ or $\ulfo$ there exists a finite structure $\mathfrak{A}$ of even size and a finite structure $\mathfrak{B}$ of odd size such that
   \[\mathfrak{A} \models \varphi \Rightarrow \mathfrak{B} \models \varphi.\]
\end{proposition}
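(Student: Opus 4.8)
The plan is to prove the stronger statement that, over the empty vocabulary, no sentence of $\lfo$ or $\ulfo$ with at most $k$ variables can separate two finite structures that both have at least $k$ elements. Since relational atoms are false in every structure interpreting all relation symbols as the empty relation, we may replace each relational atom of $\varphi$ by $\bot$ and thereby assume $\varphi$ is a sentence over the empty vocabulary; let $k$ be the number of variables (free and bound) occurring in $\varphi$. Take $\mathfrak{A}$ to be the $(2k+2)$-element structure and $\mathfrak{B}$ the $(2k+3)$-element structure over the empty vocabulary, so both have size $\ge k$. It then suffices to show $\mathfrak{A}\models\varphi \Rightarrow \mathfrak{B}\models\varphi$.

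For the case $\varphi\in\lfo$, I would use approximants. If $\mathfrak{A}\models\varphi$, then by Theorem~\ref{approximationsequivalent} we have $\mathfrak{A}\models\Phi_\varphi^n$ for some $n$. The unfolding $\Psi_\varphi^n$ arises from (a regularisation of) $\varphi$ by repeatedly substituting reference formulas for looping atoms, which never introduces a fresh variable, so $\Phi_\varphi^n$ is a first-order formula all of whose variables already occur in $\varphi$; thus $\Phi_\varphi^n\in\fo^k$. A standard $k$-pebble Ehrenfeucht--Fra\"{i}ss\'{e} argument shows that over the empty vocabulary all structures of size at least $k$ satisfy the same $\fo^k$-sentences, so $\mathfrak{B}\models\Phi_\varphi^n$, and Theorem~\ref{approximationsequivalent} applied once more yields $\mathfrak{B}\models\varphi$.

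For the case $\varphi\in\ulfo$, the detour through approximants is not available (recall the well-foundedness example), so instead I would transfer Eloise's strategy directly, using the assignment-similarity idea from the proof of Theorem~\ref{theorem:ulfo_contained_in_uso}. Assume Eloise has a winning strategy $\sigma$ in $\mathcal{G}_\infty(\mathfrak{A},\varphi)$. She plays $\mathcal{G}_\infty(\mathfrak{B},\varphi)$ while maintaining a shadow play of $\mathcal{G}_\infty(\mathfrak{A},\varphi)$ in which she follows $\sigma$, keeping the invariant that the current positions $(\psi,s',\#)$ and $(\psi,s,\#)$ of the two games are assignment-similar, i.e. $s$ and $s'$ induce the same equality type on the variables of $\varphi$. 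Boolean moves, negation, and the jump from a looping atom to its reference formula preserve the invariant automatically; for an existential move Eloise reads off a witness $a\in A$ from $\sigma$ and answers in $\mathfrak{B}$ with some $b\in B$ that keeps the assignments similar, which exists because at most $k-1$ elements of $B$ are excluded and $|B|\ge k$; for a universal move she carries Abelard's choice $b\in B$ back to a suitable $a\in A$, using $|A|\ge k$. Because assignment-similar positions agree on all $\fo$-atoms (here just equalities, the relational atoms having been removed), every play thus generated in $\mathcal{G}_\infty(\mathfrak{B},\varphi)$ ends in a position won by Eloise, so $\mathfrak{B}\models\varphi$. The same simulation in fact also handles the $\lfo$-case, carrying the clock value along unchanged.

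The one point requiring care is the bookkeeping in this simulation: one must verify that a legal witness $b$ (respectively $a$) really is available at every quantifier move, and this is precisely where the bound that both structures have at least $k$ elements is used; note also that the argument is symmetric in $\mathfrak{A}$ and $\mathfrak{B}$, so it is irrelevant which one is larger. All remaining details are routine.
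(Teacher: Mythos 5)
Your proof is correct. The $\lfo$ half is essentially the paper's argument: reduce to an approximant $\Phi_\varphi^n\in\fo^k$ via Theorem~\ref{approximationsequivalent} and invoke $k$-pebble games over the empty vocabulary (the paper packages this as indistinguishability in $\mathcal{L}_{\omega_1\omega}^k$ applied to the disjunction $\bigvee_n\Phi_\varphi^n$, which amounts to the same thing). Where you genuinely diverge is the $\ulfo$ case: the paper disposes of it in one line by asserting that $\lfo$ and $\ulfo$ have the same expressive power over finite models and then only treats $\lfo$, whereas you give a self-contained strategy-transfer argument in $\mathcal{G}_\infty$ using assignment-similarity, in the style of the proof of Theorem~\ref{theorem:ulfo_contained_in_uso}. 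The paper's shortcut is legitimate (on a finite model the unbounded reachability game has a finite arena, so a winning strategy for Eloise wins within a bounded number of rounds and hence in some $\mathcal{G}_n$), but that coincidence is nowhere proved in the paper, so your direct simulation buys independence from an unstated lemma at the cost of some bookkeeping; your preliminary step of replacing relational atoms by $\bot$ also handles non-empty vocabularies more explicitly than the paper, which simply takes $\mathfrak{A}$ and $\mathfrak{B}$ over the empty vocabulary. The quantifier bookkeeping you flag is exactly right: with at most $k$ variables, at most $k-1$ values are pinned when a variable is reassigned, so a similarity-preserving witness exists in any structure of size at least $k$.
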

\begin{proof}
    Since the expressive power of $\lfo$ and $\ulfo$ coincides over finite models, it suffices to consider the case of $\lfo$. Let $\varphi$ be an arbitrary sentence of $\lfo$. Suppose that $\varphi$ contains $k$ distinct variables. By Theorem \ref{approximationsequivalent} we know that $\varphi$ is equivalent with the sentence
    \[\Phi := \bigvee_{n \in \mathbb{N}} \Phi_\varphi^n\]
    of $\mathcal{L}_{\omega_1 \omega}^k$. Consider now the models $\mathfrak{A}$ and $\mathfrak{B}$, where both are models over the empty vocabulary with domains $\{0,\dots,2k\}$ and $\{0,\dots,2k+1\}$ respectively.  Now, it is easy to show using pebble games that these structures can not be distinguished via a sentence of $\mathcal{L}_{\omega_1 \omega}^k$. In particular, if $\mathfrak{A} \models \Phi$, then $\mathfrak{B} \models \Phi$. This in turn entails that if $\mathfrak{A} \models \varphi$, then $\mathfrak{B} \models \varphi$.
\end{proof}

We will next establish the failure results. Our counterexample is inspired by the standard counterexample which shows that over finite models $\fo$ does not enjoy Craig interpolation property. It is not surprising that a similar example could be made to work also in our case, since both $\lfo$ and $\ulfo$ can projectively define the class of finite structures (which $\fo$ can not do, since it has compactness).

\begin{theorem}
    Neither $\lfo$ nor $\ulfo$ has the Craig interpolation property.
\end{theorem}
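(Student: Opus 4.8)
The plan is to adapt the classical example witnessing the failure of Craig interpolation for $\fo$ over finite structures, exploiting the fact — implicit in several arguments above — that $\lfo$ and $\ulfo$ can \emph{projectively} define the class of finite structures. First I would fix two disjoint vocabularies $\tau_1=\{<_1,S_1,E_1,m_1,M_1\}$ and $\tau_2=\{<_2,S_2,E_2,m_2,M_2\}$, where (as the paper remarks elsewhere) the $m_i,M_i$ may be taken as constants and afterwards eliminated. Let $\alpha_1$ be the $\fo$-sentence saying that $<_1$ is a strict discrete linear order with least element $m_1$ and greatest element $M_1$, that $S_1$ is its successor relation, and that $E_1(m_1)$ holds while $E_1$ flips along every $S_1$-edge; and let
\[\mathrm{fin}_1\ :=\ m_1=M_1\ \vee\ \exists x\bigl(S_1(m_1,x)\wedge L\bigl(x=M_1\vee\exists y(S_1(x,y)\wedge\exists x(x=y\wedge C_L))\bigr)\bigr),\]
which is a sentence of $\lfo$, and also of $\ulfo$ under unbounded semantics with the same meaning, asserting in any model of $\alpha_1$ precisely that the domain is finite. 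I would then set
\[\varphi\ :=\ \alpha_1\wedge\mathrm{fin}_1\wedge\neg E_1(M_1),\qquad \varphi_{\mathrm{odd}}\ :=\ \alpha_2\wedge\mathrm{fin}_2\wedge E_2(M_2),\qquad \psi\ :=\ \neg\varphi_{\mathrm{odd}},\]
with $\alpha_2,\mathrm{fin}_2$ the $\tau_2$-analogues, so that $\mathfrak{A}\models\varphi$ iff $\mathfrak{A}\upharpoonright\tau_1$ is a finite discrete linear order with endpoints of \emph{even} size (with $E_1$ the alternation witnessing the parity), and $\mathfrak{A}\models\varphi_{\mathrm{odd}}$ iff $\mathfrak{A}\upharpoonright\tau_2$ is such a structure of \emph{odd} size.

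The next step is to check $\varphi\models\psi$. If $\mathfrak{A}\models\varphi$ then the domain $A$ is finite and $|A|$ is even. In the game for $\varphi_{\mathrm{odd}}$ on $\mathfrak{A}$, Abelard controls the top conjunction and may descend to any conjunct, and I claim one $\fo$-conjunct is false in $\mathfrak{A}$: if $\alpha_2$ fails, Abelard picks it; otherwise $<_2$ is a discrete linear order on $A$ with endpoints and $E_2$ alternates from $m_2$, so (as $A$ is finite) $M_2$ lies at level $|A|$ and $E_2(M_2)$ holds iff $|A|$ is odd, which it is not, so Abelard picks the conjunct $E_2(M_2)$. Either way Abelard wins an $\fo$-atom and hence the game, giving $\mathfrak{A}\models\neg\varphi_{\mathrm{odd}}=\psi$. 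This argument is the same under the bounded and the unbounded semantics, since Abelard never engages the looping conjunct $\mathrm{fin}_2$.

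Then, assuming for contradiction an interpolant $\theta$ with $\varphi\models\theta\models\psi$ and $\theta$ over $\tau_1\cap\tau_2=\varnothing$ (so $\theta$ is a sentence over the empty vocabulary), I would argue: (i) every finite set $A$ of even size satisfies $\theta$ — expand $A$ to a $(\tau_1\cup\tau_2)$-structure whose $\tau_1$-reduct is the natural discrete order on $A$ with alternating $E_1$ and $\neg E_1(M_1)$ (legitimate since $|A|$ is even) and whose $\tau_2$-part is arbitrary; it models $\varphi$, hence $\theta$, hence $A\models\theta$; and (ii) no finite set $B$ of odd size satisfies $\theta$ — were $B\models\theta$, expand $B$ so that its $\tau_2$-reduct witnesses $\varphi_{\mathrm{odd}}$ (legitimate since $|B|$ is odd); then Eloise wins the $\varphi_{\mathrm{odd}}$-game on this structure, so Abelard does not (Eloise and Abelard cannot both win a reachability game), i.e.\ the structure fails $\psi$ even though its reduct to $\varnothing$ is $B$, which satisfies $\theta$ — contradicting $\theta\models\psi$. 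Finally I would apply Proposition~\ref{prop:finite-even-not-definable} to $\theta$: it yields a finite structure $\mathfrak{A}$ of even size and a finite structure $\mathfrak{B}$ of odd size over the empty vocabulary with $\mathfrak{A}\models\theta\Rightarrow\mathfrak{B}\models\theta$; then (i) gives $\mathfrak{A}\models\theta$, hence $\mathfrak{B}\models\theta$, contradicting (ii). Since every sentence used above has the same meaning in $\lfo$ and $\ulfo$, this refutes Craig interpolation for both logics at once.

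The main obstacle — and the point requiring care throughout — is the non-classical game-theoretic negation: $\psi=\neg\varphi_{\mathrm{odd}}$ is verified in $\mathfrak{A}$ only when Abelard \emph{actually} has a winning strategy in the $\varphi_{\mathrm{odd}}$-game, which is strictly stronger than Eloise merely lacking one, as the game can be indeterminate. This is precisely why $\varphi$ must carry the conjunct $\mathrm{fin}_1$: without it, a model of $\varphi$ could be infinite with its $\tau_2$-part an infinite discrete order with endpoints on which $E_2(M_2)$ holds, and then $\varphi_{\mathrm{odd}}$ has no losing $\fo$-conjunct for Abelard while $\mathrm{fin}_2$ is only a draw, so then $\varphi\models\psi$ would fail; and it is why, in establishing $\varphi\models\psi$, one must make Abelard win through a genuinely false $\fo$-conjunct rather than through $\mathrm{fin}_2$. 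The remaining ingredients — that $\mathrm{fin}_1$ indeed captures finiteness on discrete orders with endpoints in both semantics, and the routine expansion constructions in (i) and (ii) — are straightforward, with Proposition~\ref{prop:finite-even-not-definable} supplying exactly the inexpressibility input the argument needs.
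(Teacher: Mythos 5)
Your proof is correct and follows essentially the same route as the paper: a valid implication between sentences over disjoint vocabularies whose left side forces a finite domain of even size and whose right side excludes finite odd size, refuted via Proposition~\ref{prop:finite-even-not-definable}; the paper merely uses a different parity gadget (equivalence relations whose classes have size two, with one singleton class for the odd case) and keeps its right-hand sentence purely first-order, so the game-theoretic negation needs no special handling there. Your extra care with $\neg\varphi_{\mathrm{odd}}$ --- making Abelard win through a genuinely false $\fo$-conjunct rather than through the looping conjunct $\mathrm{fin}_2$ --- is exactly the right way to compensate for having placed a self-referential formula inside the negated sentence.
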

\begin{proof}
    Recall the sentence $\psi$ that we introduced in the end of Section \ref{section:axiomatization}:
    \[\varphi_{<} \land (\mathrm{min} = \mathrm{max} \lor \exists x (S(\mathrm{min},x) \land L  (x = \mathrm{max} \lor \exists y (S(x,y) \land \exists x (x = y \land C_L)))).\]
    The main properties of this sentence were the following.
    \begin{enumerate}
        \item If $\mathfrak{A} \models \psi$, then $A$ is finite.
        \item If $\mathfrak{A}$ is a finite structure over a vocabulary which is disjoint from that of $\psi$, then it has an extension $\hat{\mathfrak{A}}$ such that $\hat{\mathfrak{A}} \models \psi$.
    \end{enumerate}
    Both of these properties hold regardless of whether we are using bounded or unbounded semantics.
    
    Now consider the sentences
    \[\chi_1 := \theta_1 \land \forall x \exists y (x \neq y \ \land \ E_1(x,y) \ \land \ \forall z (E_1(x,z) \to (x = z \ \lor \ y = z)))\]
    and
    \[\chi_2 := \theta_2 \ \land \ \exists x (\forall y (x \neq y \ \to \ \neg E_2(x,y))\]
    \[\land \ \forall y (y \neq x \to \exists z (y \neq z \ \land \ E_1(y,z) \ \land \ \forall w (E_1(y,w) \to (y = w \ \lor \ z = w)))))\]
    where $\theta_i$, for $i \in \{1,2\}$, expresses that $E_i$ is an equivalence relation. Note that the common vocabulary of $\chi_1$ and $\chi_2$ is the empty vocabulary. It is easy to see that $\chi_1$ expresses that $E_1$ is an equivalence relation where each equivalence class contains precisely two elements, while $\chi_2$ is expressing that $E_2$ is an equivalence relation where there exists one equivalence class with one element while every other equivalence class has precisely two elements.
    
    Clearly $\psi \land \chi_1 \models \neg \chi_2$, since $\mathfrak{A} \models \psi \land \chi_1$ entails that $|A|$ is finite and even, while $\mathfrak{A} \models \chi_2$ would entail that $|A|$ is either infinite or even. We now claim that there exists no interpolant between $\psi \land \chi_1$ and $\chi_2$ either in $\lfo$ or in $\ulfo$. 
    
    Aiming for a contradiction, suppose that $\theta$ is a sentence of either $\lfo$ or $\ulfo$ over the empty vocabulary which is an interpolant between $\psi \land \chi_1$ and $\chi_2$. Let $\mathfrak{A}$ and $\mathfrak{B}$ be the structures promised by Proposition \ref{prop:finite-even-not-definable}. $\mathfrak{A}$ clearly has an extension $\hat{\mathfrak{A}}$ such that $\hat{\mathfrak{A}} \models \psi \land \chi_1$. Since $\theta$ was interpolant, we have that $\hat{\mathfrak{A}} \models \theta$, which implies that $\mathfrak{A} \models \theta$, since $\theta$ was a sentence over the empty vocabulary. Thus $\mathfrak{B} \models \theta$. Now, $\mathfrak{B}$ clearly has an extension $\hat{\mathfrak{B}}$ for which $\hat{\mathfrak{B}} \models \chi_2$, since $B$ had odd size. But now also $\hat{\mathfrak{B}} \models \theta$, which is a contradiction, since $\theta \models \neg \chi_2$.
\end{proof}

\subsection{Sentences that are determined everywhere}

We have seen several examples of sentences of $\lfo$ and $\ulfo$ which can define properties of classes of models which are not definable by any sentence of $\fo$. In each case one can make the observation that the relevant sentence has a model in which it is non-determined, i.e., neither player has a winning strategy. For instance, the $\ulfo$ sentence which defined the class of well-founded linear orders is non-determined in any model which contains an infinite descending sequence.

This raises the following question: if a sentence of $\lfo$ or $\ulfo$ defines a class of models which is not definable by any $\fo$-sentence, must it be non-determined in some model? The answer turns out to be positive in both cases; if a sentence of either $\lfo$ or $\ulfo$ is determined everywhere, then it is in fact (strongly) equivalent to one of its approximants.

\begin{theorem}\label{thm:determined_everywhere}
    Suppose that $\varphi$ is a sentence of either $\lfo$ or $\ulfo$, which is determined everywhere. Then $\varphi$ is equivalent to a sentence of $\fo$ and more specifically it is equivalent with its $n$th approximant, for some $n$.
\end{theorem}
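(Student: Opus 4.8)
The plan is to show that, for each of the two semantics, the hypothesis forces $\models \Phi_\varphi^n \vee \Phi_{\neg\varphi}^n$ for some $n \in \mathbb{N}$, where $\Phi_{\neg\varphi}^n$ denotes the $n$th approximant of the sentence $\neg\varphi$ (note that this is \emph{not} $\neg\Phi_\varphi^n$ in general, since in forming an approximant a positive looping atom is replaced by $\bot$ while a negative one is replaced by $\top$). From this the theorem follows quickly. To begin, recall from the introduction that $\varphi$ is determined everywhere iff $\varphi \vee \neg\varphi$ is valid, and that by Remark \ref{sameremark} this is independent of whether $\vee$ and $\neg$ are read via bounded or unbounded semantics: in the semantic game for $\varphi \vee \neg\varphi$ on $\mathfrak{A}$ Eloise moves first and selects either the game for $\varphi$ or the game for $\neg\varphi$, and---as is standard for game-theoretic negation---Eloise winning the game for $\neg\varphi$ is the same as Abelard winning the game for $\varphi$; hence Eloise wins the game for $\varphi \vee \neg\varphi$ on $\mathfrak{A}$ exactly when the game for $\varphi$ on $\mathfrak{A}$ is determined.

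Next I would derive $\models \Phi_\varphi^n \vee \Phi_{\neg\varphi}^n$. When $\varphi \in \lfo$ this is cleanest directly: by Theorem \ref{approximationsequivalent}, $\mathfrak{A} \models_\omega \psi$ iff $\mathfrak{A} \models \Phi_\psi^m$ for some $m$, for any sentence $\psi$; so determinacy everywhere says the first-order theory $\{\neg\Phi_\varphi^m \mid m \in \mathbb{N}\} \cup \{\neg\Phi_{\neg\varphi}^m \mid m \in \mathbb{N}\}$ is unsatisfiable, and by compactness of $\mathrm{FO}$ together with Lemma \ref{indexmonotonicity} (which makes $\neg\Phi_\psi^m$ logically stronger as $m$ increases) already $\{\neg\Phi_\varphi^n, \neg\Phi_{\neg\varphi}^n\}$ is unsatisfiable for a single $n$, i.e.\ $\models \Phi_\varphi^n \vee \Phi_{\neg\varphi}^n$. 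When $\varphi \in \ulfo$ I would instead apply Theorem \ref{thm:ulfo_validity} to the valid sentence $\varphi \vee \neg\varphi$, obtaining that $\Phi_{\varphi \vee \neg\varphi}^n$ is valid for some $n$; unwinding the definition of the approximant through the (regularising) unfolding, and using that $\vee$ and $\neg$ are transparent to unfolding, one checks that $\Phi_{\varphi \vee \neg\varphi}^n \equiv \Phi_\varphi^n \vee \Phi_{\neg\varphi}^n$, so again $\models \Phi_\varphi^n \vee \Phi_{\neg\varphi}^n$.

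Fix such an $n$; I claim that $\varphi$ is equivalent to $\Phi_\varphi^n$ in the relevant semantics. One inclusion is general: if $\mathfrak{A} \models \Phi_\varphi^n$ then Eloise wins $\mathcal{G}_n(\mathfrak{A},\varphi)$ by Lemma \ref{approximationswork}, hence she wins $\mathcal{G}_\omega(\mathfrak{A},\varphi)$ by Lemma \ref{existentialgames}, resp.\ $\mathcal{G}_\infty(\mathfrak{A},\varphi)$ by Lemma \ref{monotonicityinfinity}, so $\mathfrak{A} \models \varphi$; applying this with $\neg\varphi$ in place of $\varphi$ shows that $\mathfrak{A} \models \Phi_{\neg\varphi}^n$ implies $\mathfrak{A} \models \neg\varphi$. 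For the converse inclusion, suppose $\mathfrak{A} \models \varphi$. The semantic games are reachability games, so the two players cannot both have winning strategies; hence $\mathfrak{A} \not\models \neg\varphi$, and by the previous sentence $\mathfrak{A} \not\models \Phi_{\neg\varphi}^n$. Since $\models \Phi_\varphi^n \vee \Phi_{\neg\varphi}^n$, it follows that $\mathfrak{A} \models \Phi_\varphi^n$. This proves the weak equivalence of $\varphi$ and $\Phi_\varphi^n$, and since $\Phi_\varphi^n$ is a sentence of $\mathrm{FO}$ the first assertion of the theorem follows. Strong equivalence is then immediate: Abelard wins the semantic game for $\varphi$ on $\mathfrak{A}$ iff $\mathfrak{A} \models \neg\varphi$, iff---using determinacy everywhere together with the weak equivalence $\varphi \equiv \Phi_\varphi^n$---$\mathfrak{A} \not\models \Phi_\varphi^n$, iff Abelard wins the evaluation game $\mathcal{G}(\mathfrak{A}, \Phi_\varphi^n)$ (as $\mathrm{FO}$ is determined).

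I expect the main obstacle to be the bookkeeping hidden in the $\ulfo$ case of the second step, namely checking that $\Phi_{\varphi \vee \neg\varphi}^n \equiv \Phi_\varphi^n \vee \Phi_{\neg\varphi}^n$: one has to verify that unfolding commutes with $\vee$ and $\neg$, that the systematically chosen regularisation of $\varphi \vee \neg\varphi$ (which pulls apart the label symbols of the two copies of $\varphi$) does not affect the approximant, and that the polarity-dependent replacement of looping atoms by $\bot$ or $\top$ is tracked correctly through the outer negation. Everything else is a routine assembly of previously established lemmas. For $\lfo$ this step is bypassed by the compactness argument given above; for $\ulfo$ one could alternatively bypass it by strengthening Lemma \ref{compactness} so that, from models in which for every $n$ neither Eloise nor Abelard wins $\mathcal{G}_n$, it produces a single model in which neither player wins $\mathcal{G}_\infty$, but the route through Theorem \ref{thm:ulfo_validity} appears shortest.
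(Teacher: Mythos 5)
Your proof is correct and follows essentially the same route as the paper: determinacy everywhere gives validity of $\varphi\vee\neg\varphi$, hence validity of $\Phi_\varphi^n\vee\Phi_{\neg\varphi}^n$ for some $n$, which combined with $\Phi_\psi^n\models\psi$ (applied to both $\varphi$ and $\neg\varphi$) yields the equivalence $\varphi\equiv\Phi_\varphi^n$. The only cosmetic differences are that in the $\lfo$ case you inline the compactness argument behind Theorem \ref{LFOVALIDITY} rather than citing it, and that you explicitly flag and discharge the identity $\Phi_{\varphi\vee\neg\varphi}^n\equiv\Phi_\varphi^n\vee\Phi_{\neg\varphi}^n$ and the strong-equivalence claim, both of which the paper asserts without comment.
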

\begin{proof}
    We will first consider the case where $\varphi$ is a sentence of $\lfo$. Since $\varphi$ is determined everywhere, the sentence $\varphi \lor \neg \varphi$ is valid, which implies --- together with Theorem \ref{LFOVALIDITY} --- that $\Phi_{\varphi \lor \neg \varphi}^n$ is valid. Note that $\Phi_{\varphi \lor \neg \varphi}^n$ is the same as $\Phi_\varphi^n \lor \Phi_{\neg \varphi}^n$. Now, we claim that $\Phi_\varphi^n$ is in fact equivalent with $\varphi$. Recall that Theorem \ref{approximationsequivalent} implies that $\Phi_\psi^n \models \psi$, for every sentence $\psi$. Thus in particular $\Phi_\varphi^n \models \varphi$. Concerning the other direction $\varphi \models \Phi_\varphi^n$, we note that since $\Phi_{\varphi}^n \lor \Phi_{\neg \varphi}^n$ is valid, $\neg \Phi_\varphi^n \models \Phi_{\neg \varphi}^n \models \neg \varphi$. Hence $\varphi$ is equivalent with $\Phi_\varphi^n$.
    
    The case where $\varphi$ is a sentence of $\ulfo$ can be handled analogously. The main differences are that instead of Theorem \ref{LFOVALIDITY} we use Theorem \ref{thm:ulfo_validity}, and the fact that $\Phi_\psi^n \models \psi$ holds for every sentence $\psi$ of $\ulfo$ follows from lemmas \ref{monotonicityinfinity} and \ref{approximationswork}, instead of Theorem \ref{approximationsequivalent}.
\end{proof}

We now make two remarks concerning the question of to what extend our result can be made effective. We start by determining the exact complexity of the problem of determining whether a given sentence of $\lfo$ or $\ulfo$ is determined everywhere.

\begin{proposition}\label{prop:complexity-determined-everywhere}
   The problem of determining whether a given sentence $\varphi$ of either $\lfo$ or $\ulfo$ is determined everywhere is $\Sigma_1^0$-complete.
\end{proposition}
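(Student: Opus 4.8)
The plan is to prove the $\Sigma_1^0$ upper bound and the matching $\Sigma_1^0$-hardness separately.

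For membership in $\Sigma_1^0$, recall that a sentence $\varphi$ is determined everywhere precisely when $\varphi\vee\neg\varphi$ is valid: in any suitable model $\mathfrak{A}$, one of the players wins the semantic game for $\varphi$ if and only if Eloise can win the game for $\varphi\vee\neg\varphi$ by choosing the appropriate disjunct (using the fact, noted in the paper for both semantics, that Abelard wins the game for $\varphi$ iff $\mathfrak{A}\models\neg\varphi$). By Theorem~\ref{LFOVALIDITY} in the case of $\lfo$, and by Theorem~\ref{thm:ulfo_validity} in the case of $\ulfo$, the sentence $\varphi\vee\neg\varphi$ is valid if and only if the first-order approximant $\Phi_{\varphi\vee\neg\varphi}^n$ is valid for some $n\in\mathbb{N}$ (this is essentially the argument behind Theorem~\ref{thm:determined_everywhere}). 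Since $\Phi_{\varphi\vee\neg\varphi}^n$ is a first-order sentence computable from $\varphi$ and $n$, and since validity of first-order sentences is recursively enumerable, the predicate ``there exist $n\in\mathbb{N}$ and a first-order derivation of $\Phi_{\varphi\vee\neg\varphi}^n$'' is $\Sigma_1^0$ in $\varphi$. Hence the problem lies in $\Sigma_1^0$.

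For $\Sigma_1^0$-hardness, I would reduce the validity problem of $\fo$ (over a vocabulary with a binary relation), which is well known to be $\Sigma_1^0$-complete. The key device is a sentence that is non-determined in \emph{every} model, and $LC_L$ works: in any suitable $\mathfrak{A}$, and under both the bounded and the unbounded semantics, the unique play of the semantic game for $LC_L$ merely loops through the positions with $C_L$ and $LC_L$ forever (respectively, runs out the clock), so it is won by neither player; thus neither Eloise nor Abelard has a winning strategy. Given a first-order sentence $\theta$, put $\varphi_\theta := \theta\vee LC_L$, which is computable from $\theta$. I claim $\varphi_\theta$ is determined everywhere iff $\theta$ is valid. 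If $\theta$ is valid, then in every suitable model Eloise wins the game for $\varphi_\theta$ by choosing the disjunct $\theta$ and following a winning strategy in the ensuing first-order game, so $\varphi_\theta$ is determined everywhere. Conversely, if $\theta$ is not valid, choose $\mathfrak{A}$ with $\mathfrak{A}\not\models\theta$; in the game for $\varphi_\theta$ over $\mathfrak{A}$, the only moves available to Eloise lead either into the first-order game for $\theta$---which Abelard wins, since $\fo$ is determined and $\mathfrak{A}\models\neg\theta$---or into the looping play of $LC_L$, won by neither player. Hence Eloise has no winning strategy, and Abelard has none either, because Eloise can always divert the play into the $LC_L$ branch, which Abelard cannot win. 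Thus $\varphi_\theta$ is non-determined in $\mathfrak{A}$, proving the claim and hence $\Sigma_1^0$-hardness. Combining this with the upper bound yields $\Sigma_1^0$-completeness.

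The only step that requires real care is the gadget analysis: one must confirm that $LC_L$ is genuinely non-determined in every model under both semantics, and that disjoining it with $\theta$ transfers this behaviour to $\varphi_\theta$ exactly as intended. Everything else is routine bookkeeping on top of Theorems~\ref{LFOVALIDITY} and~\ref{thm:ulfo_validity} and the recursive enumerability of first-order validity.
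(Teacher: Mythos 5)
Your proposal is correct and follows essentially the same route as the paper: the upper bound rests on the recursive enumerability of the validity problem (you unpack it via the approximants of $\varphi\vee\neg\varphi$, the paper enumerates validities directly), and the hardness reduction maps a first-order sentence $\theta$ to $\theta$ disjoined with an everywhere-non-determined gadget. The only cosmetic difference is that the paper uses the free claim symbol $C_L$ (whose play ends immediately with no winner) where you use $LC_L$; both gadgets work, and your analysis of the resulting game is sound.
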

\begin{proof}
    Let $\mathcal{L} \in \{\lfo, \ulfo\}$. We have seen in the previous sections that the set of valid sentences of $\mathcal{L}$ is a recursively enumerable set. This fact already implies that the set of sentences of $\mathcal{L}$ which are determined everywhere is a recursively enumerable set; an effective procedure can simply go through the list of valid sentences of $\mathcal{L}$, and print the sentence $\varphi$ whenever it encounters the sentence $\varphi \lor \neg \varphi$.
    
    For the lower bound we will reduce the validity problem of $\fo$ to the problem of determining whether a sentence of $\mathcal{L}$ is determined everywhere. Let $\varphi \in \fo$ be a sentence. We claim that $\varphi$ is valid iff the sentence
    \[\psi_\varphi := \varphi \lor C_L\]
    is determined everywhere. First, if $\varphi$ is valid, then $\psi_\varphi$ is determined everywhere, because it is a valid sentence. Conversely, if $\psi_\varphi$ is determined everywhere, then $\varphi$ must be valid, since $C_L$ is non-determined in every model.
\end{proof}

An immediate corollary of the above result is that the problem of determining whether a given sentence of $\lfo$ or $\ulfo$ is strongly equivalent to a sentence of $\fo$ is also $\Sigma_1^0$-complete.

\begin{corollary}
    The problem of determining whether a given sentence $\varphi$ of either $\lfo$ or $\ulfo$ is strongly equivalent to a sentence of $\fo$ is $\Sigma_1^0$-complete.
\end{corollary}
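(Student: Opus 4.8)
The plan is to reduce the statement to Proposition \ref{prop:complexity-determined-everywhere} by showing that, for $\mathcal{L} \in \{\lfo, \ulfo\}$, a sentence $\varphi$ of $\mathcal{L}$ is strongly equivalent to a sentence of $\fo$ \emph{if and only if} $\varphi$ is determined everywhere. Granting this equivalence, the class in question is precisely the class of sentences of $\mathcal{L}$ that are determined everywhere, which Proposition \ref{prop:complexity-determined-everywhere} already identifies as $\Sigma_1^0$-complete, so the corollary follows at once.

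For the forward direction, I would suppose $\varphi$ is strongly equivalent to some $\fo$-sentence $\chi$. The evaluation game $\mathcal{G}(\mathfrak{A},\chi)$ is a reachability game, so by Theorem \ref{reachabilitygametheorem} exactly one player has a winning strategy in it; that is, $\chi$ is determined everywhere. Since strong equivalence of $\varphi$ and $\chi$ means that in every suitable $\mathfrak{A}$ the games for $\varphi$ and $\chi$ have the same winner (Eloise, Abelard, or neither), determinacy of $\varphi$ in each model is inherited from that of $\chi$, so $\varphi$ is determined everywhere as well.

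For the converse, suppose $\varphi$ is determined everywhere. Theorem \ref{thm:determined_everywhere} supplies an $n$ with $\varphi$ equivalent to its $n$th approximant $\Phi_\varphi^n$, a sentence of $\fo$. I would then upgrade this weak equivalence to strong equivalence: as in the previous paragraph, $\Phi_\varphi^n$ is determined everywhere, and since both $\varphi$ and $\Phi_\varphi^n$ are determined in every model, ``Abelard has a winning strategy'' is exactly the complement of ``Eloise has a winning strategy'' in each model for both games; hence agreement on Eloise's wins (weak equivalence) automatically forces agreement on Abelard's wins. Thus $\varphi$ is strongly equivalent to $\Phi_\varphi^n \in \fo$, completing the equivalence.

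The argument has no substantial obstacle; the only points needing care are the observation that ``equivalent'' in Theorem \ref{thm:determined_everywhere} can be taken to be \emph{strong} equivalence once one notes that first-order evaluation games are determined (Theorem \ref{reachabilitygametheorem}), and the remark that everything above is uniform in whether $\mathcal{L}$ is $\lfo$ or $\ulfo$, so the $\Sigma_1^0$-completeness transfers verbatim for both.
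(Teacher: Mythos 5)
Your proposal is correct and takes essentially the same route as the paper: reduce to Proposition \ref{prop:complexity-determined-everywhere} via the equivalence ``strongly equivalent to an $\fo$-sentence iff determined everywhere.'' In fact you are slightly more careful than the paper's one-line proof, since you explicitly justify upgrading the weak equivalence from Theorem \ref{thm:determined_everywhere} to strong equivalence using determinacy of first-order evaluation games --- precisely the subtlety the paper only flags in a footnote.
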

\begin{proof}
    Let $\mathcal{L} \in \{\lfo, \ulfo\}$. We have already established that a sentence of $\mathcal{L}$ is strongly equivalent with a sentence of $\fo$ if and only if it is determined everywhere.\footnote{Note that a sentence of $\mathcal{L}$ might be \emph{weakly} equivalent with a sentence of $\fo$ and yet be undetermined in some models. A concrete example of such a sentence is $\exists x \exists y R(x,y) \lor C_L$.} Thus the claim follows from Proposition \ref{prop:complexity-determined-everywhere}.
\end{proof}

To complement these results, we note that if $\varphi$ is determined everywhere, then we can effectively recover a sentence of $\fo$ which is equivalent with $\varphi$. This follows from the observation that if $\varphi$ is equivalent to some $\fo$-sentence $\psi$, then for some $n\in \mathbb{N}$ we have that $(\varphi \leftrightarrow \Phi_\varphi^n)$ is a valid sentence of $\mathcal{L}$, since $\varphi$ was determined everywhere. This allows us, together with the fact that the set of valid sentences of $\mathcal{L}$ is recursively enumerable, recover $\Phi_\varphi^n$ effectively.

Finally we will give an example which demonstrates that Theorem \ref{thm:determined_everywhere} fails if we restrict our attention to the class of finite models. 

\begin{example}
    Consider the following sentence
    \[\varphi := \varphi_{<} \land \exists x \exists y (x = \min \ \land \  y = \max \ \land \ (S(x,y) \ \lor \]
    \[L\exists z \exists w (S(x,z) \ \land \ S(w,y) \ \land \ (S(z,w) \ \lor \exists x \exists y (x = z \ \land \ y = w \ \land \ C_L))))),\]
    where $\varphi_{<}$ expresses that $<$ is a strict linear ordering of the domain, $\min$ and $\max$ are distinct elements that correspond to the smallest and the largest elements of $<$ and $S$ is the successor relation induced by $<$. It is easy to see that, regardless of whether we are using the bounded or unbounded game-theoretical semantics, $\varphi$ defines the class of linear orders of even size. It is well-known that this class is not $\fo$-definable, and hence $\varphi$ is not equivalent to any sentence of $\fo$. 
    
    Next we will show that $\varphi$ is determined everywhere. Suppose that $\mathfrak{A}$ is a suitable model for $\varphi$. If $\mathfrak{A} \not\models \varphi_<$, then Abelard clearly has a winning strategy. Suppose then that $\mathfrak{A} \models \varphi_<$, but $|A|$ is not an even number. To see that Abelard has a winning strategy also in this case, note that $\varphi$ describes a game where Eloise needs to move two pebbles along the successor relation induced by $\varphi_<^\mathfrak{A}$, the initial position of these pebbles being the smallest and the largest elements of $\varphi_<^\mathfrak{A}$. Now, if $|A|$ is not an even number, Eloise will eventually reach a position where there is only a single element between the two pebbles, which is a position that is outside her winning region, because she needs to maintain the condition that the first pebble is always placed on an element which is strictly smaller than the element to which the second pebble is placed. We note that if we are using bounded semantics, then Abelard additionally needs to make sure that the initial clock value for $L$ is large enough.
\end{example}

\section{Definability over natural numbers}

The purpose of this section is to characterise relations over natural numbers that are definable in $\ulfo$ and in $\lfo$ over the standard structure $\mathbb{N}$ of natural numbers. We start by formally defining the classes $\Pi_1^1$ and $\Sigma_{\omega+1}^0$ starting with the former. A relation $X\subseteq \mathbb{N}^k$ is called $\Pi_1^1$ if there exists a formula
\[\forall X_1 \dots \forall X_n \psi(x_1,\dots,x_k)\]
of $\uso$ such that for every $(m_1,\dots,m_k) \in \mathbb{N}^k$ we have that $(m_1,\dots,m_k) \in X$ if and only if $\mathbb{N} \models \psi(m_1,\dots,m_k)$.

To define the class $\Sigma_{\omega+1}^0$, we start by fixing some (reasonable) Gödel numbering $\Godelnum{}$ for the formulas of $\fo$-arithmetic. What we mean by reasonable should become clear in our proofs. Now, consider the set
\[T := \{ \Godelnum{\varphi} \mid \varphi \in \fo \text{ is a sentence in prenex normal form and } \mathbb{N} \models \varphi\}.\]
A relation $X\subseteq \mathbb{N}^k$ is called $\Sigma_{\omega + 1}^0$, if there exists a $T$-computable relation $R$ such that
\[(x_1,\dots,x_k) \in X \Leftrightarrow \exists y_1 \dots \exists y_\ell R(x_1,\dots,x_k,y_1,\dots,y_\ell).\]
Here by $T$-computable, we mean that the relation can be computed by a Turing machine that has a distinct (oracle) tape where the characteristic function $\chi_T$ of $T$ is written down. In other words, the machine has access to a tape that contains the infinite sequence $\chi_T(0),\chi_T(1),\dots$.

We note that an alternative --- and perhaps a more standard --- way of defining the class $\Sigma_{\omega+1}^0$ would be to use the set $\varnothing^{(\omega)}$ instead of $T$ (for a formal definition of $\varnothing^{(\omega)}$, see \cite[p. 257]{Hartley87}). Since the two sets are recursively isomorphic, meaning that there exists a computable bijection $f:\mathbb{N} \to \mathbb{N}$ so that $n \in T$ if and only if $f(n) \in \varnothing^{(\omega)}$, the two definitions of $\Sigma_{\omega + 1}^0$ coincide, see \cite[p. 318]{Hartley87}. However, in our case it is technically more convenient to work with the set $T$.

\subsection{$\ulfo$-definable relations}

By Theorem \ref{theorem:ulfo_contained_in_uso} we know that $\ulfo$ is contained in $\uso$ over general structures, and hence every relation over $\mathbb{N}$ that is definable in $\ulfo$ is also $\Pi_1^1$. To prove the converse direction, we will modify the proof of Kleene's theorem as presented in the book \cite{KOZENBOOK}.

\begin{lemma}\label{lemma:pi11-contained-ulfo}
    Every $\Pi_1^1$-relation over $\mathbb{N}$ is definable in $\ulfo$.
\end{lemma}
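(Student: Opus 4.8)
The goal is to show that every $\Pi_1^1$-relation $X \subseteq \mathbb{N}^k$ is $\ulfo$-definable over $\mathbb{N}$. The strategy follows the standard proof of Kleene's theorem (as in \cite{KOZENBOOK}) that $\Pi_1^1 = $ inductive over $\mathbb{N}$, but replaces the explicit inductive-definition machinery by the looping operator of $\ulfo$. So suppose $X$ is defined by $\forall Y\, \psi(\overline{x})$ with $\psi \in \fo$ (after contracting the second-order quantifier block into a single relation variable $Y$ coding a tuple, which is harmless over $\mathbb{N}$). The key observation driving Kleene's argument is that $\mathbb{N} \models \forall Y\, \psi(\overline{m})$ fails iff there is a \emph{countable} (indeed finite-branching, by König's lemma style reasoning on the Skolem tree) witness for $\exists Y\, \neg\psi(\overline{m})$, and the existence of such a witness can be recast as the \emph{well-foundedness} of a certain tree of finite approximations. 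Concretely, Skolemize $\neg\psi$ and consider the tree whose nodes are finite partial descriptions of a would-be counter-model $Y$; $\forall Y\,\psi(\overline{m})$ holds iff every branch through this tree eventually dies, i.e.\ the tree (ordered by reverse extension) is well-founded.

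\textbf{Key steps.} First I would set up, inside $\fo$-arithmetic, a definable encoding of finite sequences (pairing function, length, projections), which is routine over $\mathbb{N}$. Second, from $\neg\psi(\overline{x})$ I would produce an $\fo$-formula $\mathrm{Bad}(\overline{x}, s)$ expressing ``the finite sequence $s$ (coding a finite partial function) is a consistent partial description of an assignment to $Y$ that does not yet falsify any instantiated clause relevant to $\overline{x}$'' — essentially the nodes of the Skolem/search tree — together with an $\fo$-formula $\mathrm{Ext}(\overline{x}, s, s')$ saying ``$s'$ is an immediate one-step extension of $s$ in this tree.'' The crucial point, exactly as in Kleene's proof, is that $\mathbb{N} \models \forall Y\, \psi(\overline{m})$ iff the subtree rooted at the empty sequence is well-founded, i.e.\ there is no infinite $\mathrm{Ext}$-chain starting from the root. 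Third — and this is where $\ulfo$ enters — I would express well-foundedness of this arithmetically-definable tree by a self-referential sentence of the shape
\[
\forall s\, \bigl(\neg \mathrm{Bad}(\overline{x}, s)\ \lor\ L\,\forall s'\,(\neg \mathrm{Ext}(\overline{x}, s, s')\ \lor\ \exists s(s = s' \land C_L))\bigr),
\]
mimicking the well-foundedness sentence from Proposition~\ref{ulfoexample}: in the unbounded semantics Eloise wins this game exactly when Abelard cannot produce an infinite descending $\mathrm{Ext}$-chain, i.e.\ exactly when the tree is well-founded. Relativizing the outermost $\forall s$ to the root sequence (or just starting the recursion at $s = \langle\rangle$) gives a sentence $\varphi_X(\overline{x})$ with $\mathbb{N} \models \varphi_X(\overline{m}) \iff \overline{m} \in X$.

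\textbf{Expected main obstacle.} The delicate part is not the $\ulfo$ looping trick — that is essentially the well-foundedness idea already used in Proposition~\ref{ulfoexample} — but rather the reduction of an \emph{arbitrary} $\Pi_1^1$ formula to ``well-foundedness of an $\fo$-definable tree over $\mathbb{N}$.'' One must handle the second-order quantifier over \emph{all} subsets of $\mathbb{N}$ (not merely finite or recursive ones), so the tree's branches must be allowed to be genuine infinite binary sequences, and one needs that a counter-model $Y$ exists iff an \emph{infinite branch} exists — this is the content of the arithmetized König/Skolem-tree argument and requires care in choosing the right normal form for $\psi$ (e.g.\ $\Pi^0_1$ matrix after Skolemizing, so that membership of a node in the tree is decidable from a finite amount of $Y$) and in verifying that consistency at every finite level yields, by compactness of $2^{\mathbb{N}}$, a full counter-model. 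Once the tree is correctly set up so that ``$\overline{m} \notin X$'' $\iff$ ``the tree has an infinite branch'' $\iff$ ``Abelard wins the loop game,'' the translation into $\ulfo$ is immediate, and combining this lemma with the reverse inclusion (Corollary~\ref{cor:ulfo_contained_in_uso} restricted to $\mathbb{N}$) yields the announced characterization $\ulfo \equiv \Pi_1^1$ over $\mathbb{N}$.
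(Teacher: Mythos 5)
Your proposal is correct and rests on the same underlying idea as the paper's proof --- the Kleene/Harel--Kozen game in which Abelard reveals a potential counter-witness one finite piece at a time and Eloise wins iff she can always force an exit after finitely many rounds --- but the decomposition is genuinely different. The paper starts from the normal form $\forall f \exists y\, \varphi(\overline{x},y)$ with $\varphi$ quantifier-free and a unary function $f$, and directly writes an $\ulfo$-formula in which Abelard announces $f(0), f(1), \dots$ (coded into a single number via prime powers) while Eloise may at any round exit into $\exists y\, \varphi^*$, where $\varphi^*$ is arranged to be false unless the revealed part of $f$ already determines the truth of $\varphi(x,y)$; essentially all the work goes into arithmetizing this coding and the formula $\varphi^*$. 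You instead factor the argument through the classical normal form ``$\overline{m} \in X$ iff an arithmetically definable Skolem tree of finite approximations is well-founded'' and then reuse the well-foundedness sentence of Proposition \ref{ulfoexample}. Your route is more modular --- the $\ulfo$-specific content reduces to a single already-established example --- at the cost of carrying out the tree normal form, which is standard but not free. One caveat: your appeal to K\"onig's lemma and compactness of $2^{\mathbb{N}}$ is neither needed nor available, since the Skolem tree is $\mathbb{N}$-branching (the Skolem functions take values in $\mathbb{N}$) and K\"onig's lemma fails for such trees; what the argument actually uses is only the direct equivalence ``infinite branch $\leftrightarrow$ total counter-witness,'' which holds because the matrix is arranged to be $\Pi_1^0$ in $Y$, so that local consistency of every initial segment of a branch guarantees that the union of the branch satisfies the matrix. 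With that correction your proof goes through.
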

\begin{proof}
    We start by observing that, over $\mathbb{N}$, it is routine to rewrite an arbitrary $\uso$ formula $\varphi(\overline{x})$ as a formula of the form
    \[\forall f \exists y \varphi(\overline{x},y),\]
    where $f$ is an unary function and $\varphi(\overline{x},y)$ is quantifier-free. Thus we need to show that for each such formula there exists --- over $\mathbb{N}$ --- an equivalent $\ulfo$ formula. For simplicity, we will restrict our attention to the case where $\varphi$ contains a single free variable, i.e., we consider formulas where the quantifier-free part is of the form $\varphi(x,y)$.
    
    The basic idea is now as follows. Given any function $f:\mathbb{N} \to \mathbb{N}$ and natural numbers $m,m' \in \mathbb{N}$, we can determine whether $\varphi(m,m')$ holds in $\mathbb{N}$ by considering $f\upharpoonright n$ -- the restriction of $f$ to $\{0,\dots,n-1\}$ -- for some sufficiently large $n$. Thus the evaluation of $\forall f \exists y \varphi(x,y)$ can be formulated as the following game: Abelard picks natural numbers $f(0),f(1),f(2),\dots$ until Eloise chooses to stop the game and evaluate the formula $\exists y \varphi(m,y)$ with the restriction of $f$ induced by the natural numbers that were chosen by Abelard.
    
    Now we construct an $\ulfo$-formula $\theta(x)$ which essentially describes the above game. To do this, we will first need to fix some effective method of encoding tuples of natural numbers as a single natural number. A standard choice of encoding is 
    \[(n_1,\dots,n_k) \mapsto \prod_{i=1}^k p_i^{n_i + 1},\]
    where $p_i$ denotes the $i$th prime number. Now consider the function $g:\mathbb{N}^2 \to \mathbb{N}$ defined by $(n_1,n_2) \mapsto n_1 \times p^{n_2 + 1}$, where $p$ is the smallest prime number which does not divide $n_1$ (in the case where $n_1 = 1$, we simply set $p$ to be $2$). Since $g$ is effectively computable, there exists a formula $\psi(x,y,z)$ of $\fo$-arithmetic which defines it.
    
    The formula $\theta(x)$ can now be defined as the formula
    \[\exists n \exists f [n = 1 \land f = 1 \land L(\exists y \varphi^*(n,f,x,y) \lor \]
    \[\forall z \exists n' \exists f' (n' = n + 1 \land \psi(f,z,f') \land \exists n \exists f (n = n' \land f = f' \land C_L)))]\]
    where $\varphi^*(n,f,x,y)$ is a formula which will be specified later, but, roughly speaking, it is false if the values chosen by Abelard are not sufficient to determine whether $\varphi(x,y)$ holds and true if $\varphi(x,y)$ is true when evaluated under the mapping determine by the values chosen by Abelard. Based on the above discussion, it should be clear that $\theta(x)$ is the desired formula.
    
    To define the formula $\varphi^*(n,f,x,y)$ we proceed as follows. First, consider the mapping $h:\mathbb{N}^2 \to \mathbb{N}$ defined by
    \[\bigg(\prod_{i=1}^k p_i^{n_i}, j\bigg) \mapsto 
    \begin{cases*}
        n_j - 1 & if  $1\leq j\leq k$ and $n_j\geq 1$  \\
        0 & otherwise
    \end{cases*}\]
    Again, this mapping is clearly computable, and hence there exists a formula $\chi(x,y,z)$ of $\fo$-arithmetic which defines it.
    
    Consider now an arbitrary atomic formula $\alpha(x,y)$ of $\varphi(x,y)$. Our goal is to write, for every such formula $\alpha(x,y)$, a formula $\psi_\alpha(n,f,x,y)$ which essentially verifies that the values chosen by Abelard -- which are encoded in the number $f$ -- are indeed enough to determine whether $\alpha(x,y)$ holds. Having such formulas at hand, we will replace each such atomic formula $\alpha(x,y)$ of $\varphi(n,f,x,y)$ with the corresponding formula $\psi_{\alpha}(n,f,x,y)$. The resulting formula will then the desired formula $\varphi^*(n,f,x,y)$.
    
    We start with a more concrete example. Consider an atomic formula $\alpha(x,y)$ that contains only the terms $x,y,f(x),f(f(x))$. Consider then the formula
    \[(x < n \to \exists (\chi(f,x,z) \land (z < n \to \exists w (\chi(f,z,w) \land \alpha(x,y)[z/f(x),w/f(f(x))]))).\]
    The formula starts by verifying that $f$ encodes the value of $f(x)$, which it does as long as $x < n$, since then $f(x) + 1$ is the exponent of the $x$th prime number that divides $f$. Having verified this, the formula then stores $f(x)$ into the variable $z$. Next, the formula verifies that $f$ also encodes the value $f(f(x)) = f(z)$, and then stores $f(f(x))$ into the variable $w$. Finally, the formula verifies that $\alpha(x,y)[z/f(x),w/f(f(x))]$ holds. Clearly this formula could now be used as the formula $\psi_\alpha(n,f,x,y)$.
    
    In general the atomic formula $\alpha(x,y)$ contains terms from the set \[\{x,f(x),\dots,f^t(x),y,f(y),\dots,f^s(y)\},\]
    which will make the resulting formulas $\psi_\alpha(n,f,x,y)$ more complicated. On the other hand, it is easy to see that the technique that was used in the above example generalizes also to handle this more general case.
\end{proof}

The following is immediate.

\begin{theorem}\label{thm:ulfo_pi11_over_N}
    Over $\mathbb{N}$, $\ulfo$-definable relations and $\Pi_1^1$-relations coincide.
\end{theorem}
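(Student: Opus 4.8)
The plan is to prove the two inclusions separately and then combine them, as both directions are essentially already available. For the easy inclusion---that every $\ulfo$-definable relation over $\mathbb{N}$ is $\Pi_1^1$---I would invoke Corollary~\ref{cor:ulfo_contained_in_uso} (equivalently the construction behind Theorem~\ref{theorem:ulfo_contained_in_uso}): given a formula $\varphi(\overline{x})\in\ulfo$ defining $X\subseteq\mathbb{N}^k$, translate it into an equivalent formula $\Psi(\overline{x})\in\uso$. Since $\Psi$ and $\varphi$ agree in every structure under every assignment, in particular $\mathbb{N}\models\Psi(\overline{m})$ iff $\overline{m}\in X$, and hence $X$ is $\Pi_1^1$ directly by the definition of that class (stated above in terms of $\uso$-definability over $\mathbb{N}$).

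For the converse---that every $\Pi_1^1$-relation over $\mathbb{N}$ is $\ulfo$-definable---I would simply appeal to Lemma~\ref{lemma:pi11-contained-ulfo}, where exactly this is established: an arbitrary $\uso$-formula over $\mathbb{N}$ is first put into the normal form $\forall f\,\exists y\,\varphi(\overline{x},y)$ with $\varphi$ quantifier-free, and the associated Skolem-function game is then simulated by a looping $\ulfo$-formula in which Abelard successively feeds in the values $f(0),f(1),\dots$ (encoded into a single natural number via the arithmetical functions $g$ and $h$ defined there) while Eloise chooses when enough of the restriction of $f$ has been revealed to evaluate the matrix.

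Combining the two inclusions yields the claimed coincidence of the $\ulfo$-definable and the $\Pi_1^1$-relations over $\mathbb{N}$, so the proof of the theorem itself is a one-line composition of results already in hand. Consequently there is no real obstacle left at this stage; the genuine work is concentrated in Lemma~\ref{lemma:pi11-contained-ulfo} (the arithmetical bookkeeping needed to encode partial function values and to translate each atomic subformula $\alpha$ into a formula $\psi_\alpha$ that checks the encoded values suffice to decide $\alpha$) together with the polynomial translation underpinning Corollary~\ref{cor:ulfo_contained_in_uso}.
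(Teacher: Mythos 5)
Your proposal is correct and follows exactly the paper's argument: the paper likewise proves the theorem by combining the translation of $\ulfo$ into $\uso$ (Theorem~\ref{theorem:ulfo_contained_in_uso}, whose relevant content is precisely Corollary~\ref{cor:ulfo_contained_in_uso} that you cite) with Lemma~\ref{lemma:pi11-contained-ulfo}. Your observation that all the substantive work lives in those two prior results, and that the theorem itself is a one-line composition, matches the paper's one-line proof.
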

\begin{proof}
    Combine Theorem \ref{theorem:ulfo_contained_in_uso} with Lemma \ref{lemma:pi11-contained-ulfo}.
\end{proof}

\subsection{$\lfo$-definable relations}

We will next establish that the class of $\lfo$-definable relations and the $\Sigma_{\omega+1}^0$-relations coincide. We will start by establishing that every $\lfo$-definable relation is $\Sigma_{\omega+1}^0$.

\begin{lemma}\label{lemma:lfo-contained-sigmaomegaplusone}
    Every $\lfo$-definable relation over $\mathbb{N}$ is $\Sigma_{\omega+1}^0$.
\end{lemma}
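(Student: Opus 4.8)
The plan is to exploit Theorem~\ref{approximationsequivalent}, which already rewrites every $\lfo$-formula as a countable disjunction of first-order approximants. Fix a formula $\varphi(x_1,\dots,x_k)\in\lfo$ over the vocabulary of $\mathbb{N}$, and let $X\subseteq\mathbb{N}^k$ be the relation it defines over $\mathbb{N}$. By Theorem~\ref{approximationsequivalent},
\[(m_1,\dots,m_k)\in X\quad\Longleftrightarrow\quad \exists n\in\mathbb{N}\ \bigl(\mathbb{N}\models\Phi_\varphi^n(m_1,\dots,m_k)\bigr),\]
where each $\Phi_\varphi^n$ is an $\fo$-formula (a formula of $\fo$-arithmetic, once read over $\mathbb{N}$). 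Hence it suffices to show that the relation holding of $(n,m_1,\dots,m_k)$ exactly when $\mathbb{N}\models\Phi_\varphi^n(m_1,\dots,m_k)$ is $T$-computable; the leading existential quantifier over $n$ then exhibits $X$ as a $\Sigma_{\omega+1}^0$-relation in the sense of the definition above (taking $\ell=1$).

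First I would observe that the map $(\varphi,n)\mapsto\Phi_\varphi^n$ is computable. Indeed, by Definition~\ref{unfoldingdefinition} the unfolding $\Psi_\varphi^n$ is obtained from a fixed systematic regularisation of $\varphi$ by $n$ purely syntactic rounds of replacing looping atoms with their reference formulas, and $\Phi_\varphi^n$ is then obtained from $\Psi_\varphi^n$ by deleting label symbols and replacing each looping atom by $\bot$ or $\top$ according to its polarity; all of this is effective. Next, since $0$, the successor relation, and hence all numerals are uniformly $\fo$-definable over $\mathbb{N}$, there is a computable function sending $(n,m_1,\dots,m_k)$ to the \emph{sentence}
\[\sigma_{n,\overline m}\ :=\ \exists x_1\cdots\exists x_k\Bigl(\textstyle\bigwedge_{i=1}^k\nu_{m_i}(x_i)\ \wedge\ \Phi_\varphi^n(x_1,\dots,x_k)\Bigr),\]
where $\nu_{m}(x)$ is an $\fo$-arithmetic formula defining the element $m$; by construction $\mathbb{N}\models\sigma_{n,\overline m}$ iff $\mathbb{N}\models\Phi_\varphi^n(\overline m)$. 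Composing this with a standard effective procedure that converts $\sigma_{n,\overline m}$ into an equivalent prenex $\fo$-sentence and returns its Gödel number, we obtain a computable function $g$ with $g(n,\overline m)\in T$ iff $\mathbb{N}\models\Phi_\varphi^n(\overline m)$.

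Putting the pieces together: the relation $R$ defined by letting $R(\overline m,n)$ hold iff $g(n,\overline m)\in T$ is $T$-computable (a Turing machine with oracle $T$ simply computes $g(n,\overline m)$ and makes a single oracle query), and we have shown $\overline m\in X$ iff $\exists n\,R(\overline m,n)$. Hence $X$ is $\Sigma_{\omega+1}^0$, as required. I do not expect a genuine obstacle here; the only points demanding care are checking that the approximants are produced uniformly and effectively from $\varphi$ and $n$ (immediate from the explicit definitions in Section~\ref{approximantssection}) and the routine, truth-preserving effective handling of free variables via numerals and of the prenex-normal-form requirement in the definition of $T$.
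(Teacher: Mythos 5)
Your proposal is correct and follows essentially the same route as the paper: apply Theorem \ref{approximationsequivalent} to reduce membership in $X$ to the existence of an $n$ with $\mathbb{N}\models\Phi_\varphi^n(\overline m)$, and then observe that this relation in $(\overline m,n)$ is $T$-computable because the approximants are computable from $\varphi$ and $n$. The paper simply asserts the $T$-computability as ``clear,'' whereas you usefully spell out the routine details (substituting numerals for free variables, prenexing, and querying the oracle on the resulting G\"odel number); no gap either way.
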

\begin{proof}
    Suppose that $X\subseteq \mathbb{N}^k$ is defined by the formula $\varphi(\overline{x}) \in \lfo$. By Theorem \ref{approximationsequivalent}, we have that $\overline{m} \in X$ iff for some $n\in \mathbb{N}$ it is the case that $\mathbb{N} \models \Phi_n^\varphi(\overline{m})$. Let $R(\overline{x},y)$ be a relation which is satisfied by those pairs $(\overline{m},n) \in \mathbb{N}^{k+1}$ for which the $n$th approximant of $\varphi(\overline{x})$ is satisfied by $\overline{m}$ in $\mathbb{N}$. Since $\Phi_y^\varphi(\overline{x})$ is, for every $y \in \mathbb{N}$, a formula of $\fo$-arithmetic which can be computed from $\varphi(\overline{x})$ when given $y$, $R(\overline{x},y)$ is clearly $T$-computable. Hence $\exists y R(\overline{x},y)$ is a $\Sigma_{\omega+1}^0$-definition of $X$.
\end{proof}

To prove the converse direction, we will first show that in a certain technical sense the set $T$ is itself $\lfo$-definable.

\begin{lemma}\label{lemma:empty-jump-expressible}
    There exists a formula $\theta(x) \in \lfo$ so that for every sentence of $\fo$-arithmetic in prenex normal form with quantifier-depth at most $d$ we have that the following two conditions hold.
    \begin{enumerate}
        \item Eloise has a winning strategy in the game $\mathcal{G}_d(\mathbb{N},r,\theta(x))$ if and only if $\mathbb{N} \models \varphi$.
        \item Eloise has a winning strategy in the game
        $\mathcal{G}_d(\mathbb{N},r,\neg \theta(x))$ if and only if $\mathbb{N} \not\models \varphi$.
    \end{enumerate}
    Here $r$ is an assignment for which $r(x) = \Godelnum{\varphi}$.
\end{lemma}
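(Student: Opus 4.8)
The plan is to build $\theta(x)$ as a single $\lfo$-formula that simulates, on the structure $\mathbb{N}$, the evaluation game for an arbitrary prenex sentence of $\fo$-arithmetic whose Gödel number is supplied in $x$. The key idea is that a prenex sentence $\varphi = Q_1 y_1 \dots Q_d y_d\, \beta(y_1,\dots,y_d)$ can be evaluated by a ``universal'' game that reads the quantifier prefix symbol by symbol off the Gödel number: at each stage the formula decodes the next quantifier (is it $\exists$ or $\forall$?), has the appropriate player pick a witness $m_i \in \mathbb{N}$, records the pair (variable index, value) into an auxiliary counter encoded as a single natural number, and then loops back via a claim symbol $C_L$. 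After $d$ rounds the prefix is exhausted and the formula must evaluate the quantifier-free matrix $\beta$ under the recorded assignment. I would use a looping construction of the shape
\[\theta(x) := \exists c\,\bigl(c = \langle\,\rangle \ \wedge\ L\bigl(\mathit{matrix\text{-}done}(x,c)\ \vee\ \delta_\exists(x,c)\ \vee\ \delta_\forall(x,c)\bigr)\bigr)\]
where $\delta_\exists$ handles the case ``the next unread quantifier of $\ulcorner\varphi\urcorner$ is existential'' by letting Eloise pick a value, updating the counter $c$, and jumping back through $C_L$; $\delta_\forall$ does the same with Abelard picking; and $\mathit{matrix\text{-}done}$ fires once $d$ quantifiers have been consumed and then evaluates the decoded matrix. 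All the decoding predicates (``the $i$th quantifier symbol of the formula with Gödel number $x$ is $\exists$'', ``update the assignment code $c$ with value $m$ for variable $i$'', ``the quantifier-free matrix of $x$, evaluated under the assignment coded by $c$, holds'') are arithmetical and hence expressible by $\fo$-formulas over $\mathbb{N}$; this is where ``reasonable Gödel numbering'' is used.

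The crux of the argument is the precise bookkeeping of the clock value. Each time the game passes through $C_L$ it consumes exactly one unit of the clock, and it passes through $C_L$ exactly once per quantifier consumed. Hence, starting the $d$-bounded game $\mathcal{G}_d(\mathbb{N},r,\theta(x))$ with $r(x) = \ulcorner\varphi\urcorner$ and $\varphi$ of quantifier-depth at most $d$, there are exactly enough loop iterations to consume the whole prefix and reach $\mathit{matrix\text{-}done}$, at which point no further jump is needed and the matrix is evaluated directly by an $\fo$-subformula. One then checks, by induction on the length of the remaining quantifier prefix, that Eloise has a winning strategy in the $d$-bounded game on $\theta$ iff $\mathbb{N}\models\varphi$: the inductive step for an existential quantifier says Eloise can pick a good witness iff such a witness exists; for a universal quantifier, that she survives every choice of Abelard iff $\varphi$ holds under every value; and the base case is just correctness of the arithmetical matrix-evaluation predicate. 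One must be mildly careful that the disjunction correctly forces the players into the ``right'' disjunct — e.g., if fewer than $d$ quantifiers remain, $\mathit{matrix\text{-}done}$ is false and Eloise is compelled to continue; if $d$ are done, the $\delta$-disjuncts are false so she is compelled to evaluate the matrix. For part (2), apply the dual reading of the game: $\mathcal{G}_d(\mathbb{N},r,\neg\theta(x))$ is won by Eloise iff Abelard wins $\mathcal{G}_d(\mathbb{N},r,\theta(x))$, and since on these inputs the game is determined (it terminates within $d$ jumps and every leaf is an $\fo$-atom over $\mathbb{N}$, which is decided), Abelard wins iff Eloise does not, i.e.\ iff $\mathbb{N}\not\models\varphi$.

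The main obstacle I expect is not conceptual but a matter of writing the update/decode formulas cleanly enough that the clock accounting is exactly ``one jump per quantifier''. In particular one has to ensure that the auxiliary $\fo$-work done inside $\delta_\exists$, $\delta_\forall$ and $\mathit{matrix\text{-}done}$ introduces no additional passes through $C_L$ (it does not, since those subformulas are plain $\fo$), and that the very first entry into $L(\dots)$ — before any quantifier is read — does not waste a clock tick (it does not: entering $L\psi$ costs nothing; only $C_L$ does). A secondary subtlety is handling sentences of quantifier-depth strictly less than $d$: here the game finishes early and then simply has leftover clock, which is harmless since the matrix evaluation needs no jumps; the induction should therefore be stated for prefixes of length $\le d$ rather than exactly $d$. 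Once these details are in place the lemma follows, and it is exactly the ingredient needed to transfer $T$-computations into $\lfo$ for the converse inclusion $\Sigma_{\omega+1}^0 \subseteq \lfo$-definable.
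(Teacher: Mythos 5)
Your proposal is correct and follows essentially the same route as the paper: a single looping formula that decodes the quantifier prefix from the G\"odel number one symbol per pass through $C_L$, lets the appropriate player choose a witness, accumulates the partial assignment in an arithmetically coded counter, and evaluates the quantifier-free matrix via an arithmetical truth predicate once the prefix is exhausted, with the same clock accounting (one tick per quantifier, so clock $d$ suffices for depth $\le d$) and the same determinacy observation for part (2). The paper's only cosmetic differences are that it carries an explicit countdown counter $n$ and uses guarded implications $(n=0\to\dots)\wedge(n>0\to\dots)$ in place of your guarded disjunction.
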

\begin{proof}
    Consider a sentence $\varphi$ of $\fo$-arithmetic which is in prenex normal form. There clearly exists a computable function which when given as input $\Godelnum{\varphi}$ computes the length of the prefix of $\varphi$. Furthermore, we can effectively determine from $\Godelnum{\varphi}$ whether the $i$th quantifier in the prefix of $\varphi$ is universal. Let $\psi_{\mathrm{prefix}}(x,y)$ denote a formula defining the first function and let $\psi_{\mathrm{universal}}(x,i)$ denote a formula which is true iff the $i$th quantifier in the sentence encoded by $x$ is universal. Let $\psi(x,y,z)$ denote a formula defining the function $g:\mathbb{N} \times \mathbb{N} \to \mathbb{N}$ that we used in the proof of Lemma \ref{lemma:pi11-contained-ulfo}. Consider now the following formula $\theta(x)$ of $\lfo$
    \[\exists d \exists n \exists m [\psi_{\mathrm{prefix}}(x,d) \land n = d \land m = 1 \land L((n = 0 \to \psi(x,m)) \land (n > 0 \to \]
    \[(\psi_{\mathrm{universal}}(x,d-(n-1)) \to \exists n' \forall y \exists m' (n' = n - 1 \land \psi(m,y,m')\]
    \[\land \exists n\exists m (n = n' \land m = m' \land C_L)))\]
    \[\land (\neg \psi_{\mathrm{universal}}(x,d-(n-1)) \to \exists n' \exists y \exists m' (n' = n - 1 \land \psi(m,y,m')\]
    \[\land \exists n \exists m (n = n' \land m = m' \land C_L)))))],\]
    where $\psi(x,m)$ is true iff the quantifier-free part of the sentence encoded by $x$ is true under the assignment encoded by $m$. 
    
    Now, roughly speaking, $\theta(x)$ describes a game where Abelard and Eloise choose interpretations for variables that are being quantified in the formula encoded by $x$. Eloise chooses values for the existentially quantified variables, while Abelard chooses values for the universally quantified variables. After the players have chosen $d$ values, where $d$ is the length of the quantifier prefix of the input sentence, Eloise looses if the resulting assignment $m$ does not satisfy the quantifier-free part of the formula, and otherwise Abelard looses. (Note that there are no plays where neither Eloise nor Abelard wins.) It is straightforward to verify that $\theta(x)$ satisfies both conditions stated in the lemma.
\end{proof}

\begin{lemma}\label{lemma:sigmaomegaplusone-contained-lfo}
    Every $\Sigma_{\omega+1}^0$-relation is $\lfo$-definable.
\end{lemma}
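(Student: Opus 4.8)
The plan is to represent a $\Sigma_{\omega+1}^0$ relation as a $\Sigma_1^0$ predicate \emph{relative to the oracle $T$} and then to simulate the oracle queries inside an $\lfo$-formula using the sentence $\theta(x)$ provided by Lemma~\ref{lemma:empty-jump-expressible}. So suppose $X \subseteq \mathbb{N}^k$ is $\Sigma_{\omega+1}^0$, witnessed by a $T$-computable relation $R$ with $\overline{m} \in X \Leftrightarrow \exists \overline{y}\, R(\overline{m},\overline{y})$, and fix an oracle Turing machine $M$ computing $R$. First I would unfold the oracle computation: $R(\overline{m},\overline{y})$ holds iff there exist (codes of) finite tuples $\overline{q} = (q_1,\dots,q_p)$ of query numbers and $\overline{a} = (a_1,\dots,a_p)\in\{0,1\}^p$ of answer bits, together with a step bound $t$, such that (i) $M$ on input encoding $(\overline{m},\overline{y})$, answering its $i$th query with $a_i$, halts and accepts within $t$ steps making exactly the queries $q_1,\dots,q_p$ in order, and (ii) for every $i\le p$ we have $a_i = 1$ iff $q_i \in T$, i.e.\ iff the prenex sentence $\varphi_{q_i}$ with G\"odel number $q_i$ is true in $\mathbb{N}$. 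Once a reasonable $\fo$-arithmetic coding of tuples is fixed, condition (i) defines a \emph{computable} predicate $\mathrm{Acc}(\overline{x},\overline{y},\overline{q},\overline{a},p,t)$ expressible in $\fo$-arithmetic and mentioning no oracle; condition (ii) is the only part where $T$ enters, and this is exactly what $\theta$ handles.

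Next I would build the $\lfo$-formula $\Theta(\overline{x})$. After an $\fo$-definable preamble collapsing $\overline{x}$ into a single code $u = \langle\overline{x}\rangle$, it consists of an existential block choosing (codes of) $\overline{y},\overline{q},\overline{a},p,t$, then the first-order conjunct $\mathrm{Acc}$, then a loop $L$ whose body is indexed by a counter variable $i$ initialised to $1$ and does the following: if $i>p$ it evaluates $\top$ (an $\fo$-atom Eloise wins); otherwise, extracting $a_i$ and $q_i$ from their codes by $\fo$-arithmetic subformulas, it plays the sentence $\theta(q_i)$ in the case $a_i=1$ and $\neg\theta(q_i)$ in the case $a_i=0$ — using a single syntactic copy of $\theta$ whose label symbol is distinct from $L$ — and afterwards jumps back via $C_L$ with the counter incremented to $i+1$. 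The branch on the bit $a_i$ is written as a conjunction of two implications whose antecedents $a_i=1$ and $a_i=0$ are mutually exclusive, so that the dead branch is vacuously won by Eloise while the live one forces her both to win the $\theta$-subgame and to continue the iteration. By Lemma~\ref{lemma:empty-jump-expressible}, and since the loop inside $\theta$ decrements the clock exactly $d$ times when $\varphi_{q_i}$ has quantifier depth $d$ (so the outcome of the $\theta$-subgame stabilises once at least $d$ clock remains), Eloise wins the $\theta(q_i)$-subgame iff $q_i\in T$ and the remaining clock is at least $d$, and dually she wins the $\neg\theta(q_i)$-subgame iff $q_i\notin T$ and the remaining clock is at least $d$.

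It then remains to check $\mathbb{N},r\models_{\omega}\Theta(\overline{m}) \Leftrightarrow \overline{m}\in X$, where $r$ assigns $\overline{m}$ to $\overline{x}$. For $\Leftarrow$, fix a witness $\overline{y},\overline{q},\overline{a},p,t$; along any single play the clock is consumed at most $p$ times by jumps on $L$, plus at most once through some $\theta(q_i)$-subgame (Abelard commits to a single conjunct, so at most one such subgame is entered), hence at most $p + \max_i\{\text{qdepth}(\varphi_{q_i})\}$ times in all. Choosing a clock $n$ above this bound, Eloise's strategy ``follow the witness, and play $\theta$ / $\neg\theta$ correctly'' wins $\mathcal{G}_n(\mathbb{N},r,\Theta(\overline{m}))$, so $\mathbb{N},r\models_{\omega}\Theta(\overline{m})$ by Lemma~\ref{existentialgames}. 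For $\Rightarrow$, assume $\overline{m}\notin X$ and fix any clock $n$; for every choice of $\overline{y},\overline{q},\overline{a},p,t$ Eloise might make, either $\mathrm{Acc}$ is false and Abelard wins on the first-order conjunct, or $\mathrm{Acc}$ holds but — since $\overline{m}\notin X$ — the answer string is wrong at some least index $i$, and Abelard pushes the play through the (correct) iterations $1,\dots,i-1$, then at iteration $i$ enters the subgame $\theta(q_i)$ (resp.\ $\neg\theta(q_i)$), which Eloise cannot win for \emph{any} remaining clock value: with enough clock the subgame is determined and lost by her, and with too little clock it simply runs out, so in no case is the play an Eloise win. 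Hence Eloise has no winning strategy in $\mathcal{G}_n(\mathbb{N},r,\Theta(\overline{m}))$ for any $n$, i.e.\ $\mathbb{N},r\not\models_{\omega}\Theta(\overline{m})$.

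The main obstacle, and essentially the only non-routine point, is this clock bookkeeping: a single global clock is shared by the iteration loop $L$ and the nested $\theta$-loops, and one must verify simultaneously that it can always be set large enough on the ``yes'' instances (handled via Lemma~\ref{existentialgames}, choosing the clock \emph{after} fixing a witness and noting that only finitely many queries of finite quantifier depth occur) and that it confers no advantage to Eloise on the ``no'' instances. This is precisely why it is essential that Lemma~\ref{lemma:empty-jump-expressible} supplies \emph{both} $\theta$ and $\neg\theta$ with the bounded behaviour described, rather than only deciding membership in $T$ with an unbounded game.
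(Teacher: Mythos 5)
Your proposal is correct and follows essentially the same route as the paper: both arguments exploit the fact that a halting oracle computation consults only finitely many bits of $T$, guess those bits existentially, verify the computation itself by a first-order arithmetic formula, and certify each guessed bit by playing $\theta$ or $\neg\theta$ from Lemma~\ref{lemma:empty-jump-expressible}. The only difference is that you iterate over the queries with an additional $L$-loop, whereas the paper guesses the relevant oracle-tape prefix as a single ``$t$-good'' number and checks it with a bounded first-order quantifier $\forall i\,(1\leq i\leq t\to\cdots)$, which avoids the extra clock bookkeeping your loop requires; your handling of that bookkeeping is nevertheless sound.
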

\begin{proof}
    Suppose that $X\subseteq \mathbb{N}^k$ is $\Sigma_{\omega+1}^0$. Thus there exists a $T$-computable relation $R(\overline{x},y_1,\dots,y_\ell)$ so that
    \begin{equation}\label{eq:def_of_sigma_omega+1}
        \overline{n}\in X \Leftrightarrow \exists y_1 \dots \exists y_\ell R(\overline{n},y_1,\dots,y_\ell).
    \end{equation}
    Suppose that $M(\overline{x},y_1,\dots,y_\ell)$ is a Turing machine which computes $R$ when it has oracle access to the set $T$. Now (\ref{eq:def_of_sigma_omega+1}) can be rewritten as
    \begin{equation}\label{eq:redef_of_sigma_omega+1}
        \overline{n} \in X \Leftrightarrow \exists y_1 \dots \exists y_\ell \exists t \text{ ``$M$ halts on input $(\overline{n},y_1,\dots,y_\ell)$ after $t$ steps"}
    \end{equation}
    Observe that if $M$ halts after $t$ steps, then it could have only accessed the first $t$-bits on the oracle tape. This simple observation will play a crucial role in our proof.
    
    A number $m$ is called $t$-\textbf{good}, if its prime factorization is of the form
    \[\prod_{i=1}^t p_i^{e_i},\]
    where $e_i \in \{1,2\}$, for every $1\leq i\leq t$. In other words, $m$ is $t$-good if it encodes a binary sequence of length $t$. Now the following relation is clearly computable:
    \[\text{``$m$ is $t$-good, for some $t\geq 1$, and $M$ halts after at most $t$-steps, if $(\overline{x},y_1,\dots,y_\ell)$ is}\]
    \[\text{on the input tape and the binary sequence encoded by $m$ is on the oracle tape."}\]
    Let $\varphi_M(m,\overline{x},y_1,\dots,y_\ell)$ denote a formula of $\fo$-arithmetic which defines this relation. Consider now the following formula of $\lfo$
    \[\exists y_1 \dots \exists y_\ell \exists m [\varphi_M(m,\overline{x},y_1,\dots,y_\ell)\]
    \[\land \exists t (\psi_1(m,t) \land \forall i (1\leq i\leq t \to ((\neg \psi_2(m,i) \lor \theta(i)) \land (\psi_2(m,i) \lor \neg \theta(i)))))]\]
    where $\theta$ is the formula given by Lemma \ref{lemma:empty-jump-expressible}, while the formulas $\psi_1$ and $\psi_2$ have the following meaning: $\psi_1(m,t)$ is true iff $m$ is $t$-good; and $\psi_2(m,i)$ is true iff the $i$th bit in the bit sequence encoded by $m$ is one. It is straightforward to verify that this formula defines the relation $X$.
\end{proof}

The following is immediate.

\begin{theorem}\label{thm:sigmaomeplusone_equals_lfo}
    Over $\mathbb{N}$, $\lfo$-definable relations and $\Sigma_{\omega+1}^0$-relations coincide.
\end{theorem}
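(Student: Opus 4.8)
The statement is precisely the conjunction of the two inclusions between classes of relations, so the plan is to establish each direction separately and then combine them; formally these are Lemma~\ref{lemma:lfo-contained-sigmaomegaplusone} and Lemma~\ref{lemma:sigmaomeplusone-contained-lfo}, and the theorem follows at once. I sketch the two arguments and where the real work sits.

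For the inclusion of $\lfo$-definable relations in $\Sigma_{\omega+1}^0$, suppose $X \subseteq \mathbb{N}^k$ is defined over $\mathbb{N}$ by $\varphi(\overline{x}) \in \lfo$. By Theorem~\ref{approximationsequivalent} we have $\overline{m} \in X$ iff $\mathbb{N} \models \Phi_\varphi^n(\overline{m})$ for some $n \in \mathbb{N}$. The point is that $n \mapsto \Phi_\varphi^n$ is an effective map into $\fo$-arithmetic, so the relation $R(\overline{x},y)$ that holds of $(\overline{m},n)$ exactly when $\mathbb{N} \models \Phi_\varphi^n(\overline{m})$ is computable relative to the truth set of $\fo$-arithmetic; and that truth set is in turn computable from $T$, since truth of an arbitrary $\fo$-sentence reduces (by effective prenexation) to truth of a prenex $\fo$-sentence. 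Hence $R$ is $T$-computable and $X = \{\,\overline{m} : \exists y\, R(\overline{m},y)\,\}$ is $\Sigma_{\omega+1}^0$.

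For the converse, given $X$ with $\overline{n} \in X \Leftrightarrow \exists y_1 \cdots \exists y_\ell\, R(\overline{n},\overline{y})$ where $R$ is computed by a Turing machine $M$ with oracle $T$, first rewrite membership using the observation that a halting computation of length $t$ can only have consulted the first $t$ bits of the oracle tape: $\overline{n} \in X$ iff there exist $\overline{y}$, a halting time $t$, and a number $m$ coding a length-$t$ bit string such that (i) the $i$th bit of $m$ equals $\chi_T(i)$ for all $i \le t$, and (ii) $M$ halts on input $(\overline{n},\overline{y})$ within $t$ steps when $m$ is placed on the oracle tape. Condition (ii), together with ``$m$ is $t$-good'', is a plain computable predicate and is therefore defined by an $\fo$-arithmetic formula; condition (i) is where genuine $\lfo$-power is required, via the formula $\theta$ of Lemma~\ref{lemma:empty-jump-expressible}, whose $d$-bounded game decides $i \in T$ for sentences of quantifier depth at most $d$. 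Existentially quantifying $\overline{y}$, $t$ and $m$, and conjoining the arithmetical part with a subformula asserting ``for all $i\le t$, the $i$th bit of $m$ is $1$ iff $\theta(i)$'', yields the desired $\lfo$-definition of $X$. The main obstacle is entirely inside Lemma~\ref{lemma:empty-jump-expressible}: one must simulate the truth evaluation of an arbitrary prenex $\fo$-arithmetic sentence by a loop in which Eloise picks witnesses for existential quantifiers and Abelard for universal ones, reading the quantifier prefix off the G\"odel number and terminating after exactly as many loop iterations as there are quantifiers, so the bounded clock value must be set to the quantifier depth and the loop body must, at iteration count $n$, correctly detect whether the $(d-(n-1))$th prefix quantifier is universal; arranging this bookkeeping and guaranteeing that every play is won by exactly one player (so that both $\theta(i)$ and $\neg\theta(i)$ behave as decision procedures) is the delicate step, after which both directions reduce to routine arithmetization plus appeals to Theorem~\ref{approximationsequivalent}.
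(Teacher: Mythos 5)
Your proposal is correct and takes essentially the same route as the paper: the theorem is obtained by combining the two inclusions, which are exactly Lemmas~\ref{lemma:lfo-contained-sigmaomegaplusone} and \ref{lemma:sigmaomegaplusone-contained-lfo}, and your sketches of both (approximants plus $T$-computability for one direction; the oracle-machine simulation with $t$-good codes and the formula $\theta$ of Lemma~\ref{lemma:empty-jump-expressible} for the other) match the paper's arguments. Your remark that truth of the (not necessarily prenex) approximants reduces effectively to the prenex truth set $T$ is a small bridging observation the paper leaves implicit, but it does not change the argument.
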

\begin{proof}
    Combine Lemma \ref{lemma:sigmaomegaplusone-contained-lfo} with Lemma \ref{lemma:lfo-contained-sigmaomegaplusone}.
\end{proof}

We conclude this section with the observation that it seems likely that one can generalize the proof of Corollary \ref{thm:sigmaomeplusone_equals_lfo} to show that stronger variants of $\lfo$ are able to capture $\Sigma_{\alpha}^0$-relations for every computable ordinal $\alpha$. (We note that in the case $\alpha = \omega$ we have by definition that $\Sigma_\omega^0 = \Delta_\omega^0$, i.e., $\Sigma_\omega^0$ is the class of arithmetical relations which is already captured by $\fo$.) Here by stronger variants we mean variants where the initial value chosen by Eloise is not a natural number, but rather some computable ordinal. For instance, $\Sigma_{\omega+2}^0$-relations should be captured by the variant of $\lfo$ where the players can force the initial clock value to be $\omega$.

\bibliographystyle{plainurl}
\bibliography{kirjasto}

\end{document}